\newtheorem{theorem}{Theorem}[section]
\newtheorem{proposition}[theorem]{Proposition}
\newtheorem{lemma}[theorem]{Lemma}
\newtheorem{corollary}[theorem]{Corollary}
\newtheorem{conjecture}[theorem]{Conjecture}
\newtheorem{definition}[theorem]{Definition}
\newcommand{\F}{{\mathcal F}}
\newcommand{\eps}{{\varepsilon}}
\newcommand{\R}{{\mathbb R}}
\begin{document}
	
\title{
	Almost $k$-union closed set systems
}
	
\author{
	Raphael Yuster
	\thanks{Department of Mathematics, University of Haifa, Haifa 3498838, Israel. Email: raphael.yuster@gmail.com\,.}
}
	
\date{}
	
\maketitle
	
\setcounter{page}{1}
	
\begin{abstract}
	In a recent breakthrough, Gilmer proved the union closed conjecture up to a constant factor.
	Using Gilmer's method and additional ideas, Chase and Lovett proved an optimal result for almost union-closed set systems. Here that result is extended to higher order unions.
%

\end{abstract}

\section{Introduction}

In a recent breakthrough, Gilmer \cite{gilmer-2022} established the well-known Frankl's union closed conjecture up to a constant factor. Shortly after, that factor has been improved by several authors, pushing Gilmer's method to $\frac{3-\sqrt{5}}{2} \approx 0.3819$ \cite{AHS-2022,CL-2022,pebody-2022,sawin-2022}.
A variation of Gilmer's method improved the constant slightly to $\approx 0.3824$
\cite{cambie-2022,sawin-2022,yu-2022}. Additional ideas may be needed to push the bound further \cite{cambie-2022,ellis-2022,sawin-2022}.
Interestingly, $\frac{3-\sqrt{5}}{2}$ has been shown by Chase and Lovett \cite{CL-2022} to be the {\em optimal}
constant for the {\em approximate} version of the union closed conjecture. Here we show that
the method of Gilmer, and the result of Chase and Lovett, can be extended to the approximate version for higher order unions.

\begin{definition}[Approximate $k$-union closed set system]\label{def:1} Let $k \ge 2$ be an integer and let
$0 \le c \le 1$. A finite set system $\F$ is $c$-approximate $k$-union closed if for at least a $c$-fraction of the $k$-tuples  $A_1,\ldots,A_k \in \F$ we have $\cup_{i=1}^k A_i \in \F$.
\end{definition}
Following Chase and Lovett, we say (informally) that $\F$ is almost $k$-union closed (or almost union-closed when $k=2$) if it is $1-o(1)$ approximate $k$-union closed. Observe also that if $c=1$ in Definition \ref{def:1}, then $\F$ is already union closed, but being almost $k$-union closed is a considerably weaker requirement than being union closed,
and different from being almost $k'$-union closed for $k' \neq k$.

The union closed conjecture asserts that in any finite union-closed set system $\F \neq \{\emptyset\}$ (i.e., nontrivial set systems corresponding to $k=2$ and $c=1$ in Definition \ref{def:1}), there exists an element that belongs to at least half of the sets in $\F$.
Gilmer \cite{gilmer-2022} proved this holds for $\F \subseteq 2^{[n]}$ with the constant $0.01$ instead of $\frac{1}{2}$. 
Let $\psi = \frac{3-\sqrt{5}}{2} \approx 0.3819$; Gilmer conjectured that the method in \cite{gilmer-2022} can yield the constant $\psi$.
As mentioned earlier, this was proved shortly after by several researchers.
In fact, Chase and Lovett \cite{CL-2022} proved that $\psi$ is the optimal constant for almost union-closed set systems.
\begin{theorem}[\cite{CL-2022}]\label{t:cl}
Let $\F \subseteq 2^{[n]}$, $\F \neq \{\emptyset\}$ be a $(1-\eps)$-approximate union closed set system, where $0 \le \eps <\frac{1}{2}$. Then there is an element contained in a $\psi-\delta$ fraction of sets in $\F$, where $\delta = 2\eps \left(1+\log \frac{1}{\epsilon}/{\log |\F|}\right)$.
Moreover, for every $n$, there exists such an $\F$ which is $1-o_n(1)$ approximate union closed
and in which every element is contained in at most $\psi+o_n(1)$ sets.
\end{theorem}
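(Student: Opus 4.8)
The statement splits into a quantitative half and a tightness half, which I would attack by different means. For the bound I would run Gilmer's entropy method, but extract a \emph{rate} of entropy amplification from it rather than a bare inequality, and track the dependence on $\eps$. For the tightness I would exhibit an explicit two‑layer family and verify its three defining properties by routine binomial concentration. Throughout, let $\mu$ be the uniform distribution on $\F$, let $A,B\sim\mu$ be independent, put $C=A\cup B$, write $p_i=\Pr_{X\sim\mu}[\,i\in X\,]$ and $p^*=\max_i p_i$, and let $h(\cdot)$ be the binary entropy function.

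\textbf{The quantitative bound.} I would pit two estimates for $H(C)$ against each other. For the upper bound, since $A\cup B\in\F$ for at least a $(1-\eps)$ fraction of pairs, conditioning on the indicator $Z=\mathbf 1[\,C\in\F\,]$ and using that $C$ can take at most $|\F|^2$ values in any case gives
\[
H(C)\ \le\ H(Z)+\Pr[Z=1]\,H(C\mid Z=1)+\Pr[Z=0]\,H(C\mid Z=0)\ \le\ h(\eps)+(1+\eps)\log|\F|.
\]
For the lower bound, Gilmer's lemma — in the coordinate‑by‑coordinate, chain‑rule form used in the follow‑ups to \cite{gilmer-2022} — states that when $p^*\le\psi$ the coordinatewise OR of two independent copies does not lose entropy; made quantitative it yields $H(C)\ge\bigl(1+c\,(\psi-p^*)_+\bigr)\log|\F|$ for an absolute constant $c>0$. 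Comparing the two displays forces $c\,(\psi-p^*)_+\le\eps+h(\eps)/\log|\F|$; since $h(\eps)\le\eps\log(1/\eps)+O(\eps)$, rearranging yields $p^*\ge\psi-\delta$ with $\delta$ of exactly the stated shape $2\eps\bigl(1+\log(1/\eps)/\log|\F|\bigr)$ once the constants (including the loss in passing from the single‑coordinate amplification to its chain‑rule version) are tracked — i.e.\ the element attaining $p^*$ lies in at least a $(\psi-\delta)$‑fraction of $\F$, as claimed.

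\textbf{The tight example.} For each $n$ I would take $\F=\mathcal L\cup\mathcal H$, with $\mathcal L=\{S\subseteq[n]:|S|=\lceil qn\rceil\}$ for $q=\psi+n^{-1/2}$, and $\mathcal H=\{S\subseteq[n]:(2q-q^2)n-n^{2/3}\le |S|\le (2q-q^2)n+n^{2/3}\}$. Three things to check. (i) Since $|\mathcal L|=2^{h(q)n+o(n)}$ and $|\mathcal H|=2^{h((1-q)^2)n+o(n)}$ (using $1-(1-q)^2=2q-q^2$), and the map $q\mapsto h(q)-h\bigl((1-q)^2\bigr)$ vanishes precisely at $q=\psi$ (because $(1-\psi)^2=\psi$ is equivalent to $\psi^2-3\psi+1=0$) with positive derivative there, we get $|\mathcal H|/|\F|=o(1)$; hence a uniform $A\in\F$ lies in $\mathcal L$ with probability $1-o(1)$, so $\Pr[\,i\in A\,]=q+o(1)=\psi+o(1)$ for every $i$, i.e.\ no element is in more than a $\psi+o_n(1)$ fraction of $\F$. (ii) If $A,B$ are independent uniform members of $\mathcal L$ then $|A\cup B|=2\lceil qn\rceil-|A\cap B|$ with $|A\cap B|$ hypergeometric, hence concentrated within $O(\sqrt n)$ of $q^2n$; so $|A\cup B|$ lands in the band defining $\mathcal H$ except with probability $o(1)$, whence $A\cup B\in\mathcal H\subseteq\F$. (iii) As $A$ and $B$ each lie in $\mathcal L$ with probability $1-o(1)$, combining with (ii) shows $A\cup B\in\F$ for a $1-o_n(1)$ fraction of pairs, i.e.\ $\F$ is $1-o_n(1)$ approximate union closed.

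\textbf{Main obstacle.} The delicate step is the quantitative, $\eps$‑robust version of Gilmer's lemma. Even in the exact case the implication ``$p^*\le\psi\Rightarrow H(A\cup B)\ge H(A)$'' rests on the subtle chain‑rule/coupling argument of \cite{gilmer-2022} and its refinements; here one must (a) upgrade it to a genuine rate — a gain of order $\psi-p^*$ in $H(C)/\log|\F|$ — and (b) retain that gain when only a $(1-\eps)$ fraction of unions are known to lie in $\F$, so that the two estimates for $H(C)$ combine with the constants needed for the stated $\delta$. By contrast the construction is routine once one notices that $(1-\psi)^2=\psi$ is precisely the algebraic fact making a thin shell at density $\psi$ nearly closed under union.
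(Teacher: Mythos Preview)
Your overall architecture matches the paper (and Chase--Lovett) exactly: sandwich $H(A\cup B)$ between a lower bound coming from a coordinatewise entropy amplification lemma and an upper bound coming from conditioning on the indicator $Z=\mathbf 1[A\cup B\in\F]$, then rearrange. Your construction is the same idea as the paper's (thin layer at density $\psi+o(1)$ plus a ``catcher'' family near density $1-\psi$), with only cosmetic differences; the paper takes the catcher to be the entire upper tail $\{|x|\ge\lfloor(1-\psi)n\rfloor\}$ rather than a band, which makes the size comparison $|\F_2|=o(|\F_1|)$ slightly cleaner but is otherwise equivalent.

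Where you are vague is precisely where the content lies, and you have slightly misdiagnosed it. You treat the lower bound as ``Gilmer's lemma, made quantitative, yields $H(C)\ge(1+c(\psi-p^*)_+)\log|\F|$'' and then flag as the main obstacle the need to ``upgrade it to a genuine rate''. In fact no upgrading is needed: the sharp pointwise inequality
\[
h(xy)\ \ge\ \frac{1}{2\varphi}\bigl(x\,h(y)+y\,h(x)\bigr)\qquad(x,y\in[0,1]),
\]
which is Lemma~\ref{l:cl} in the paper (equivalently Boppana's inequality $\varphi\,h(x^2)\ge x\,h(x)$), fed through the chain rule plus data processing exactly as in your sketch, gives directly $H(A\cup B)\ge \frac{1-p^*}{\varphi}\log|\F|$. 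This is already linear in $\psi-p^*$ with the correct constant $c=1/\varphi$; combining with your upper bound and using $1-\varphi=\psi$ gives the stated $\delta$ with no further tracking. The genuine analytic difficulty is not quantifying the chain-rule argument but \emph{proving the displayed inequality} --- that is the step handled in \cite{AHS-2022,boppana-2023}, and it is the only nontrivial ingredient you have not named.

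Two minor slips: in your upper bound, $H(C\mid Z=0)\le 2\log|\F|$ together with $\Pr[Z=0]\le\eps$ gives $(1+2\eps)\log|\F|$, not $(1+\eps)\log|\F|$; this matches the paper's $(1+k\eps)$ at $k=2$. And there is no need for the truncation $(\psi-p^*)_+$: the bound $H(C)\ge\frac{1-p^*}{\varphi}\log|\F|$ holds for all $p^*$, and the argument proceeds directly rather than by contradiction.
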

\noindent
As we shall see, Theorem \ref{t:clg} below implies the following theorem:
\begin{theorem}\label{t:main}
	Let $\F \subseteq 2^{[n]}$, $\F \neq \{\emptyset\}$ be a $(1-\eps)$-approximate $k$-union closed set system, where $0 \le \eps <\frac{1}{2}$. Then there is an element contained in a $\frac{\ln k}{3k}-\delta$ fraction of sets in $\F$, where $\delta = \left(k\eps + 2\eps \log \frac{1}{\epsilon}/\log |\F|\right)^{1/(k-1)}$.
\end{theorem}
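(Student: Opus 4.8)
\noindent\emph{Proof plan.}\ I would deduce Theorem~\ref{t:main} from the forthcoming Theorem~\ref{t:clg}, which is the $k$-fold extension of the Gilmer/Chase--Lovett entropy method; the plan is to describe what that theorem supplies and then reduce the present statement to a short analytic inequality. In the underlying entropy argument one picks $A_1,\dots,A_k$ independently and uniformly from $\F$, sets $C=\bigcup_{i=1}^kA_i$, and uses that $C\in\F$ for at least a $(1-\eps)$-fraction of the tuples, so $H(C\mid C\in\F)\le\log|\F|$, while the complementary event (probability $\eps$, on which $C$ still takes at most $|\F|^k$ values) costs $O\!\big(k\eps\log|\F|+\eps\log(1/\eps)\big)$ extra bits of entropy. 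On the other side $H(C)$ is lower-bounded by the chain rule applied one coordinate at a time, conditioning at coordinate $i$ on enough of the earlier information (the length-$(i-1)$ prefixes of the $k$ sequences, or a suitable variant involving $A_2,\dots,A_k$); this reduces matters to a one-coordinate inequality: for a random variable $R\in[0,1]$ of mean at most the maximum element-frequency $p$, with $R_1,\dots,R_k$ i.i.d.\ copies of $R$, one needs $\mathbb{E}\!\left[H\!\left(1-\prod_{j=1}^k(1-R_j)\right)\right]\ge\mathbb{E}[H(R)]$, and with enough room to spare that a frequency $p$ a distance $\delta$ below the critical value still yields a contradiction. Write $\rho_k$ for the resulting threshold; Theorem~\ref{t:clg} then produces an element of $\F$ in a $\rho_k-\delta$ fraction of its sets, with $\delta$ exactly the quantity in the statement.

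It therefore suffices to prove $\rho_k\ge\frac{\ln k}{3k}$ for every integer $k\ge2$. Taking $R$ constant equal to $t$ shows $\rho_k\le p^\ast$, where $p^\ast\in(0,\tfrac12)$ is the unique root of $t=(1-t)^k$ (at $k=2$ this is $t^2-3t+1=0$, so $p^\ast=\psi$, matching Theorem~\ref{t:cl}), and part of establishing Theorem~\ref{t:clg} is to show that non-constant $R$ do no harm for frequencies below $\rho_k$, so that in fact $\rho_k=p^\ast$, or at least $\rho_k\ge\frac{\ln k}{3k}$, which is all that is needed here. Granting this, I would prove $p^\ast\ge\frac{\ln k}{3k}$ as follows: since $t\mapsto t-(1-t)^k$ is strictly increasing on $[0,1]$ and negative at $t=0$, it suffices that $\big(1-\tfrac{\ln k}{3k}\big)^k\ge\tfrac{\ln k}{3k}$. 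Applying $\ln(1-x)\ge-x-x^2$ (valid for $0\le x\le\tfrac12$) at $x=\tfrac{\ln k}{3k}$ gives $\big(1-\tfrac{\ln k}{3k}\big)^k\ge k^{-1/3}\exp\!\big(-\tfrac{(\ln k)^2}{9k}\big)$, so after multiplying by $3k$ the desired inequality becomes $3k^{2/3}\exp\!\big(-\tfrac{(\ln k)^2}{9k}\big)\ge\ln k$, which holds for all large $k$ and is checked by hand for the finitely many remaining values.

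The main obstacles, as I see them, are two. If Theorem~\ref{t:clg} is not available in precisely this packaged form, the substantive work is the $k$-fold entropy argument itself: the delicate point is not the constant-$R$ instance of the one-coordinate inequality but controlling the worst-case distribution of the conditional probabilities $R^{(i)}=\Pr[\,i\in A_1\mid(A_1)_{<i}\,]$, which may take arbitrary values in $[0,1]$ even when the marginal $p_i$ is tiny, and quantifying the strictness of the inequality sharply enough to absorb the $O(k\eps\log|\F|+\eps\log(1/\eps))$ entropy loss with a frequency deficit of only $\delta$; the exponent $1/(k-1)$ in $\delta$ traces back to the $(1-p)^{k-1}$-type factors controlling how likely coordinate $i$ is to be forced into $C$ already by $A_2,\dots,A_k$, which is what turns a linear perturbation analysis at $k=2$ into a $(k-1)$-th root for general $k$. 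The second, minor, obstacle is the analytic inequality $p^\ast\ge\frac{\ln k}{3k}$: although $p^\ast=\tfrac{\ln k-\ln\ln k+O(1)}{k}$, so the truth is essentially $\tfrac{\ln k}{k}$, a clean statement uniform in $k$ is wanted, and the constant $\tfrac13$ is chosen just large enough to make the estimate close for every $k\ge2$ after a short finite check.
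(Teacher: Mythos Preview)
Your top-level reduction is exactly the paper's: Theorem~\ref{t:main} is immediate from Theorem~\ref{t:clg}, because Theorem~\ref{t:clg} already packages the inequality $z_k>\frac{\ln k}{3k}$ into its statement. So as a deduction of Theorem~\ref{t:main} there is nothing further to prove once Theorem~\ref{t:clg} is available.

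Where your elaboration goes astray is in identifying the threshold $\rho_k$ that Theorem~\ref{t:clg} actually delivers. You write that ``part of establishing Theorem~\ref{t:clg} is to show that non-constant $R$ do no harm \ldots\ so that in fact $\rho_k=p^\ast$''. That identity $\rho_k=\psi_k$ is precisely Conjecture~\ref{conj:clg}, which is open for $k\ge 5$; what Theorem~\ref{t:clg} proves is only $\rho_k=z_k$ with $z_k<\psi_k$ for $k\ge 5$ (and $z_k/\psi_k\to\log(1/\varphi)/\log 2\approx 0.6943$). Consequently your analytic step --- proving $(1-\tfrac{\ln k}{3k})^k\ge \tfrac{\ln k}{3k}$, i.e.\ $\psi_k\ge\tfrac{\ln k}{3k}$ --- is correct but does not give what Theorem~\ref{t:clg} needs. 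The paper instead proves the sharper $\psi_k\ge\tfrac{2\ln k}{3k}$ and separately $z_k/\psi_k>\tfrac12$ (the latter via an explicit lower bound on $z_k$ coming from the recursive definition of $\mu_k$), combining these to get $z_k>\tfrac{\ln k}{3k}$.

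A smaller point: the one-coordinate inequality underlying the entropy argument is not the expectation inequality $\mathbb{E}\big[H(1-\prod(1-R_j))\big]\ge\mathbb{E}[H(R)]$ you wrote, but the pointwise multiplicative bound $h(\prod_i x_i)\ge\frac{\mu_k}{k}\sum_i h(x_i)\prod_{j\ne i}x_j$ (Corollary~\ref{coro:main}); after averaging and using $\Pr[A_{j,i}=0]\ge p$ this becomes $H(\cup_j A_j)\ge p^{k-1}\mu_k\,\log|\F|$, and solving $p^{k-1}\mu_k\le 1+k\eps+2\eps\log(1/\eps)/\log|\F|$ is where both the constant $z_k=1-\mu_k^{1/(1-k)}$ and the exponent $1/(k-1)$ in $\delta$ come from.
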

Let $\psi_k$ be the unique real root of $(1-x)^k-x$ in $[0,1]$ (so $\psi=\psi_2$).
The construction used to prove the second part of Theorem \ref{t:cl}
generalizes to almost $k$-union closed set systems.
\begin{proposition}\label{p:1}
	Let $k \ge 2$ be an integer. For every $n$, there exists $\F \subseteq 2^{[n]}$, $\F \neq \{\emptyset\}$
	such that $\F$ is $1-o_n(1)$ approximate $k$-union closed, while every element is contained in at most $\psi_k+o_n(1)$ sets.
\end{proposition}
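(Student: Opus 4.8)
The plan is to generalize the Chase--Lovett construction, which for $k=2$ is a union of geometrically spaced Hamming shells. Let $h$ denote the binary entropy function. Pick a sequence $p_n\downarrow\psi_k$ tending to $\psi_k$ slowly enough that $(p_n-\psi_k)n\to\infty$ (say $p_n=\psi_k+n^{-1/3}$), put $q_n=1-p_n$, and fix a window $w_n$ with $\sqrt{n\log n}\ll w_n\ll(p_n-\psi_k)n$ (say $w_n=\lfloor n^{0.55}\rfloor$). Let $E=\{1+(k-1)m:m\ge0\}=\{1,k,2k-1,\dots\}$ and define, with a suitable constant $C=C(k)$,
\[
\F_n=\{A\subseteq[n]:|A|\ge n-Cw_n\}\ \cup\ \bigcup_{e\in E}\Big\{A\subseteq[n]:\big|\,|A|-(1-q_n^{e})n\,\big|\le Cw_n\Big\}.
\]
The shell $e=1$ (the sets of density $\approx p_n$) will be the bulk of $\F_n$, and the first term is a negligible ``overflow'' reservoir.

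Three things then need checking. (i) $\F_n\neq\{\emptyset\}$ is immediate, since $\F_n$ contains every set of size $\lfloor p_n n\rfloor>0$. (ii) $\F_n$ is $1-o_n(1)$ approximate $k$-union closed: the algebraic point is that $E$ is closed under $k$-fold sums, since $\sum_{i=1}^k(1+(k-1)m_i)=1+(k-1)(1+\sum_i m_i)\in E$. For uniform $A_1,\dots,A_k\in\F_n$, with probability $1-o_n(1)$ none lies in the overflow reservoir (of size $2^{O(w_n\log n)}\ll 2^{(h(\psi_k)-o(1))n}\le|\F_n|$); conditioning on the sizes $s_i=|A_i|$, each $A_i$ is a uniform $s_i$-subset with $s_i=(1-q_n^{e_i})n\pm Cw_n$, $e_i\in E$, so $\big|[n]\setminus\bigcup_iA_i\big|$ has mean $n\prod_iq_n^{e_i}\pm O(kw_n)=n\,q_n^{\sum_ie_i}\pm O(kw_n)$ and, by a standard bounded-differences argument over the random permutations generating the $A_i$, concentrates within $O(\sqrt{n\log n})$ of its mean with probability $1-n^{-10}$; since $\sum_ie_i\in E$ and $w_n\gg\sqrt{n\log n}$, this places $\bigcup_iA_i$ inside the shell indexed by $\sum_ie_i$ (or in the overflow reservoir when that index is so large that $q_n^{\sum_ie_i}n$ is tiny), hence in $\F_n$; summing the $\le n^{-10}$ conditional failure probabilities gives $P(\bigcup_iA_i\notin\F_n)=o_n(1)$. (iii) By symmetry every element lies in the same fraction of $\F_n$. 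Writing $L_e$ for the $e$-shell, $|L_e|\asymp w_n\binom{n}{(1-q_n^e)n}=2^{h(q_n^e)n+O(\log n)}$ (using $h(x)=h(1-x)$), and every element lies in a $(1-q_n^e)\pm o(1)$ fraction of $L_e$; thus the element-frequency is $(1+o(1))$ times the $|L_e|$-weighted average of the numbers $1-q_n^e$ (the $2^{o(n)}$-sized overflow reservoir being irrelevant). So it suffices to prove $h(q_n)>h(q_n^e)$ for every $e\in E$ with $e\ge k$, which forces the $e=1$ shell to dominate $\F_n$ and drives the weighted average to $1-q_n=p_n=\psi_k+o_n(1)$.

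The one substantive point is this last inequality, and it is exactly where the defining equation $(1-x)^k=x$ of $\psi_k$ enters. Writing $q_k=1-\psi_k$, the equation says $q_k^k=1-q_k$, i.e.\ $q_k+q_k^k=1$; since $q_k>\tfrac12$, the numbers $q_k$ and $q_k^k$ are symmetric about $\tfrac12$, so at $p=\psi_k$ the shells $e=1$ and $e=k$ would have equal size and the frequency would tend to $\tfrac12$, not $\psi_k$ --- this is why one must perturb. Taking $p_n>\psi_k$, hence $q_n<q_k$, and using that $t\mapsto t+t^k$ is increasing, gives $q_n+q_n^k<1$, so (for $n$ large, where $q_n^k<\tfrac12<q_n$) $q_n$ is strictly closer to $\tfrac12$ than $q_n^k$, whence $h(q_n)>h(q_n^k)$; and $h(q_n^k)>h(q_n^e)$ for all larger $e\in E$ because those exponents only push $q_n^e$ further below $\tfrac12$. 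A short computation gives $h(q_n)-h(q_n^k)=\Theta(p_n-\psi_k)$, so $|L_1|/|L_e|=2^{\Omega((p_n-\psi_k)n)-O(\log n)}\to\infty$, beating the $O(\log n)$ many shells present in $\F_n$. The scalings $p_n-\psi_k\asymp n^{-1/3}$ and $w_n\asymp n^{0.55}$ keep all the $o_n(1)$ terms and the constant $C(k)$ mutually consistent, completing the argument. The main obstacle is really just (iii): identifying that the crossover between ``shell $1$ dominates'' and ``shell $k$ dominates'' occurs precisely at the root of $(1-x)^k=x$, and then quantifying the domination well enough that it survives the $\Theta(\log n)$ other shells.
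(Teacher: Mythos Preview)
Your approach is in spirit the same as the paper's---a single dominant layer near density $\psi_k$ together with a ``reservoir'' to catch the $k$-fold unions---but your version of the reservoir (a tower of thin shells indexed by $E$) contains a genuine gap that the paper's version (a single upward-closed ray) avoids.

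The problem is in step (ii). You condition on sizes $s_i=(1-q_n^{e_i})n\pm Cw_n$ and assert that the union lands in the shell indexed by $\sum_i e_i$. But when you sample uniformly from $\F_n$, the sizes are \emph{not} spread over the window: within the $e=1$ shell the number of sets of size $s$ is $\binom{n}{s}$, which is strictly increasing there (since $p_n<\tfrac12$), so a uniformly chosen set from that shell has $s_i=p_n n+Cw_n-O(1)$ with high probability. Then the complement of the union has expected size
\[
n\Bigl(q_n-\tfrac{Cw_n}{n}+O(n^{-1})\Bigr)^{k}=q_n^{k}\,n-k\,q_n^{k-1}Cw_n+o(w_n),
\]
so the union sits at $(1-q_n^{k})n+k\,q_n^{k-1}Cw_n+O(\sqrt{n\log n})$. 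For this to lie in the $e=k$ shell you need $k\,q_n^{k-1}\le 1$; but $q_n^{k-1}\to\varphi_k^{k-1}=\alpha_k=\psi_k/\varphi_k$, and $\psi_k>1/(k+1)$ (since $(1-\tfrac{1}{k+1})^{k}>\tfrac{1}{k+1}$ by Bernoulli, so the root of $(1-x)^k-x$ lies to the right of $\tfrac{1}{k+1}$). Hence $k\alpha_k>1$ for every $k\ge2$, and the typical union overshoots the $e=k$ shell by a fixed multiple of $Cw_n$, landing in the $\Theta(n)$-wide gap before the $e=2k-1$ shell and therefore outside $\F_n$. No choice of the constant $C$ repairs this, because the overshoot itself scales with $C$.

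The paper sidesteps the issue by taking a \emph{single} Hamming layer $\F_1$ at size $\lfloor\psi_k n+n^{2/3}\rfloor$ (so there is no window and no overshoot term) together with the upward-closed family $\F_2=\{A:|A|\ge\lfloor(1-\psi_k)n\rfloor\}$ (so any upward drift in the union size is harmless). Your tower of shells and the closure of $E$ under $k$-fold sums are then unnecessary: with probability $1-o(1)$ all $k$ samples come from $\F_1$, and the union lands in $\F_2$. Your step (iii)---identifying the crossover at the root of $(1-x)^k=x$ and perturbing above it so the first layer dominates---is exactly the right idea and matches the paper. The simplest fix to your write-up is to replace the $e=1$ shell by a single layer, or to let the shell widths grow geometrically with the index; but once you do either, the extra shells do no work and you recover the paper's two-part construction.
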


The following conjecture asserts that the first part of Theorem \ref{t:cl} also generalizes to almost $k$-union closed set systems.
\begin{conjecture}\label{conj:clg}
	Let $\F \subseteq 2^{[n]}$, $\F \neq \{\emptyset\}$ be a $(1-\eps)$-approximate $k$-union closed set system, where $0 \le \eps <\frac{1}{2}$. Then there is an element contained in a $\psi_k-\delta$ fraction of sets in $\F$, where $\delta = \left(k\eps + 2\eps \log \frac{1}{\epsilon}/\log |\F|\right)^{1/(k-1)}$.
\end{conjecture}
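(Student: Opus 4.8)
Conjecture~\ref{conj:clg} differs from the proven Theorem~\ref{t:main} only in replacing the constant $\tfrac{\ln k}{3k}$ by the optimal value $\psi_k$ of Proposition~\ref{p:1}, the error term $\delta$ being identical. So the plan is to rerun the entropy argument underlying Theorem~\ref{t:main} — Gilmer's method applied to the union of $k$ independent uniform members of $\F$, with the Chase--Lovett device absorbing the $(1-\eps)$-approximation into $\delta$ — but feed it a \emph{sharp} coordinatewise entropy inequality in place of the lossy one currently available. Concretely, let $\mathbf a^{(1)},\dots,\mathbf a^{(k)}\in\{0,1\}^n$ be the indicator vectors of $k$ independent, uniformly random members of $\F$, let $\mathbf u=\mathbf a^{(1)}\vee\cdots\vee\mathbf a^{(k)}$ indicate their union, and set $p_i=\Pr[a_i^{(1)}=1]$, so that $\Pr[u_i=1]=1-(1-p_i)^k$. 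Since an element $i$ lies in exactly a $p_i$-fraction of $\F$, we may assume toward a contradiction that $p_i<\psi_k-\delta$ for every $i$. On one hand $H(\mathbf a^{(1)})=\log|\F|$; on the other, $\mathbf u$ is the indicator of $\bigcup_j A^{(j)}\in\F$ for a $(1-\eps)$-fraction of the $k$-tuples, and the Chase--Lovett argument \cite{CL-2022} (splitting $2^{[n]}$ according to membership in $\F$) then bounds $H(\mathbf u)$ by $\log|\F|$ plus an error which, as in the proof of Theorem~\ref{t:main}, is exactly what produces the stated $\delta$.

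\textbf{The core inequality.} What remains is a matching lower bound of the form
\[
H(\mathbf u)\ \ge\ \log|\F|\;+\;\sum_{i=1}^{n} g_k(p_i),
\]
for a function $g_k$ with $g_k(x)>0$ on $(0,\psi_k)$ and $g_k(\psi_k)=0$; combining this with the upper bound then forces $p_i\ge\psi_k-\delta$ for some $i$, a contradiction. Such an inequality should be obtained by expanding $H(\mathbf u)$ and $H(\mathbf a^{(1)})$ by the chain rule in one common coordinate order and comparing them coordinate by coordinate, being careful to retain the mutual-information contributions $I\!\left(u_i;\text{past}\right)$ that a naive estimate simply discards; one may also first reduce to the balanced case $p_1=\cdots=p_n$ by convexity. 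The number $\psi_k$, the unique root of $(1-x)^k-x$ in $[0,1]$, should be precisely the largest threshold up to which such a $g_k$ can be kept nonnegative — this is where the algebraic identity defining $\psi_k$ would enter, exactly as $\psi=\psi_2$ does for ordinary unions in \cite{AHS-2022,CL-2022,pebody-2022,sawin-2022}.

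\textbf{Main obstacle.} The entire difficulty is establishing the core inequality with the \emph{sharp} threshold $\psi_k$. The weaker constant $\tfrac{\ln k}{3k}$ of Theorem~\ref{t:main} already comes from a valid inequality of this shape, but the straightforward $k$-wise generalization of Gilmer's two-set estimate — iterating the two-set bound, or using coordinatewise subadditivity of entropy together with $H(\mathbf u)\le\log|\F|$ — is lossy, its positivity threshold sitting a constant factor below $\psi_k$. Matching $\psi_k$ appears to require importing, for general $k$, the refinements that removed the analogous loss when $k=2$: either a finer chain-rule bookkeeping in the spirit of \cite{pebody-2022,sawin-2022}, or an auxiliary coupling of $\mathbf a^{(1)},\dots,\mathbf a^{(k)}$ in the spirit of \cite{AHS-2022,CL-2022}, arranged so that equality in the resulting inequality is attained exactly at the configuration $p_1=\cdots=p_n=\psi_k$ — which Proposition~\ref{p:1} exhibits as a genuine extremal point. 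Verifying that such an inequality holds, and in particular that the induced multivariate optimum is attained at this symmetric point and equals $\psi_k$, is the crux, and is what currently leaves the statement at the level of a conjecture.
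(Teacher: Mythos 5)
First, note that the statement you were asked to prove is stated in the paper as Conjecture~\ref{conj:clg}: the paper itself does not prove it in full generality. It proves the cases $k=3,4$ (Theorem~\ref{t:clg-345}) and, for all $k$, only a weakened version with $\psi_k$ replaced by $z_k=1-\mu_k^{1/(1-k)}$ (Theorem~\ref{t:clg}); the general case is reduced to an open analytic conjecture. Your proposal correctly reconstructs the architecture that the paper uses --- Gilmer's entropy argument applied to $\bigcup_{j=1}^k A_j$ for independent uniform $A_j\in\F$, the chain rule plus data processing coordinate by coordinate, and the Chase--Lovett conditioning on the indicator $I$ of $\bigcup_j A_j\in\F$ to absorb the $(1-\eps)$-approximation into $\delta$ --- and it correctly locates the crux in a sharp pointwise entropy inequality whose extremal configuration is the symmetric point at $\psi_k$. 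But the proposal then \emph{asserts} that inequality rather than proving it, and you say so yourself. That assertion is exactly the open content of the conjecture, so what you have is a correct reduction plan, not a proof.

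For comparison, the paper makes the missing ingredient completely explicit. The needed inequality is $h\bigl(\prod_{i=1}^k x_i\bigr)\ge \frac{1}{k\alpha_k}\sum_{i=1}^k h(x_i)\prod_{j\ne i}x_j$ on $[0,1]^k$ (Corollary~\ref{coro:main} with $\mu_k=1/\alpha_k$), which is reduced to its diagonal case by showing every minimum of $M_k(x_1,\dots,x_k)=g(\prod x_i)/\sum g(x_i)$ is attained at a diagonal point (Lemma~\ref{l:mk-bounded}, via a Lagrange-type argument using the monotonicity of $xg'(x)$); the diagonal case is precisely Conjecture~\ref{conj:fk}, that $f_k(x)=\alpha_k h(x^k)-x^{k-1}h(x)\ge 0$ on $[0,1]$. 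That single-variable inequality is what carries the algebraic identity $(1-\psi_k)^k=\psi_k$ (via $f_k(\varphi_k)=f_k'(\varphi_k)=0$), and for $k=3,4$ it is established by a Rolle-type root count: the $(k+1)$-st derivative of $f_k$ has numerator $p_k(x)$, and showing $p_k$ has at most two roots in $(0,1)$ pins down all $k+3$ roots of $f_k$ in $[0,1]$ and forces nonnegativity. For $k\ge 5$ this root count is itself conjectural (Conjecture~\ref{conj:real-roots}), which is why the statement remains a conjecture. Two further points where your sketch would need repair even granted the core inequality: your additive formulation $H(\mathbf u)\ge\log|\F|+\sum_i g_k(p_i)$ is not the form that actually closes the argument --- the multiplicative bound $H(\cup_j A_j)\ge p^{k-1}\mu_k\log|\F|$, with $p$ the \emph{minimum} non-containment frequency, is what combines with the upper bound $2\eps\log(1/\eps)+(1+k\eps)\log|\F|$ to yield exactly $\delta=\bigl(k\eps+2\eps\log\tfrac1\eps/\log|\F|\bigr)^{1/(k-1)}$; and your contradiction step would additionally require a quantitative lower bound on $g_k$ away from $\psi_k$, which the multiplicative form renders unnecessary.
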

\noindent
Theorem \ref{t:cl} is the case $k=2$ of Conjecture \ref{conj:clg}. We prove the next few cases of Conjecture \ref{conj:clg}.
\begin{theorem}\label{t:clg-345}
	Conjecture \ref{conj:clg} holds for $k=3,4$.
\end{theorem}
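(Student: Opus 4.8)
The plan is to derive Theorem~\ref{t:clg-345} from the general reduction in Theorem~\ref{t:clg}, exactly as Theorem~\ref{t:main} is derived from it --- the only change being that for $k=3,4$ we feed Theorem~\ref{t:clg} the \emph{sharp} constant $\psi_k$ rather than $\tfrac{\ln k}{3k}$. Recall that Theorem~\ref{t:clg} reduces Conjecture~\ref{conj:clg}, for a fixed $k$ and candidate constant $c=\psi_k-\delta$, to an analytic inequality about the binary entropy function $h$, roughly of the form: for all i.i.d.\ $[0,1]$-valued $Q_1,\dots,Q_k$ with common mean at most $\psi_k-\delta$,
\[
\mathbb{E}\!\left[h\!\left(1-\prod_{j=1}^{k}(1-Q_j)\right)\right]\ \ge\ \bigl(1+c_k\,\delta^{\,k-1}\bigr)\,\mathbb{E}\big[h(Q_1)\big]
\]
for a suitable $c_k>0$. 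This is precisely the inequality Gilmer's chain-rule argument produces: revealing $\bigcup_jA_j$ coordinate by coordinate and conditioning coordinate $i$ on the independent histories $(A_1)_{<i},\dots,(A_k)_{<i}$ replaces that coordinate's entropy deficit by an inequality of this shape with $Q_j^{(i)}=\Pr[i\in A_j\mid (A_j)_{<i}]$, whose mean is the element-frequency $p_i\le\psi_k-\delta$; summing over $i$ and using $H(\bigcup_jA_j)\le\log|\F|+h(\eps)+k\eps\log|\F|$ (since $\bigcup_jA_j$ takes at most $|\F|^{k}$ values) turns a gain of order $c_k\delta^{k-1}\log|\F|$ into $c_k\delta^{k-1}\log|\F|\le k\eps\log|\F|+h(\eps)$, which yields the stated bound on $\delta$. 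Since the point-mass case of the displayed inequality is $h(1-(1-x)^k)\ge h(x)$, which for $x\le\tfrac12$ holds exactly when $x\le\psi_k$, the constant $\psi_k$ is forced --- so it remains to verify the inequality for $k=3$ and $k=4$.

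For a fixed $k\in\{3,4\}$ I would follow and extend the Chase--Lovett analysis of the case $k=2$. First, the infimum of the gain $\mathbb{E}[h(1-\prod_j(1-Q_j))]-\mathbb{E}[h(Q_1)]$ over laws of $Q_1$ with mean at most $\psi_k-\delta$ is attained, by weak-$*$ compactness and continuity. The next step is a reduction to extremal laws of small support, and here lies the genuine obstacle: the functional is a degree-$k$ polynomial in the underlying measure (for $k=2$ this nonlinearity is already present but mild), so ``two-point laws suffice'' is not automatic. I would fix all but one atom of a minimizing law and use convexity/concavity of the one-variable section $q\mapsto h(1-\prod_j(1-q_j))$ together with the first-order (Lagrange) optimality conditions --- which constrain the atoms to a low-dimensional variety --- to bring an extremal law down to at most three atoms, one of which is forced to an endpoint $\{0\}$ or $\{1\}$. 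This turns the inequality into a fixed low-dimensional constrained optimization.

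That optimization I would then carry out explicitly. For a two-atom law $\{0,t\}$ of weight $r$ with $rt\le\psi_k-\delta$, the gain is an explicit combination of $h(t),h(1-(1-t)^2),\dots,h(1-(1-t)^k)$; dividing by $\mathbb{E}[h(Q_1)]=r\,h(t)$ leaves a one- or two-variable inequality that elementary calculus should settle --- monotonicity in $r$, the limit $t\to1$ (where each ratio $h(1-(1-t)^m)/h(t)\to0$, so only the weight terms survive), the point mass $r=1$ (tight at $t=\psi_k$, where $(1-\psi_k)^k=\psi_k$ produces the cancellation), and the interior critical points --- and the (at most) three-atom case calls for a longer but parallel case split. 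Keeping a uniform lower bound of order $\delta^{k-1}$ on the gain throughout --- which reflects that the binding laws degenerate as $\delta\to0$; note that, unlike for $k=2$, the point mass is \emph{not} extremal when $k\ge3$ --- supplies the quantitative $\delta$-dependence.

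The crux, and where I expect the work to concentrate, is the sharp analytic inequality rather than the entropy bookkeeping: both the reduction to finitely many atoms and the final verification become more delicate as $k$ grows, which is exactly why $k=3,4$ is the current frontier. For $k=4$ in particular I would anticipate needing either a careful by-hand dissection into several sub-cases or a rigorous interval-arithmetic check of the resulting univariate inequalities.
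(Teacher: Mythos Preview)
Your high-level plan---Gilmer's chain-rule entropy argument followed by a sharp analytic inequality---is the right skeleton, and the entropy bookkeeping you sketch is essentially what the paper does in Lemma~\ref{l:main} and the proof of Theorems~\ref{t:clg-345}/\ref{t:clg}. But the analytic inequality you set out to prove and the route you propose for it are both different from the paper's, and the hard steps are only asserted, not carried out.

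The paper does \emph{not} work with a variational problem over laws of i.i.d.\ $[0,1]$-valued $Q_j$. It proves instead a \emph{pointwise} inequality for deterministic $x_1,\dots,x_k\in[0,1]$ (Corollary~\ref{coro:main}):
\[
h\Bigl(\prod_{i=1}^k x_i\Bigr)\ \ge\ \frac{\mu_k}{k}\sum_{i=1}^k h(x_i)\prod_{j\ne i}x_j,
\]
which is applied inside the expectation and then factored using independence. The multivariate inequality is reduced (Lemma~\ref{l:mk-bounded}) to the diagonal $x_1=\dots=x_k=t$ by a short Lagrange-multiplier argument: any interior minimizer of $M_k$ satisfies $Q(x_1^*)=\cdots=Q(x_k^*)$ for the strictly monotone $Q(x)=xg'(x)$, hence all coordinates coincide. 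The diagonal case is exactly the one-variable inequality $f_k(t)=\alpha_k h(t^k)-t^{k-1}h(t)\ge 0$ on $[0,1]$, and the entire content specific to $k=3,4$ is a Boppana-style verification of this: $f_k$ has a root of multiplicity $k$ at $0$, a simple root at $1$, a double root at $\varphi_k$, and one shows the $(k{+}1)$st derivative---an explicit rational function with polynomial numerator $p_k(x)$ of degree $k^2-1$---has at most two roots in $(0,1)$ (Propositions~\ref{prop:p3},~\ref{prop:p4}); Rolle's theorem then forces $f_k\ge 0$. No interval arithmetic is used; the $k=4$ case is handled by tracking real-root counts of successive derivatives of $p_4$.

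Your proposal instead aims to reduce a degree-$k$ functional on probability measures to few-atom laws and then optimize. You yourself flag the support reduction as ``the genuine obstacle'' and do not carry it out; the subsequent two- and three-atom optimizations are likewise only promised (``elementary calculus should settle''). So as it stands this is a plan with its two decisive steps missing. It may be workable, but it is structurally harder than necessary: the paper's pointwise formulation bypasses the measure-theoretic reduction altogether and lands directly on a single-variable inequality whose verification for $k=3,4$ is a concrete root-counting exercise for an explicit polynomial.
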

We next prove a variant of Conjecture \ref{conj:clg} for {\em all} $k$ where instead of $\psi_k$, we use a smaller constant. Moreover, that constant is close to $\psi_k$ in the sense made precise in the following theorem
(see Table \ref{table:k-vars} for a comparison of $z_k$ and $\psi_k$ for small $k$).
\begin{theorem}\label{t:clg}
	Conjecture \ref{conj:clg} holds with the constant $z_k$ instead of $\psi_k$ where
	$$
	z_k > \frac{\ln k}{3k}\;, \qquad \frac{1}{2} < \frac{z_k}{\psi_k} \le 1\;, \qquad \lim_{k \rightarrow \infty} \frac{z_k}{\psi_k} = \frac{\log \frac{1}{\varphi}}{\log 2} \approx 0.6943\;.
	$$
\end{theorem}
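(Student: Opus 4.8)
The plan is to run Gilmer's entropy argument, in the refined form used by Chase and Lovett to obtain Theorem~\ref{t:cl}, with $k$ independent samples in place of two; this reduces everything to an inequality about the entropy of a $k$-fold disjunction of Bernoulli bits of small mean, and we then \emph{define} $z_k$ to be the best constant that inequality supplies. The real content of the theorem is the subsequent analysis of $z_k$.

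\smallskip
\emph{The reduction.} Let $\F\neq\{\emptyset\}$ be $(1-\eps)$-approximate $k$-union closed, put $N=|\F|$, let $A_1,\dots,A_k$ be i.i.d.\ uniform in $\F$, and set $U=A_1\cup\dots\cup A_k$. Since $U\in\F$ with probability at least $1-\eps$ while $U$ always lies in the set of at most $N^k$ possible unions, the conditioning computation behind Theorem~\ref{t:cl} (with $N^2$ replaced by $N^k$) gives $H(U)\le(1+\eta)\log N$ with $\eta=k\eps+2\eps\log(1/\eps)/\log N$, which is exactly $\delta^{k-1}$ for the $\delta$ in the statement. For the lower bound, expand $H(U)=\sum_{i=1}^nH(U_i\mid U_{<i})$ and, before revealing coordinate $i$, condition on the length-$(i-1)$ prefixes of all of $A_1,\dots,A_k$; conditionally the $i$-th bits become independent Bernoulli bits whose parameters $Q^{(1)}_i,\dots,Q^{(k)}_i$ are i.i.d.\ over $j$, with common mean the density $d_i$ of element $i$ in $\F$. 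As conditioning on a refinement cannot raise entropy, and since $H(A_1)=\log N$,
\[
H(U)\ \ge\ \sum_{i=1}^n\mathbb{E}\Big[h\Big(1-\prod_{j=1}^k\big(1-Q^{(j)}_i\big)\Big)\Big],
\qquad
\sum_{i=1}^n\mathbb{E}\big[h(Q^{(1)}_i)\big]=\log N,
\]
with $h$ the binary entropy function. Hence, if every element has $d_i\le c$, one contradicts the upper bound once
\begin{equation}\label{eq:keyineq}
\mathbb{E}\Big[h\Big(1-\prod_{j=1}^k(1-q_j)\Big)\Big]\ \ge\ \mathbb{E}\big[h(q_1)\big]+\gamma_k(c)
\end{equation}
holds for all i.i.d.\ $q_1,\dots,q_k$ of common mean at most $c$ and some $\gamma_k(c)>\eta$ (using $\log N\le n$). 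Define $z_k$ to be the largest $c$ for which \eqref{eq:keyineq} can be proved with $\gamma_k(c)>0$, after carrying out the reductions of \cite{CL-2022} (extreme‑point reduction of the admissible laws, removal of degenerate atoms) in the $k$-tuple setting. It remains to understand how $\gamma_k(c)$ degenerates as $c\uparrow z_k$ — it vanishes to order $k-1$, which converts the additive slack $\eta$ into $\eta^{1/(k-1)}=\delta$ (recovering Theorem~\ref{t:cl} when $k=2$) — and to prove the three displayed estimates.

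\smallskip
\emph{The estimates on $z_k$.} All three concern only the one‑variable inequality \eqref{eq:keyineq}. $(1)$\ $z_k\le\psi_k$: for the point mass $q_j\equiv c$ the two sides of \eqref{eq:keyineq} are $h(1-(1-c)^k)$ and $h(c)$; since $(1-\psi_k)^k=\psi_k$ forces $1-(1-\psi_k)^k=1-\psi_k$, these are equal at $c=\psi_k$ and the left one is strictly smaller for $c>\psi_k$, so no positive gain survives. $(2)$\ $z_k>\tfrac12\psi_k$ (and the finite‑$k$ part of $z_k>\frac{\ln k}{3k}$): substitute $c=\tfrac12\psi_k$ (resp.\ $c=\frac{\ln k}{3k}$) into \eqref{eq:keyineq}, reduce to point masses and two‑atom laws $(1-t)\delta_0+t\,\delta_a$ by the extreme‑point argument, and verify the resulting explicit inequalities, which hold with comfortable slack since these thresholds lie well below the first failure of \eqref{eq:keyineq}. $(3)$\ $z_k/\psi_k\to\frac{\log(1/\varphi)}{\log 2}$: write $c=\theta\psi_k$ and let $k\to\infty$. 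On the two‑atom law $(1-\theta\psi_k/a)\delta_0+(\theta\psi_k/a)\,\delta_a$ the count $B$ of active atoms is $\mathrm{Binomial}(k,\theta\psi_k/a)$, hence concentrates near $\theta\psi_k k/a$; as $k\psi_k\sim\ln k$, the left side of \eqref{eq:keyineq} is governed by $h\big(1-(1-a)^{B}\big)$ and is of order $k^{-\theta\lambda(a)}$ up to polylog factors, where $\lambda(a)=\frac1a\ln\frac1{1-a}$, while the right side is of order $\psi_k\,h(a)/a$. Comparing exponents, \eqref{eq:keyineq} survives precisely when $\theta\lambda(a)\le1$ for every admissible $a$, and optimizing over $a$ (where the cap $a\le1$ and the requirement that $h(a)$ not be negligible interact) collapses the borderline condition to a transcendental equation with solution $\theta^{*}=\log(1/\varphi)/\log2$, $\varphi=\tfrac{\sqrt5-1}{2}$; $\theta^{*}>\tfrac13$ then gives the asymptotic half of $z_k>\frac{\ln k}{3k}$. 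The matching upper bound for the limit is obtained by exhibiting, for $\theta$ slightly above $\theta^{*}$, this same law as a genuine counterexample to \eqref{eq:keyineq} for all large $k$, which also shows $z_k$ cannot be pushed higher by this route.

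\smallskip
The principal obstacle is part $(3)$: pinning down the \emph{true} extremal law in \eqref{eq:keyineq}, rather than a convenient family that is merely tight, and carrying the ensuing optimization cleanly through the $k\to\infty$ limit so that it lands exactly on the golden‑ratio constant. This is also where the argument for general $k$ must part ways with the case analysis behind Theorem~\ref{t:clg-345}: for $k\le4$ one checks that the genuine extremizers lie in a short explicit list of two‑atom laws and that \eqref{eq:keyineq} in fact holds all the way up to $c=\psi_k$, whereas for larger $k$ the present robust‑but‑lossy argument is, so far, all that is available.
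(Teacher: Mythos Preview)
Your proposal has a genuine conceptual gap: you have misidentified what $z_k$ is. You define $z_k$ as ``the largest $c$ for which \eqref{eq:keyineq} can be proved with $\gamma_k(c)>0$'', i.e.\ as the extremal threshold of the natural $k$-tuple entropy inequality. But that threshold is (conjecturally, and provably for $k\le 4$) exactly $\psi_k$, not something strictly smaller; indeed your own item~(1) shows the point mass $q_j\equiv c$ makes the gap vanish at $c=\psi_k$. If your definition were the operative one, the ratio $z_k/\psi_k$ would tend to $1$, not to $\log(1/\varphi)/\log 2\approx 0.6943$. The whole point of the theorem is that $z_k$ is a \emph{specific, explicitly constructed, suboptimal} constant that one can actually prove, and the limit $0.6943$ is an artifact of that construction, not an intrinsic feature of the extremal problem.

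Concretely, the paper works with the pointwise multiplicative inequality $h\big(\prod_i x_i\big)\ge \frac{\mu_k}{k}\sum_i h(x_i)\prod_{j\ne i}x_j$ (not your additive inequality), where the best constant $\mu_k$ is $\min_{x\in(0,1)} h(x^k)/(x^{k-1}h(x))$. For $k\le 4$ this minimum is shown to equal $1/\alpha_k$, giving $z_k=\psi_k$. For $k\ge 5$ the minimum is not computed; instead one \emph{iterates the $k=2$ Chase--Lovett inequality} $h(xy)\ge\frac{1}{2\varphi}(xh(y)+yh(x))$ by dyadic splitting $x^k=x^{\lfloor k/2\rfloor}x^{\lceil k/2\rceil}$, which yields $\mu_k\ge (2\varphi)^{-\lfloor\log_2 k\rfloor}$ up to a correction in $q=k-2^{\lfloor\log_2 k\rfloor}$. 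One then \emph{defines} $z_k=1-\mu_k^{1/(1-k)}$ with this explicit $\mu_k$. The golden ratio enters only because each halving step costs a factor $1/\varphi$; the limit computation is then a direct calculus exercise comparing $1-\varphi^{(\log_2 k)/(k-1)}$ to $\psi_k\sim(\ln k)/k$, with no extremal analysis or ``transcendental equation'' involved. Your asymptotic story in part~(3), with binomial concentration over two-atom laws and an optimization landing on $\theta^*=\log(1/\varphi)/\log 2$, has no counterpart in the actual argument and, as written, does not lead there: nothing in the genuine extremal problem singles out $\varphi$. (Separately, your claim that $\gamma_k(c)$ vanishes to order $k-1$ at the threshold is also off; for the point mass the gap $h(1-(1-c)^k)-h(c)$ vanishes to first order at $\psi_k$. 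The exponent $1/(k-1)$ in $\delta$ comes instead from inverting $p\mapsto p^{k-1}\mu_k$ in the multiplicative entropy bound $H(\cup A_j)\ge p^{k-1}\mu_k\log|\F|$.)
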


An important ingredient in the proof of Theorem \ref{t:cl} is a generalization of an inequality
stated by Boppana \cite{boppana-1985} concerning the minimum of some function in $[0,1]$ related to binary entropy.
This inequality was proved by Boppana \cite{boppana-2023} and by Alweiss, Huang, and Sellke \cite{AHS-2022}.
Though technical, this generalization can be proved rigorously for $k=3,4$, while for larger $k$,
it can be shown to reduce Conjecture \ref{conj:clg} to a conjecture about roots of certain real polynomials.
Assuming this generalization, the arguments of Gilmer and of Chase and Lovett can be rather smoothly
generalized to yield Theorems \ref{t:clg-345} and \ref{t:clg}.

We proceed to prove Proposition \ref{p:1} in Section \ref{sec:proposition}.
Section \ref{sec:inequality} considers the generalization of the aforementioned inequality of Boppana,
proving certain properties related to it. These properties are then used in Section \ref{sec:multi}
to prove a multidimensional version of the Chase-Lovett main lemma and consequently in Section \ref{sec:proofs} to prove Theorems \ref{t:clg-345} and \ref{t:clg}.

\section{The generalized construction}\label{sec:proposition}

\begin{proof}[Proof of Proposition \ref{p:1}]
The construction is a generalization of the one used by Chase and Lovett \cite{CL-2022}.
Define the following set systems over $[n]$:
$$
\F_1 = \{x \in \{0,1\}^n\,:\, |x| = \lfloor \psi_k n + n^{2/3} \rfloor\}, \quad
\F_2 = \{x \in \{0,1\}^n\,:\, |x| \ge  \lfloor (1-\psi_k) n \rfloor\}
$$
and let $\F = \F_1 \cup \F_2$.
As $\psi_k < \frac{1}{2}$, we obtain that $|\F_2| = o_n(|\F_1|)$.
Clearly, each element is in a $\psi_k+o_n(1)$ fraction of the sets $\F_1$, hence $\F$.
Finally, with probability $1-o_n(1)$, a randomly chosen $k$-tuple of sets of $\F_1$ almost surely has
more than $n(\sum_{j=1}^k(-1)^{j-1}\binom{k}{j}\psi_k^j)=n(1-\psi_k)$ elements where we have used
$(1-\psi_k)^k=\psi_k$. Consequently, a randomly chosen $k$-tuple of sets of $\F$ is almost surely in $\F$,
so $\F$ is $1-o_n(1)$ approximate $k$-union closed.
\end{proof}

\section{An inequality concerning binary entropy}\label{sec:inequality}

Recall that $\psi_k$ denotes the unique real root of $(1-x)^k-x$ in $[0,1]$.
Let
\begin{equation}\label{e:basic}
\varphi_k \coloneqq1 - \psi_k\,, \qquad \alpha_k \coloneqq {\varphi_k}^{k-1}=\frac{1}{\varphi_k}-1 = \frac{\psi_k}{\varphi_k}\,.
\end{equation}
\begin{table}[h]
	\centering
	\begin{tabular}{c||c|c|c|c}
		$k$ &  $\varphi_k$ & $\psi_k$ & $z_k$ & $\alpha_k$ \\
		\hline
		$2$ & $0.6180$ & $0.3819$ & $0.3819$ & $0.6180$\\
		\hline
		$3$ & $0.6823$ & $0.3176$ & $0.3176$ & $0.4655$\\
		\hline
		$4$ & $0.7244$ & $0.2755$ & $0.2755$ & $0.3802$\\
		\hline
		$5$ & $0.7548$ & $0.2451$ & $0.2416$ & $0.3247$\\
		\hline
		$6$ & $0.7780$ & $0.2219$ & $0.2183$ & $0.2851$\\
		\hline
		$7$ & $0.7965$ & $0.2034$ & $0.2006$ & $0.2554$\\
		\hline
		$8$ & $0.8116$ & $0.1883$ & $0.1863$ & $0.2319$\\
		\hline
		$16$ & $0.8771$ & $0.1228$ & $0.1204$ & $0.1400$
	\end{tabular}
	\caption{The values of $\varphi_k,\psi_k,z_k,\alpha_k$ for several $k$, listed with precision $10^{-4}$.}
	\label{table:k-vars} 
\end{table}
Some values of these parameters are given in Table \ref{table:k-vars}.

Throughout this paper, all logarithms are natural.
Let $h(x)=-x \log x-(1-x)\log(1-x)$ be the binary entropy function defined
continuously in $[0,1]$ by $h(0)=h(1)=0$.
As in \cite{boppana-1985}, it will be convenient to extend $h(x)$ (continuously) to $\R$ as follows:
$$
h(x) \coloneqq
\begin{cases}
	-x \log |x| - (1-x)\log |1-x| & {\rm if}~~ x \in {\mathbb R}\setminus \{0,1\}\,;\\
	0 & {\rm if}~~ x \in \{0,1\}\,.
\end{cases}
$$
For $k \ge 2$, let $r_k(x)$, $s_k(x)$ and $f_k(x)$ be the functions with domain $\R$ defined as:
\begin{equation}\label{e:functions}
r_k(x) \coloneqq h(x^k)\,, \qquad s_k(x) \coloneqq x^{k-1}h(x)\,, \qquad f_k(x) \coloneqq \alpha_k r_k(x) - s_k(x)\,.
\end{equation}
In \cite{AHS-2022,boppana-2023} it is proved that $f_2(x)$ is nonnegative on $[0,1]$.
The proof in \cite{boppana-2023} uses only differential calculus and the proof in \cite{AHS-2022} uses both differential calculus and interval arithmetic.
\begin{conjecture}\label{conj:fk}
	$f_k(x)$ is nonnegative on $[0,1]$.
\end{conjecture}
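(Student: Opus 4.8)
The plan is to reduce Conjecture~\ref{conj:fk} (for a fixed $k$) to a statement about the number of real roots in $(0,1)$ of an explicit real polynomial; this statement can then be checked by a finite computation for $k=3,4$, and for general $k$ is exactly the polynomial-root conjecture alluded to above.

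The first step is to record the elementary facts about the zeros of $f_k$. Using the defining identities $\psi_k=\varphi_k^{\,k}$ and $\alpha_k=\varphi_k^{\,k-1}=\psi_k/\varphi_k$ together with $h(\psi_k)=h(1-\varphi_k)=h(\varphi_k)$, one checks that $f_k(0)=f_k(\varphi_k)=f_k(1)=0$, that $f_k'(\varphi_k)=0$, and that $f_k''(\varphi_k)>0$, so $\varphi_k$ is a double zero of $f_k$. From the leading asymptotics $f_k(x)\sim(1-k\alpha_k)x^k\log x$ as $x\to0^+$ and $f_k(1-t)\sim(1-k\alpha_k)t\log t$ as $t\to0^+$, together with $k\alpha_k>1$ (which is easily seen to hold for every $k\ge2$, e.g.\ because $\psi_k>1/(k+1)$), it follows that $f_k$ is strictly positive on punctured neighbourhoods of $0$ and of $1$.

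The second step is a rational reduction. Since $h(x^k)$ and $x^{k-1}h(x)$ are combinations of $\log x$, $\log(1-x)$, $\log(1-x^k)$ with polynomial coefficients, a short computation — in which the polynomial part of $f_k'$ vanishes identically — yields $f_k'(x)=x^{k-2}g_k(x)$ with
\[
g_k(x)=k(1-k\alpha_k)\,x\log x+k\alpha_k\,x\log(1-x^k)+(k-1-kx)\log(1-x).
\]
Because the coefficients of the logarithms in $g_k$ are linear in $x$, the second derivative $g_k''$ is a rational function; clearing denominators and cancelling a factor $(1-x)^2$ that divides the numerator (whose top-degree coefficient also cancels) one gets $g_k''(x)=\widetilde P_k(x)\big/\bigl(x(1-x^k)^2\bigr)$ for an explicit polynomial $\widetilde P_k$ of degree $2k-1$ with $\widetilde P_k(0),\widetilde P_k(1)<0$. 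One also has $g_k(0^+)=0$, $g_k(1^-)=-\infty$, $g_k(\varphi_k)=0$ and $g_k'(\varphi_k)>0$, from which a direct sign analysis shows that $g_k$ has at least three zeros in $(0,1)$: one in $(0,\varphi_k)$, then $\varphi_k$, then one in $(\varphi_k,1)$.

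The final step is the reduction itself: \emph{if $\widetilde P_k$ has at most two roots in $(0,1)$}, then $g_k''$ has at most two zeros there, hence (Rolle) $g_k'$ has at most three, hence (Rolle again, using the extra zero of $g_k$ at $x=0$) $g_k$ has at most three zeros in $(0,1)$ — thus exactly three. The sign pattern of $g_k$, and therefore of $f_k'=x^{k-2}g_k$, on the resulting four subintervals is then $+,-,+,-$; combined with $f_k(0)=f_k(\varphi_k)=f_k(1)=0$ and the endpoint positivity from the first step, $f_k$ increases, then decreases, then increases, then decreases, so $f_k\ge0$ on $[0,1]$ with equality only at $0,\varphi_k,1$. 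For $k=3$ and $k=4$ the polynomial $\widetilde P_k$ has degree $5$ and $7$, with coefficients in the number field $\mathbb Q(\alpha_k)$, so ``at most two roots in $(0,1)$'' can be verified by an explicit root count (a Sturm sequence, or rigorous interval arithmetic as in \cite{AHS-2022}), which proves Conjecture~\ref{conj:fk} for $k=3,4$. For general $k$, this argument shows that Conjecture~\ref{conj:fk} follows from the assertion that $\widetilde P_k$ has at most two roots in $(0,1)$ for all $k\ge2$, and this is the main obstacle: the degree of $\widetilde P_k$ grows with $k$ and its coefficients involve the algebraic number $\alpha_k$, so a uniform argument appears to require a genuinely new, entropy-free comparison at the level of polynomials — much as the passage from Boppana's numerical evidence to a proof required when $k=2$.
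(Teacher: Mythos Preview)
Your approach is correct and genuinely different from the paper's, though both reduce the conjecture to a polynomial root count via Rolle's theorem. The paper differentiates $f_k$ a full $k{+}1$ times, obtaining $f_k^{(k+1)}(x)=(k{-}1)!\,p_k(x)/\bigl(x(x^k{-}1)^k\bigr)$ with $\deg p_k=k^2{-}1$, and shows (Lemma~\ref{l:6}) that if $p_k$ has at most two roots in $(0,1)$ then the $k{+}3$ roots of $f_k$ in $[0,1]$ (order $k$ at $0$, double at $\varphi_k$, simple at $1$) are all accounted for, forcing $f_k\ge0$; the cases $k=3,4$ are then settled by a detailed analysis of $p_3$ (degree $8$) and $p_4$ (degree $15$). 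You instead factor $f_k'(x)=x^{k-2}g_k(x)$ first --- absorbing the high-order zero at $0$ --- and then take only two further derivatives of $g_k$, arriving at a polynomial $\widetilde P_k$ of degree $2k{-}1$. This linear-in-$k$ degree, versus the paper's quadratic, is a real gain: for $k=3,4$ your polynomial has degree $5$ and $7$, so the explicit verification should be substantially lighter than the paper's Propositions~\ref{prop:p3} and~\ref{prop:p4}.

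Two small points to tighten. The assertion $f_k''(\varphi_k)>0$ (equivalently $g_k'(\varphi_k)>0$) is stated but not proved for general $k$; it is easily checked for $k=3,4$, and in any case can be bypassed via real-analyticity: if $g_k$ kept a fixed sign on either of $(0,\varphi_k)$ or $(\varphi_k,1)$ then $f_k$ would be monotone there between two zeros, hence identically zero, a contradiction --- so $g_k$ must change sign on both subintervals, giving the three distinct zeros you need. Second, the phrase ``cancelling a factor $(1{-}x)^2$ that divides the numerator'' is misworded; the actual point is that $(1{-}x)^2\mid(1{-}x^k)^2$, so $x(1{-}x^k)^2$ already serves as the common denominator and the numerator then has degree $2k{-}1$ after the top coefficient cancels. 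Finally, you assert but do not carry out the root count for $\widetilde P_3,\widetilde P_4$, whereas the paper performs the analogous verification for $p_3,p_4$ in full.
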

As we shall see in the following sections, Conjecture \ref{conj:clg} reduces to Conjecture \ref{conj:fk}.
Being non-parameterized, it seems hopeless to extend the interval arithmetic part of the proof in \cite{AHS-2022} to general $k$. On the other hand, as \cite{boppana-2023} uses only differential calculus,
it may not be hopeless to extend its proof to arbitrary $k$.
In fact, we manage to do so completely rigorously for $k=3,4$. The next several lemmata prove properties
of $f_k(x)$, valid for all $k$.

\begin{lemma}\label{l:1}
	$f_k(0)=f_k(1)=f_k(\varphi_k)=f_k'(\varphi_k)=0$.
\end{lemma}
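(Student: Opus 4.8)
The plan is to verify the four claimed equalities directly from the definitions in \eqref{e:functions} and the defining relations in \eqref{e:basic}. The values at $0$ and $1$ are immediate: since $h(0)=h(1)=0$ by the (extended) definition of $h$, both $r_k$ and $s_k$ vanish at $0$ and at $1$ — for $r_k(x)=h(x^k)$ note $0^k=0$ and $1^k=1$, and for $s_k(x)=x^{k-1}h(x)$ the factor $h(x)$ vanishes (and at $x=0$ the factor $x^{k-1}$ also vanishes, with $k\ge 2$). Hence $f_k(0)=f_k(1)=0$.

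For $f_k(\varphi_k)=0$ I would compute $r_k(\varphi_k)=h(\varphi_k^k)$ and use $\varphi_k^k=(1-\psi_k)^k=\psi_k=1-\varphi_k$ (the defining relation of $\psi_k$), so $r_k(\varphi_k)=h(1-\varphi_k)=h(\varphi_k)$ by the symmetry $h(x)=h(1-x)$. On the other hand $s_k(\varphi_k)=\varphi_k^{k-1}h(\varphi_k)=\alpha_k h(\varphi_k)$ by the definition $\alpha_k=\varphi_k^{k-1}$ in \eqref{e:basic}. Therefore $f_k(\varphi_k)=\alpha_k h(\varphi_k)-\alpha_k h(\varphi_k)=0$.

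For the derivative condition $f_k'(\varphi_k)=0$, I would first record the general derivative $h'(x)=\log\!\frac{1-x}{x}$ (valid on $(0,1)$, and in fact on the extended domain with $|\cdot|$ inside the logs), from which $r_k'(x)=k x^{k-1}h'(x^k)=kx^{k-1}\log\!\frac{1-x^k}{x^k}$ and $s_k'(x)=(k-1)x^{k-2}h(x)+x^{k-1}h'(x)$. Evaluating at $x=\varphi_k$ and again using $\varphi_k^k=1-\varphi_k$ (so $\frac{1-\varphi_k^k}{\varphi_k^k}=\frac{\varphi_k}{1-\varphi_k}$, i.e. $h'(\varphi_k^k)=\log\frac{\varphi_k}{1-\varphi_k}=-h'(\varphi_k)$), one gets
\[
r_k'(\varphi_k)=k\varphi_k^{k-1}h'(\varphi_k^k)=-k\alpha_k h'(\varphi_k),\qquad
s_k'(\varphi_k)=(k-1)\varphi_k^{k-2}h(\varphi_k)+\alpha_k h'(\varphi_k).
\]
Then $f_k'(\varphi_k)=\alpha_k r_k'(\varphi_k)-s_k'(\varphi_k)=-k\alpha_k^2 h'(\varphi_k)-(k-1)\varphi_k^{k-2}h(\varphi_k)-\alpha_k h'(\varphi_k)$, and it remains to check this vanishes. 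This is the only step with any content: I expect it to reduce, after substituting $\alpha_k=\varphi_k^{k-1}=\psi_k/\varphi_k$ and $h(\varphi_k)=-\varphi_k\log\varphi_k-\psi_k\log\psi_k$, $h'(\varphi_k)=\log(\psi_k/\varphi_k)=\log\alpha_k$, to an identity that follows purely from $\varphi_k+\psi_k=1$ and $\varphi_k^k=\psi_k$; collecting the coefficients of $\log\varphi_k$ and $\log\psi_k$ separately and using $\varphi_k^k=\psi_k$ should make both coefficients cancel. The main (minor) obstacle is just bookkeeping care with the several equivalent forms of $\alpha_k$ and with signs in $h'$; there is no genuine difficulty, since every relation needed is already encoded in \eqref{e:basic}.
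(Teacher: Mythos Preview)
Your proposal is correct and follows essentially the same approach as the paper's proof: direct verification at $0$, $1$, and $\varphi_k$ from the definitions and the relation $\varphi_k^k=\psi_k$, then an explicit derivative computation at $\varphi_k$ that collapses via the same relation. One small remark on your last step: after collecting terms the coefficients of $\log\varphi_k$ and $\log\psi_k$ do not vanish separately---you obtain $\varphi_k^{k-2}(k\log\varphi_k-\log\psi_k)$, which is zero precisely because $k\log\varphi_k=\log\varphi_k^k=\log\psi_k$; this is exactly the identity you anticipated, just packaged slightly differently than ``both coefficients cancel.''
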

\begin{proof}
	By assignment, $f_k(0)=f_k(1)=0$. We verify the remaining claims:
	\[
	f_k(\varphi_k)=\alpha_k h({\varphi_k}^k)-\alpha_k h(\varphi_k)=
	\alpha_k(h(1-\varphi_k)-h(\varphi_k))=0\,.
	\]
	For $x \in (0,1)$ we have
	\[
	f_k'(x) = \alpha_k kx^{k-1} \log (x^{-k}-1) 	-x^{k-2}((k (x - 1) + 1) \log(1-x) - k x \log x)
	\]
	so we must prove that
	$\alpha_k kx \log (x^{-k}-1) -(k (x - 1) + 1) \log(1-x) + k x \log x$
	vanishes at $x=\varphi_k$. Indeed, substituting $x$ with $\varphi_k$ in the last expression we obtain
	\begin{align*}
	& \; k \alpha_k\varphi_k \log ({\varphi_k}^{-k}-1) -(k (\varphi_k - 1) + 1) \log(1-\varphi_k) + k \varphi_k \log \varphi_k\\
	= & \; k(1-\varphi_k)(\log{\varphi_k}-\log(1-{\varphi_k})) - (k (\varphi_k - 1) + 1) \log(1-\varphi_k) + k \varphi_k\log \varphi_k\\
	= & \; k \log \varphi_k - \log(1-\varphi_k)\\
	= & \;\log(1-\varphi_k) - \log(1-\varphi_k)\\
	= & \; 0\,.
	\end{align*}
\end{proof}

\begin{lemma}\label{l:2}
	$f_k(x)$ is positive in $(0,\eps)$ for some small $\eps > 0$.
\end{lemma}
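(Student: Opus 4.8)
The plan is to determine the sign of $f_k$ near $0$ by isolating the leading term of its expansion as $x\to 0^+$. For $x\in(0,1)$ one has $x^k\in(0,1)$, hence
\[
r_k(x)=h(x^k)=-kx^k\log x-(1-x^k)\log(1-x^k),\qquad
s_k(x)=x^{k-1}h(x)=-x^k\log x-x^{k-1}(1-x)\log(1-x).
\]
The terms $-(1-x^k)\log(1-x^k)$ and $x^{k-1}(1-x)\log(1-x)$ are $O(x^k)$, so after dividing by the positive quantity $-x^k\log x$ they contribute only $O(1/\log x)=o(1)$, while the remaining pieces contribute $\alpha_k k$ and $1$ respectively. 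Thus
\[
\frac{f_k(x)}{-x^k\log x}=\alpha_k\cdot\frac{r_k(x)}{-x^k\log x}-\frac{s_k(x)}{-x^k\log x}\longrightarrow \alpha_k k-1\qquad (x\to 0^+).
\]
Consequently, if $k\alpha_k>1$, then $f_k(x)/(-x^k\log x)$ is bounded below by a positive constant on a punctured right neighbourhood of $0$, and since $-x^k\log x>0$ on $(0,1)$ this yields $f_k(x)>0$ on some interval $(0,\eps)$, as required.

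It therefore remains to prove $k\alpha_k>1$. By \eqref{e:basic} we have $\alpha_k=\psi_k/\varphi_k$ with $\varphi_k=1-\psi_k$, so the inequality is equivalent to $k\psi_k>1-\psi_k$, i.e.\ $\psi_k>\frac1{k+1}$. Now $g(x)\coloneqq(1-x)^k-x$ satisfies $g'(x)=-k(1-x)^{k-1}-1<0$ on $[0,1]$, and $\psi_k$ is its unique root there, so $\psi_k>\frac1{k+1}$ is equivalent to $g\!\left(\frac1{k+1}\right)>0$, i.e.\ to $\left(\frac{k}{k+1}\right)^k>\frac1{k+1}$, i.e.\ to $k^k>(k+1)^{k-1}$. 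Taking logarithms, this is $\log k>(k-1)\log\!\left(1+\frac1k\right)$, which follows from $\log(1+\frac1k)\le\frac1k$ together with the elementary inequality $\frac{k-1}{k}<\log k$, valid for every $k\ge 2$ (checked directly for $k=2$, and clear for $k\ge 3$ since then $\frac{k-1}{k}<1\le\log 3\le\log k$).

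The only step that needs a little care is the rigorous justification of the displayed limit — namely that $-(1-x^k)\log(1-x^k)$ and $x^{k-1}(1-x)\log(1-x)$ are $O(x^k)$, which is immediate from $0\le -(1-t)\log(1-t)\le t$ for $t\in[0,1)$. Beyond this routine estimate there is no genuine obstacle; the substance of the lemma is the inequality $\psi_k>\frac1{k+1}$, which the monotonicity of $g$ reduces to the clean numerical fact $k^k>(k+1)^{k-1}$.
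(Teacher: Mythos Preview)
Your proof is correct and follows essentially the same approach as the paper: both isolate the dominant $x^k\log(1/x)$ behaviour of $f_k$ near $0$ and reduce positivity to the inequality $\alpha_k>1/k$, equivalently $\varphi_k<k/(k+1)$ (i.e.\ $(k/(k+1))^k>1/(k+1)$). Your presentation is slightly cleaner---computing the limit of $f_k(x)/(-x^k\log x)$ directly rather than sandwiching $h$ with Taylor bounds---and you also supply a short proof of the key numerical inequality that the paper simply asserts.
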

\begin{proof}
	The Taylor expansion of $\log(1-\eps)$ gives that for all $\eps \in (0,1)$,
	$$
	\eps\left(\log \frac{1}{\eps}+1-\eps\right) \le h(\eps) \le \eps\left(\log \frac{1}{\eps} +1\right)\,.
	$$
	We therefore have
	\begin{align*}
	\alpha_ k r_k(\epsilon) & = \alpha_ k h(\eps^k) \ge \alpha_k \eps^k \left(\log \frac{1}{\eps^k} + 1-\eps^k\right);\\
	s_k(\epsilon) & = \eps^{k-1} h(\eps) \le \eps^k\left(\log \frac{1}{\eps} +1\right).
	\end{align*}
	Dividing both inequalities by $\eps^k$ it remains to prove that for small $\eps > 0$,
	$$
	\alpha_k\left(\log \frac{1}{\eps^k} + 1-\eps^k\right) > \left(\log \frac{1}{\eps} +1\right)\,.
	$$
	Equivalently, we must show that for small $\eps > 0$,
	$$
	\alpha_k > \frac{\log \frac{1}{\eps} +1}{k\log \frac{1}{\eps} + 1 -\eps^k}\;.
	$$
	Since $\alpha_k = \frac{1}{\varphi_k}-1$  it suffices to show that for small $\eps > 0$,
	$$
	\varphi_k < \frac{k\log \frac{1}{\eps} + 1 -\eps^k}{(k+1)\log \frac{1}{\eps} + 2 -\eps^k}\;.
	$$
	We will show the stronger statement that for small $\eps > 0$,
	$$
	\varphi_k < \frac{k\log \frac{1}{\eps}}{(k+1)\log \frac{1}{\eps} + 2}\;.
	$$
	Indeed, notice that since $(1-1/(k+1))^k > 1/(k+1)$, we have that $\varphi_k < k/(k+1)$,
	so for some $0 < \delta < 1$ we have $\varphi_k = \delta k/(k+1)$.
	We may therefore choose  $\eps > 0$ sufficiently small such that
	$$
	\frac{k\log \frac{1}{\eps}}{(k+1)\log \frac{1}{\eps} + 2} > \frac{\delta k}{k+1}=\varphi_k\;.
	$$
\end{proof}

The derivatives of $h(x)$ in $(-1,1) \setminus \{0\}$ are required for the next two lemmas.
By induction, it holds that:
\begin{align}
h'(x) & = \log\left(\frac{1-x}{|x|} \right); \label{e:1}\\
h^{(t)}(x) & = (t-2)!(-1)^t\left( \frac{1}{(x-1)^{t-1}} - \frac{1}{x^{t-1}}\right) ~{\rm for~all}~ t \ge 2\;. \label{e:2}
\end{align}

\begin{lemma}\label{l:3}
	Let $t \ge 0$.\\
	(i) The $t$'th derivative of $s_k(x)$ in $(-1,1) \setminus \{0\}$ is
	\[
	s_k(x)^{(t)} = \sum_{j=0}^{t} h^{(j)}(x) \binom{k-1}{t-j}\frac{t!}{j!}x^{k-t+j-1}\;.
	\]
	(ii) For all $0 \le t \le k-1$, $s_k(0)^{(t)}=0$.\\
	(iii) $s_k(x)^{(k+1)}$ is a rational function in $(0,1)$ given by:
	\begin{align*}
		s_k(x)^{(k+1)}
		& = \sum_{j=0}^{k-1} (-1)^j(k-1)!\binom{k+1}{j+2}\left(\frac{x^{j+1}-(x-1)^{j+1}}{x(x-1)^{j+1}}\right).
	\end{align*}
\end{lemma}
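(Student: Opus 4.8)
The plan is to obtain all three parts from a single application of Leibniz's rule to the factorization $s_k(x)=x^{k-1}\cdot h(x)$, together with the explicit formulas \eqref{e:1}--\eqref{e:2} for the derivatives of $h$. For part (i), note that the only singularity of $-x\log|x|-(1-x)\log|1-x|$ inside $(-1,1)$ is at $0$, so $h\in C^\infty\big((-1,1)\setminus\{0\}\big)$ and Leibniz's rule gives
$$s_k^{(t)}(x)=\sum_{j=0}^{t}\binom{t}{j}\,h^{(j)}(x)\,\frac{d^{\,t-j}}{dx^{\,t-j}}x^{k-1}=\sum_{j=0}^{t}\binom{t}{j}\frac{(k-1)!}{(k-1-t+j)!}\,x^{\,k-1-t+j}\,h^{(j)}(x),$$
with the convention that the $j$-th summand is $0$ when $k-1-t+j<0$. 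Rewriting $\binom{t}{j}\frac{(k-1)!}{(k-1-t+j)!}=\frac{t!}{j!}\cdot\frac{(k-1)!}{(t-j)!\,(k-1-t+j)!}=\frac{t!}{j!}\binom{k-1}{t-j}$, with the zero convention now carried by $\binom{k-1}{t-j}=0$ for $t-j>k-1$, yields exactly the asserted formula.

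For part (ii), I would split $s_k=p-q$ on $(-1,1)$, where $p(x)\coloneqq-x^k\log|x|$ and $q(x)\coloneqq x^{k-1}(1-x)\log(1-x)$. The function $q$ is real-analytic on $(-1,1)$, and since $(1-x)\log(1-x)$ has a simple zero at $x=0$, $q$ has a zero of order $k$ there; hence $q^{(t)}(0)=0$ for $0\le t\le k-1$. For $p$, one checks by induction that $p^{(t)}(x)=x^{k-t}\big(c_t\log|x|+d_t\big)$ on $(-1,1)\setminus\{0\}$ for $0\le t\le k$, with suitable constants $c_t,d_t$; for $t\le k-1$ this tends to $0$ as $x\to0$, so $p^{(t)}$ extends continuously by $p^{(t)}(0)\coloneqq0$, and a routine induction (if $p^{(t-1)}$ is continuous near $0$ with $p^{(t-1)}(0)=0$, then $p^{(t-1)}(x)/x=p^{(t)}(\xi_x)\to0$ by the mean value theorem, so $p^{(t)}(0)$ exists and equals $0$) makes this rigorous. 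Adding, $s_k^{(t)}(0)=0$ for $0\le t\le k-1$. (Equivalently one could plug $t\le k-1$ into the formula of part (i) and check termwise that $h^{(j)}(x)\,x^{k-1-t+j}\to0$, using $h(x)=O(|x|\log\tfrac1{|x|})$, $h'(x)=O(\log\tfrac1{|x|})$ and $h^{(j)}(x)=O(|x|^{-(j-1)})$ for $j\ge2$ from \eqref{e:2}.)

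For part (iii), I would specialize part (i) to $t=k+1$, so the exponent becomes $k-1-t+j=j-2$:
$$s_k^{(k+1)}(x)=\sum_{j=0}^{k+1}\frac{(k+1)!}{j!}\binom{k-1}{k+1-j}\,x^{\,j-2}\,h^{(j)}(x).$$
Here $\binom{k-1}{k+1-j}$ vanishes unless $2\le j\le k+1$, so only those terms remain, and for each such $j\ (\ge2)$ I substitute $h^{(j)}(x)=(j-2)!(-1)^{j}\big(\tfrac{1}{(x-1)^{j-1}}-\tfrac{1}{x^{j-1}}\big)$ from \eqref{e:2}. Re-indexing by $i=j-2\in\{0,\dots,k-1\}$, the constant collapses via $i!\,\binom{k-1}{i}\tfrac{(k+1)!}{(i+2)!}=(k-1)!\,\binom{k+1}{i+2}$, the sign is $(-1)^{j}=(-1)^{i}$, and $x^{i}\big(\tfrac{1}{(x-1)^{i+1}}-\tfrac{1}{x^{i+1}}\big)=\tfrac{x^{i+1}-(x-1)^{i+1}}{x(x-1)^{i+1}}$; assembling these gives the claimed rational expression.

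Parts (i) and (iii) are essentially bookkeeping. The only step needing genuine care is part (ii): one must verify that $s_k$ really is $(k-1)$-times differentiable \emph{at} the singular point $x=0$ with all those derivatives equal to $0$ --- it is not enough that the terms of the part-(i) formula tend to $0$ away from the origin --- so the small mean-value-theorem induction (equivalently, the analytic-plus-$x^{k}\log|x|$ decomposition above) is where the real work lies.
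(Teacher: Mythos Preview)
Your argument is correct, and for parts (i) and (iii) it coincides with the paper's proof almost verbatim: Leibniz/product rule for (i), then specializing to $t=k+1$, killing the $j=0,1$ terms via the binomial coefficient, substituting \eqref{e:2}, and reindexing for (iii). The only genuine difference is in part (ii). The paper simply plugs $t\le k-1$ into the formula of part (i) and checks that each summand tends to $0$ as $x\to 0$ --- exactly the alternative you sketch in your parenthetical remark. Your primary route instead splits $s_k=p-q$ with $q$ analytic of vanishing order $k$ at $0$ and $p(x)=-x^k\log|x|$ handled by an explicit induction plus the mean-value-theorem step. Both approaches are fine; yours has the merit of making explicit why the limits of the $s_k^{(t)}$ actually yield derivatives \emph{at} $x=0$ (the paper silently relies on the standard fact that if $f^{(t-1)}$ is continuous at $0$ and $\lim_{x\to 0}f^{(t)}(x)$ exists, then $f^{(t)}(0)$ exists and equals that limit). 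The paper's route is a bit shorter since it reuses (i) directly rather than introducing a new decomposition.
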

\begin{proof}
	Recall that $s_k(x)=x^{k-1}h(x)$ so (i) is obtained directly by induction and the product rule.
	
	As for (ii), notice first that $s_k(0)=0$.
	Now, suppose $1 \le t \le k-1$, and consider the limit of (i) as $x$ goes to $0$.
	We compute this limit for each term $j$ separately.
	The term corresponding to $j=0$ is just a constant multiple of $h(x)x^{k-t-1}$ so it goes to $0$.
	By \eqref{e:1}, the term corresponding to $j=1$ is a constant multiple of
	$$
	\log\left(\frac{1-x}{|x|} \right)x^{k-t}
	$$
	and since $k -t > 0$, it goes to zero.
	By \eqref{e:2}, the term corresponding to $2 \le j \le t$ is a constant multiple of
	$$
	\left( \frac{1}{(x-1)^{j-1}} - \frac{1}{x^{j-1}}\right)x^{k-t+j-1}= \frac{x^{k-t+j-1}}{(x-1)^{j-1}}-x^{k-t}
	$$
	and since $k -t > 0$, it goes to zero as well.
	
	As for (iii), observe that by (i), the terms involving $h(x)$ and $h'(x)$ vanish, so we are left with
	a rational function, explicitly given by
	\begin{align*}
		s_k(x)^{(k+1)} & = \sum_{j=0}^{k-1} h^{(j+2)}(x) \binom{k-1}{k-1-j}\frac{(k+1)!}{(j+2)!}x^{j}\\
		& = \sum_{j=0}^{k-1} (-1)^j j!\left( \frac{1}{(x-1)^{j+1}} - \frac{1}{x^{j+1}}\right) \binom{k-1}{k-1-j}\frac{(k+1)!}{(j+2)!}x^{j}\\
		& = \sum_{j=0}^{k-1} (-1)^j(k-1)!\binom{k+1}{j+2}\left(\frac{x^{j+1}-(x-1)^{j+1}}{x(x-1)^{j+1}}\right)\;.
	\end{align*}
\end{proof}
	
\begin{lemma}\label{l:4}
	Let $t \ge 0$.\\
	(i) The $t$'th derivative of $r_k(x)$ in $(-1,1) \setminus \{0\}$ is
	\[
    r_k(x)^{(t)} = \sum_{j=0}^{t} (k-1)!C(k,t,j)h^{(j)}(x^k)x^{kj-t}
    \]
    where the coefficient $C(k,t,j)$ satisfies $C(k,0,0)=1/(k-1)!$, otherwise $C(k,t,j)=0$ if $t\cdot j=0$ and otherwise
    \[C(k,t,j)=(kj-t+1)C(k,t-1,j)+kC(k,t-1,j-1)\;.\]
	(ii) For all $0 \le t \le k-1$, $r_k(0)^{(t)}=0$.\\
	(iii) $r_k(x)^{(k+1)}$ is a rational function in $(0,1)$ given by:
	\begin{align*}
	r_k(x)^{(k+1)} & =  \sum_{j=0}^{k-1} (-1)^j j! (k-1)!C(k,k+1,j+2) \left( \frac{x^{kj+k}-(x^k-1)^{j+1}}{x(x^k-1)^{j+1}}\right).
	\end{align*}
\end{lemma}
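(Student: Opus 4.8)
The plan is to follow the template of the proof of Lemma~\ref{l:3}, replacing the product rule used there for $s_k(x)=x^{k-1}h(x)$ by the chain rule (together with the product rule) for $r_k(x)=h(x^k)$. For part~(i) I would induct on $t$. The base case $t=0$ is immediate, since $(k-1)!\,C(k,0,0)=1$ recovers $r_k(x)=h(x^k)$. For the inductive step, differentiate the claimed formula for $r_k(x)^{(t)}$ termwise on $(-1,1)\setminus\{0\}$: each summand $(k-1)!\,C(k,t,j)\,h^{(j)}(x^k)\,x^{kj-t}$ contributes a chain-rule term $(k-1)!\,k\,C(k,t,j)\,h^{(j+1)}(x^k)\,x^{k(j+1)-(t+1)}$ and a power-rule term $(k-1)!\,(kj-t)\,C(k,t,j)\,h^{(j)}(x^k)\,x^{kj-(t+1)}$. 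Collecting the coefficient of $h^{(m)}(x^k)\,x^{km-(t+1)}$ over all $j$ yields $(k-1)!\big[\,k\,C(k,t,m-1)+(km-t)C(k,t,m)\,\big]$, which is precisely $(k-1)!\,C(k,t+1,m)$ by the stated recurrence (with the convention $C(k,t,-1)=0$); the summation range passes from $\{0,\dots,t\}$ to $\{0,\dots,t+1\}$, and the $m=0$ coefficient is consistently $0$.

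Part~(ii) I would handle exactly as in Lemma~\ref{l:3}(ii): $r_k$ is continuous on $\R$ with $r_k(0)=h(0)=0$, and one inducts on $0\le t\le k-1$, at each step showing $\lim_{x\to0}r_k(x)^{(t)}=0$ and then invoking the mean value theorem to conclude that $r_k^{(t)}(0)$ exists and equals $0$ (so that $r_k^{(t)}$ is continuous at $0$, which feeds the next step). The limit is taken term by term in the formula of part~(i): the $j=0$ summand vanishes identically because $C(k,t,0)=0$ for $t\ge1$, while for $j\ge1$ every exponent $kj-t$ is positive (indeed $k-t\ge1$ when $j=1$, and $kj-t\ge k+1$ when $j\ge2$), so by \eqref{e:1} and \eqref{e:2} each term is a constant times a positive power of $x$ multiplied by a logarithmic or rational factor bounded near $0$, hence tends to $0$.

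For part~(iii) I would put $t=k+1$ in part~(i). The $j=0$ term disappears because $C(k,k+1,0)=0$, and the $j=1$ term disappears because iterating the recurrence gives $C(k,t,1)=(k-t+1)C(k,t-1,1)$ for $t\ge2$, so that $C(k,k+1,1)$ carries the vanishing factor $k-(k+1)+1=0$; thus only $j\ge2$ contributes. Substituting \eqref{e:2} for $h^{(j)}(x^k)$, reindexing $j\mapsto j+2$, using $(-1)^{j+2}=(-1)^j$ and $k(j+2)-(k+1)=kj+k-1$, and combining $\frac{x^{kj+k-1}}{(x^k-1)^{j+1}}-\frac1x$ over the common denominator $x(x^k-1)^{j+1}$ gives the asserted rational expression. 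I expect the only real care to be needed here in part~(iii): the sign and exponent bookkeeping in the substitution, together with the small observation that the $h$- and $h'$-terms drop out — this being the analog of the identities $\binom{k-1}{k+1}=\binom{k-1}{k}=0$ that kill the corresponding terms in Lemma~\ref{l:3}(iii). Everything else is the routine induction of part~(i) and the limit argument already carried out for $s_k$.
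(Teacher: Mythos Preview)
Your proposal is correct and follows essentially the same approach as the paper's proof: induction with the chain and product rules for~(i), a term-by-term limit as $x\to 0$ for~(ii), and elimination of the $j=0,1$ terms (you obtain $C(k,k+1,1)=0$ from the recurrence; the paper equivalently notes $C(k,t,1)=k/(k-t)!$) followed by substitution of~\eqref{e:2} for~(iii). The only minor imprecision is the phrase ``bounded near $0$'' in your treatment of~(ii) --- neither the logarithmic factor $h'(x^k)$ nor the piece $1/x^{k(j-1)}$ of $h^{(j)}(x^k)$ is bounded there, but after multiplying by $x^{kj-t}$ and simplifying (exactly as the paper does) each resulting expression carries a positive power of $x$, so the limits are $0$ as you claim.
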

\begin{proof}
	Recall that $r_k(x)=h(x^k)$ so (i) is obtained directly by induction, the product rule, and the definition
	of the coefficients $C(k,t,j)$. We note that there is no simple ``sum-free'' expression in the general case of $C(k,t,j)$ (e.g., $(k-1)!C(k,6,2)=k^2(k-1)(k-2)(31k^2-132k+137)$), but notice that we do have that for all $1 \le t \le k$,
	$$
	C(k,t,1) = \frac{k}{(k-t)!}
	$$
	and hence $C(k,t,1)=0$ for all $t > k$.
	Also notice that since $C(k,t,j)=0$ when exactly one of $t$ or $j$ is zero, we inductively have that when $0 \le t < j$, 
	$$
	C(k,t,j) = 0\;.
	$$
	As for (ii), notice first that $r_k(0)=0$.
	Now, suppose $1 \le t \le k-1$, and consider the limit of (i) as $x$ goes to $0$.
	We compute this limit for each term $j$ separately.
	The term corresponding to $j=0$ is just $0$.
	By \eqref{e:1}, the term corresponding to $j=1$ is a constant multiple of
	$$
	\log\left(\frac{1-x^k}{|x^k|} \right)x^{k-t}
	$$
	and since $k -t > 0$, it goes to zero.
	By \eqref{e:2}, the term corresponding to $2 \le j \le t$ is a constant multiple of
	$$
	\left( \frac{1}{(x^k-1)^{j-1}} - \frac{1}{x^{kj-k}}\right)x^{kj-t}= \frac{x^{kj-t}}{(x^k-1)^{j-1}}-x^{k-t}
	$$
	and since $k -t > 0$, it goes to zero as well.
	
	As for (iii), observe that by (i), and since $C(k,t,1)=0$ for all $t > k$, we see that in $r_k(x)^{(k+1)}$, the terms involving $h(x)$ and $h'(x)$ vanish, so we are left with a rational function explicitly given by
	\begin{align*}
		r_k(x)^{(k+1)} & = \sum_{j=0}^{k-1} h^{(j+2)}(x^k)(k-1)!C(k,k+1,j+2)x^{kj+k-1}\\
		& = \sum_{j=0}^{k-1} (-1)^j j!\left( \frac{1}{(x^k-1)^{j+1}} - \frac{1}{x^{kj+k}}\right)(k-1)!C(k,k+1,j+2)x^{kj+k-1}\\
		& = \sum_{j=0}^{k-1} (-1)^j j! (k-1)!C(k,k+1,j+2)
		\left( \frac{x^{kj+k}-(x^k-1)^{j+1}}{x(x^k-1)^{j+1}}\right).
	\end{align*}
\end{proof}

\noindent
The following corollary is immediate from Lemma \ref{l:3} item (ii) and Lemma \ref{l:4} item (ii).
\begin{corollary}\label{coro:zero-root}
	$f_k(x)$ has a root of multiplicity $k$ at $x=0$.
\end{corollary}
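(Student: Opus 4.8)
The plan is to read the statement directly off the two cited lemmas together with the linearity of differentiation. Write $f_k = \alpha_k r_k - s_k$ as in \eqref{e:functions}. By Lemma \ref{l:3}(ii) we have $s_k^{(t)}(0) = 0$ for all $0 \le t \le k-1$, and by Lemma \ref{l:4}(ii) we have $r_k^{(t)}(0) = 0$ for all $0 \le t \le k-1$; in particular all of these one-sided derivatives exist. Since differentiation is linear, $f_k^{(t)}(0) = \alpha_k\, r_k^{(t)}(0) - s_k^{(t)}(0) = 0$ for every $0 \le t \le k-1$. Hence $x = 0$ is a zero of $f_k$ of multiplicity at least $k$ (the case $t = 0$ is also recorded in Lemma \ref{l:1}).

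To see that the multiplicity is exactly $k$, I would expand $f_k$ near $0$ using $h(y) = -y\log y + y + O(y^2)$ as $y \to 0^+$. This yields $s_k(x) = x^{k-1}h(x) = -x^k\log x + x^k + O(x^{k+1})$ and $\alpha_k r_k(x) = \alpha_k h(x^k) = -k\alpha_k x^k\log x + \alpha_k x^k + O(x^{2k})$, so that
\[
f_k(x) = (1 - k\alpha_k)\,x^k\log x + (\alpha_k - 1)\,x^k + O(x^{k+1}) \qquad (x \to 0^+).
\]
The leading coefficient is nonzero: since $\varphi_k < k/(k+1)$ (established in the proof of Lemma \ref{l:2}) we have $\psi_k = 1-\varphi_k > 1/(k+1)$, hence $k\alpha_k = k\psi_k/\varphi_k > 1$ and $1 - k\alpha_k < 0$. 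Consequently $f_k(x)/x^{k+1} \to +\infty$ as $x \to 0^+$, so $0$ is not a zero of order $k+1$ and the multiplicity is exactly $k$. (Incidentally $f_k(x)/x^k \to +\infty$, which re-proves Lemma \ref{l:2}.)

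I do not anticipate a genuine obstacle: the entire content sits in Lemmas \ref{l:3}(ii) and \ref{l:4}(ii), and the expansion above is routine. The one point worth flagging is that, owing to the $x^k\log x$ term, $f_k$ is not $k$-times differentiable at $0$ in the ordinary sense, so ``multiplicity $k$'' here should be read as ``$f_k$ and its first $k-1$ derivatives vanish at $0$, while $f_k$ does not vanish to order $k+1$''---which is exactly what the two steps above establish. If only the weaker statement ``multiplicity at least $k$'' is needed downstream, the first paragraph alone suffices.
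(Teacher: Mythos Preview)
Your first paragraph is exactly the paper's proof: the corollary is stated as ``immediate from Lemma~\ref{l:3}(ii) and Lemma~\ref{l:4}(ii)'', and that is all the paper says. Your additional work on the exact multiplicity and the $x^k\log x$ caveat is correct and well observed, but the paper does not supply it---downstream, in Lemma~\ref{l:6}, only ``multiplicity at least $k$'' is actually used (the Rolle count then forces equality), so your closing remark that the first paragraph alone suffices is precisely on target.
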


\noindent 
The following corollary follows from Lemma \ref{l:3} item (iii) and Lemma \ref{l:4} item (iii).
\begin{corollary}\label{coro:pk}
	The $(k+1)$'th derivative of $f_k(x)$ in $(0,1)$ is a rational function of the form
	$(k-1)!p_k(x)/(x(x^k-1)^{k})$ where $p(x)$ is a polynomial of degree $k^2-1$ given by
	$$
	p_k(x) =  \alpha_k\rho_k(x) - \sigma_k(x)
	$$
	where
	\begin{align*}
	\rho_k(x) & = \sum_{j=0}^{k-1} (-1)^j j! C(k,k+1,j+2) \left( (x^k-1)^{k-j-1}x^{kj+k} - (x^k-1)^k \right);\\
	\sigma_k(x) & = \sum_{j=0}^{k-1} (-1)^j \binom{k+1}{j+2}
	\left(x^{j+1}(x-1)^{k-j-1}(1+x+\cdots+x^{k-1})^k - (x^k-1)^k \right)
	\end{align*}
	and where the coefficient $C(k,t,j)$ satisfies $C(k,0,0)=1/(k-1)!$, $C(k,t,0)=0$ if $t > 0$ and otherwise
	$$
	C(k,t,j)=(kj-t+1)C(k,t-1,j)+kC(k,t-1,j-1)\;.
	$$
\end{corollary}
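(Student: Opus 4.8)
The plan is to derive the $(k+1)$-th derivative of $f_k$ directly from the two rational-function formulas already in hand, using linearity of differentiation together with a single common-denominator computation. Since $f_k = \alpha_k r_k - s_k$ and $\alpha_k$ is a constant, $f_k(x)^{(k+1)} = \alpha_k\, r_k(x)^{(k+1)} - s_k(x)^{(k+1)}$ on $(0,1)$, so it suffices to rewrite the formulas of Lemma~\ref{l:3}(iii) and Lemma~\ref{l:4}(iii) over the common denominator $x(x^k-1)^k$. The algebraic identity I would use throughout is $x^k-1 = (x-1)(1+x+\cdots+x^{k-1})$, and hence $(x^k-1)^k = (x-1)^k(1+x+\cdots+x^{k-1})^k$.

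For $s_k(x)^{(k+1)}$: the $j$-th summand in Lemma~\ref{l:3}(iii) has denominator $x(x-1)^{j+1}$, so I would multiply its numerator and denominator by $(x-1)^{k-j-1}(1+x+\cdots+x^{k-1})^k$. The denominator becomes $x(x^k-1)^k$, while, using $(x-1)^{j+1}(x-1)^{k-j-1}(1+x+\cdots+x^{k-1})^k = (x^k-1)^k$, the numerator becomes
$$
(-1)^j(k-1)!\binom{k+1}{j+2}\left(x^{j+1}(x-1)^{k-j-1}(1+x+\cdots+x^{k-1})^k - (x^k-1)^k\right),
$$
which is exactly the $j$-th term of $(k-1)!\,\sigma_k(x)$; summing over $j$ gives $s_k(x)^{(k+1)} = (k-1)!\,\sigma_k(x)\big/\big(x(x^k-1)^k\big)$. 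For $r_k(x)^{(k+1)}$: the $j$-th summand in Lemma~\ref{l:4}(iii) has denominator $x(x^k-1)^{j+1}$, so multiplying its numerator and denominator by $(x^k-1)^{k-j-1}$ yields denominator $x(x^k-1)^k$ and numerator $(-1)^j j!\,(k-1)!\,C(k,k+1,j+2)\big(x^{kj+k}(x^k-1)^{k-j-1} - (x^k-1)^k\big)$, the $j$-th term of $(k-1)!\,\rho_k(x)$. Combining, $f_k(x)^{(k+1)} = (k-1)!\big(\alpha_k\rho_k(x) - \sigma_k(x)\big)\big/\big(x(x^k-1)^k\big) = (k-1)!\,p_k(x)\big/\big(x(x^k-1)^k\big)$.

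It then remains to check that $p_k$ is a polynomial of degree $k^2-1$. Each bracketed expression in $\sigma_k$ and in $\rho_k$ is a difference of two polynomials of degree $k^2$ whose leading coefficients both equal $1$ --- indeed $\deg\big(x^{j+1}(x-1)^{k-j-1}(1+x+\cdots+x^{k-1})^k\big) = (j+1)+(k-j-1)+k(k-1) = k^2$, and likewise $\deg\big(x^{kj+k}(x^k-1)^{k-j-1}\big) = k^2$ --- so each such difference is a polynomial of degree at most $k^2-1$, hence so is $p_k$. To pin the degree down exactly, note that every bracketed term of $\rho_k$ is a polynomial in $x^k$ in which the top term $(x^k)^k$ cancels, so $\rho_k$ has degree at most $k^2-k$ and contributes nothing to the coefficient of $x^{k^2-1}$; a short computation shows that the coefficient of $x^{k^2-1}$ in the $j$-th bracketed term of $\sigma_k$ equals $j+1$, so the coefficient of $x^{k^2-1}$ in $p_k$ is $-\sum_{j=0}^{k-1}(-1)^j\binom{k+1}{j+2}(j+1)$, which equals $-1$ by a routine binomial identity (substitute $i=j+2$ and use $\sum_i(-1)^i\binom{k+1}{i}=0$ and $\sum_i(-1)^i\, i\binom{k+1}{i}=0$). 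Hence $\deg p_k = k^2-1$. The entire argument is pure bookkeeping; the step I would be most careful with is this final leading-coefficient evaluation, since a priori the alternating sum determining the top coefficient of $\sigma_k$ could have vanished, in which case one would have to descend to the next nonzero coefficient.
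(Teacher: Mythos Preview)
Your proof is correct and follows essentially the same route as the paper: apply linearity to $f_k^{(k+1)}=\alpha_k r_k^{(k+1)}-s_k^{(k+1)}$, bring both rational expressions from Lemmas~\ref{l:3}(iii) and~\ref{l:4}(iii) over the common denominator $x(x^k-1)^k$ via the factorisation $x^k-1=(x-1)(1+\cdots+x^{k-1})$, and read off $\rho_k$ and $\sigma_k$. You go slightly further than the paper does at this point by explicitly computing the leading coefficient of $p_k$ to be $-1$ (the paper only asserts $\deg\sigma_k=k^2-1$ here and defers that exact computation to Lemma~\ref{l:5}); your verification of the identity $\sum_{j=0}^{k-1}(-1)^j\binom{k+1}{j+2}(j+1)=1$ and of the second-highest coefficient $j+1$ in each bracketed term of $\sigma_k$ are both correct.
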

\begin{proof}
	By Lemma \ref{l:3} item (iii) and Lemma \ref{l:4} item (iii) we obtain that
	$$
	f_k(x)^{(k+1)}  = \sum_{j=0}^{k-1} \alpha_k(-1)^j j! (k-1)!C(k,k+1,j+2)
	\left( \frac{x^{kj+k}-(x^k-1)^{j+1}}{x(x^k-1)^{j+1}}\right)-
	$$
	$$
	\sum_{j=0}^{k-1} (-1)^j(k-1)!\binom{k+1}{j+2}\left(\frac{x^{j+1}-(x-1)^{j+1}}{x(x-1)^{j+1}}\right)\;.
	$$
	The common denominator of all terms is $x(x^k-1)^k$, so  $f_k(x)^{(k+1)} = (k-1)!p_k(x)/(x(x^k-1)^k)$ where
	$$
	p_k(x) =  \alpha_k\rho_k(x) - \sigma_k(x)
	$$
	and where $\rho_k(x)$ and $\sigma_k(x)$ are as defined is the statement of the corollary.
	Notice that $\sigma_k(x)$ is of degree $k^2-1$ and $\rho_k(x)$ is of degree $k^2-k$,
	so $p_k(x)$ is of degree $k^2-1$.
\end{proof}

\noindent
{\em Note:} setting $x^k=y$ we can rewrite $\rho_k(x)$ as
$$
\rho_k(x) = \sum_{j=0}^{k-1} (-1)^j j! C(k,k+1,j+2) \left( (y-1)^{k-j-1}y^{j+1} - (y-1)^k \right).
$$
Written in this way, the coefficients of $\rho_k(x)$ are closely related to
OEIS A108267 \cite{OEIS} (the latter having no ``sum free'' expression as well)
and shows that $\rho_k(x)$ has exactly $k$ nonzero terms.
It is also not too difficult to show that all terms of $\sigma_k(x)$ but one, have the same sign.
So, by using Descartes' rule of signs, we already have that $p_k(x)$ has at most $2k+2$ positive roots.
However, we require a stronger statement.

\begin{lemma}\label{l:5}
	The leading coefficient of $p_k(x)$ is $-1$. If $k$ is odd then $p_k(0) > 0$,
	otherwise $p_k(0) < 0$. In particular, $p_k(x)$ has at least one negative root.
\end{lemma}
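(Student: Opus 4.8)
The plan is to establish the three assertions of the lemma in order --- the leading coefficient of $p_k$, the sign of $p_k(0)$, and then the existence of a negative root --- the last being an immediate consequence of the first two together with the fact (part of Corollary~\ref{coro:pk}) that $\deg p_k=k^2-1$. For the \emph{leading coefficient}: by Corollary~\ref{coro:pk} we have $\deg\sigma_k=k^2-1$ and $\deg\rho_k=k^2-k$, and since $k\ge2$ the latter is strictly smaller, so the leading coefficient of $p_k=\alpha_k\rho_k-\sigma_k$ equals minus that of $\sigma_k$. In each summand of $\sigma_k$ the two products $x^{j+1}(x-1)^{k-j-1}(1+x+\cdots+x^{k-1})^k$ and $(x^k-1)^k$ both have degree $k^2$ with leading coefficient $1$, so the $x^{k^2}$ terms cancel; expanding one step further, $x^{j+1}(x-1)^{k-j-1}=x^k-(k-j-1)x^{k-1}+\cdots$ and $(1+x+\cdots+x^{k-1})^k=x^{k(k-1)}+kx^{k(k-1)-1}+\cdots$, so the coefficient of $x^{k^2-1}$ in the summand is $k-(k-j-1)=j+1$ (while $(x^k-1)^k$ contributes nothing to $x^{k^2-1}$ when $k\ge2$). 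Hence the leading coefficient of $\sigma_k$ is $\sum_{j=0}^{k-1}(-1)^j(j+1)\binom{k+1}{j+2}$, which after reindexing $i=j+2$ equals $\sum_{i=2}^{k+1}(-1)^i(i-1)\binom{k+1}{i}$; since $\sum_{i=0}^{k+1}(-1)^i\binom{k+1}{i}=\sum_{i=0}^{k+1}i(-1)^i\binom{k+1}{i}=0$ (as $k+1\ge3$), the sum over $i=0,\dots,k+1$ of $(-1)^i(i-1)\binom{k+1}{i}$ vanishes, and subtracting the $i=0$ and $i=1$ terms ($-1$ and $0$) leaves value $1$. So the leading coefficient of $p_k$ is $-1$.

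For the \emph{sign of $p_k(0)$} I would read it off from the simple pole of $f_k^{(k+1)}$ at $x=0$, computed two ways. On $(0,1)$ one has $h(x^k)=-kx^k\log x-(1-x^k)\log(1-x^k)$ and $x^{k-1}h(x)=-x^k\log x-x^{k-1}(1-x)\log(1-x)$, so
\[
f_k(x)=(1-k\alpha_k)\,x^k\log x+g(x),\qquad g(x)=-\alpha_k(1-x^k)\log(1-x^k)+x^{k-1}(1-x)\log(1-x),
\]
where $g$ is real-analytic on $(-1,1)$. Since $\frac{d^{k+1}}{dx^{k+1}}(x^k\log x)=k!/x$ and $g^{(k+1)}$ is bounded near $0$, we get $f_k^{(k+1)}(x)=(1-k\alpha_k)k!/x+O(1)$ as $x\to0^+$; comparing with $f_k^{(k+1)}(x)=(k-1)!\,p_k(x)/(x(x^k-1)^k)$ from Corollary~\ref{coro:pk}, whose residue at $0$ is $(-1)^k(k-1)!\,p_k(0)$, and equating residues yields $p_k(0)=(-1)^k k\,(1-k\alpha_k)$. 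Finally $\alpha_k=\frac{1}{\varphi_k}-1>\frac{1}{k}$, because $\varphi_k<\frac{k}{k+1}$ as shown in the proof of Lemma~\ref{l:2}; hence $1-k\alpha_k<0$, so $p_k(0)$ has sign $(-1)^{k+1}$, that is, positive for odd $k$ and negative for even $k$.

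For the \emph{negative root}: $p_k$ has degree $k^2-1$ and leading coefficient $-1$, so the sign of $p_k(x)$ as $x\to-\infty$ is $(-1)\cdot(-1)^{k^2-1}=(-1)^{k}$ (using $k^2\equiv k\bmod2$), which is opposite to the sign $(-1)^{k+1}$ of $p_k(0)$ regardless of the parity of $k$. By the intermediate value theorem $p_k$ therefore has a root in $(-\infty,0)$.

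The step I expect to be the real obstacle is the evaluation of $p_k(0)$: one must verify that $g$ is genuinely analytic at $0$ (so it contributes nothing to the residue) and use the identity $\frac{d^{k+1}}{dx^{k+1}}(x^k\log x)=k!/x$. A more hands-on alternative, avoiding logarithms, is to compute $p_k(0)=\alpha_k\rho_k(0)-\sigma_k(0)$ straight from Corollary~\ref{coro:pk}: there $\sigma_k(0)=-(-1)^k\sum_{j=0}^{k-1}(-1)^j\binom{k+1}{j+2}=-(-1)^k k$ is immediate, whereas $\rho_k(0)=-(-1)^k\sum_{j=0}^{k-1}(-1)^j j!\,C(k,k+1,j+2)$ requires the identity $\sum_{j=0}^{k-1}(-1)^j j!\,C(k,k+1,j+2)=k^2$; this last identity can in turn be obtained by running the same residue comparison on $r_k(x)=h(x^k)$ alone, where $r_k^{(k+1)}(x)\sim -k\cdot k!/x$ near $0$. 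Parts (1) and (3) are short; essentially all the care is concentrated at $x=0$.
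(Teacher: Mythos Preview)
Your argument is correct. The treatment of the leading coefficient and of the negative root are essentially the same as in the paper (you simply spell out the binomial identity and the intermediate-value step a bit more explicitly).

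The genuine difference is in how you obtain $p_k(0)$. The paper computes $\rho_k(0)$ and $\sigma_k(0)$ directly from the formulas in Corollary~\ref{coro:pk}: $\sigma_k(0)=(-1)^{k+1}k$ is immediate, while $\rho_k(0)=(-1)^{k+1}k^2$ is obtained by substituting the recursion $C(k,k+1,j+2)=(kj+k)C(k,k,j+2)+kC(k,k,j+1)$ into the sum and telescoping it down to $(-1)^{k+1}kC(k,k,1)$. Your route is instead to isolate the only non-analytic piece of $f_k$ near $0$, namely $(1-k\alpha_k)x^k\log x$, and read off $p_k(0)$ from the coefficient of $1/x$ in $f_k^{(k+1)}$. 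This bypasses the combinatorics of the $C(k,t,j)$ entirely and yields the closed form $p_k(0)=(-1)^k k(1-k\alpha_k)$ in one stroke; it also explains \emph{why} the telescoping in the paper works, since your residue argument applied to $r_k$ alone gives exactly the identity $\sum_{j=0}^{k-1}(-1)^j j!\,C(k,k+1,j+2)=k^2$ that the paper establishes by hand. The paper's approach has the minor advantage of staying purely algebraic (no analyticity check), but yours is shorter and more transparent.
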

\begin{proof}
	The leading coefficient of $p_k(x)$ is $-1$ if and only if $\sigma_k(x)$ is monic.
	Considering the terms of the sum defining $\sigma_k(x)$, the coefficient of $x^{k^2-1}$ in
	the expression
	$$
	x^{j+1}(x-1)^{k-j-1}(1+x+\cdots+x^{k-1})^k
	$$
	is $j+1$, so the leading coefficient of $\sigma_k(x)$ is
	$$
	\sum_{j=0}^{k-1} (-1)^j \binom{k+1}{j+2}(j+1) = 1\,.
	$$
	For the second part of the claim, note that $p_k(0)=\alpha_k\rho_k(0)-\sigma_k(0)$.
	As for $\rho_k(0)$ we have that
	\begin{align*}
	\rho_k(0) & = \sum_{j=0}^{k-1} (-1)^{j+k+1}j! C(k,k+1,j+2)\\
	          & = \sum_{j=0}^{k-1} (-1)^{j+k+1}j! \left((kj+k)C(k,k,j+2)+kC(k,k,j+1)\right)\\
	          & = (-1)^{k+1}kC(k,k,1)  +  (k-1)!k^2C(k,k,k+1)\\
	          & = (-1)^{k+1}kC(k,k,1) + 0\\
	          & = (-1)^{k+1}k^2
	\end{align*}
	while $\sigma_k(0) = (-1)^{k+1} k$.
	Thus, we must show that $\alpha_k > 1/k$.
	Indeed, this holds from \eqref{e:basic} and since $\varphi_k < k/(k+1)$.
\end{proof}

\begin{conjecture}\label{conj:real-roots}
	$p_k(x)$ has at most two real roots in $(0,1)$, counting multiplicity.
\end{conjecture}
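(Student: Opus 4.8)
The plan is to prove Conjecture~\ref{conj:real-roots} for $k=3,4$ by reducing it to a finite, certifiable computation with the explicit polynomial $p_k$, and to indicate what a proof for general $k$ would require. First I would put $p_k=\alpha_k\rho_k-\sigma_k$ into a usable closed form: inside $\rho_k$ one uses the telescoping $(y-1)^{k-j-1}y^{j+1}-(y-1)^k=(y-1)^{k-j-1}(y^{j+1}-(y-1)^{j+1})$ with $y=x^k$, and inside $\sigma_k$ one uses $(x-1)(1+x+\cdots+x^{k-1})=x^k-1$ to cancel all the $(x^k-1)^k$ summands. For $k=3$ this collapses to
$$
p_3(x)=9\alpha_3(x^6+7x^3+1)-(1+x+x^2)^3(x^2-3x+3),
$$
and for $k=4$ to a similar expression of degree $15$. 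Along the way I would record the facts we will use: $p_k$ has degree $k^2-1$ and leading coefficient $-1$ (Lemma~\ref{l:5}); $p_k(0)=(-1)^{k+1}k(k\alpha_k-1)$ (Lemma~\ref{l:5}), and by the same kind of computation $p_k(1)=(-1)^{k+1}k^{k}(k\alpha_k-1)$, so that $p_k$ has sign $(-1)^{k+1}$ at \emph{both} endpoints of $[0,1]$ (recall $k\alpha_k>1$); and $\alpha_k$ is the unique positive root of $t(t+1)^{k-1}=1$ (from \eqref{e:basic}), hence algebraic of degree $k$ over $\mathbb{Q}$ --- of degree $3$ via $t^3+2t^2+t-1$ when $k=3$ and of degree $4$ via $t^4+3t^3+3t^2+t-1$ when $k=4$.

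For $k=3,4$ the conjecture is then a finite check. The coefficients of $p_k$ lie in the real number field $\mathbb{Q}(\alpha_k)$, in which sign comparisons are decidable (from the minimal polynomial of $\alpha_k$ together with the fact that $\alpha_k$ is its unique root in $(0,1)$); so an exact root-isolation computation for $p_k$ on $(0,1)$ --- e.g.\ Sturm's theorem for the number of distinct roots, together with $\gcd(p_k,p_k')$ over $\mathbb{Q}(\alpha_k)[x]$ to see that they are simple, or a Budan--Fourier sign count --- carried out with exact arithmetic in $\mathbb{Q}(\alpha_k)$ (or, equivalently, by rigorous interval arithmetic after enclosing $\alpha_k$), shows that $p_k$ has exactly two roots in $(0,1)$, both simple; in particular at most two, counting multiplicity. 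The conceptual picture behind the ``two'' is worth stating: when $\rho_k>0$ on $(0,1)$ --- which holds for $k=3$, where $\rho_3=9(x^6+7x^3+1)$, and should be checked for $k=4$ --- the inequality $p_k(x)\ge 0$ is equivalent to $R_k(x)\le\alpha_k$ for the rational function $R_k=\sigma_k/\rho_k$, and for $k=3$ this $R_3$ has no pole or zero in $[0,1]$ and satisfies $R_3(0)=R_3(1)=\tfrac13<\alpha_3$; so it would suffice to show $R_k$ is unimodal on $(0,1)$ (a single interior critical point), since a unimodal function meets each horizontal level at most twice --- and that is once more a finite root count, for the numerator of $R_k'$.

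The main obstacle is general $k$. For $k\ge5$ there is no radical expression for $\alpha_k$ and no visible factorization of $p_k$, so the Sturm/interval computation ceases to be a single fixed finite check; the only cheap bound available is the Descartes count (after the cancellations, $\rho_k$ has exactly $k$ nonzero terms and all but one term of $\sigma_k$ share a sign), which gives at most $2k+2$ positive roots --- far from the required $2$. A proof valid for all $k$ seems to need genuinely new structure: for instance, that $R_k=\sigma_k/\rho_k$ (after first checking $\rho_k>0$ on $(0,1)$) is unimodal on $(0,1)$ uniformly in $k$; or that $p_k''$ changes sign exactly once on $(0,1)$; or an a priori bound on the number of sign changes of $f_k^{(k+1)}$ extracted from the known zero set of $f_k$ (Lemma~\ref{l:1}, Corollary~\ref{coro:zero-root}) together with convexity information. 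Any of these would simultaneously settle Conjecture~\ref{conj:fk}, and it is precisely because we cannot supply such an argument that Conjecture~\ref{conj:real-roots} --- and with it the Boppana-type inequality for $k\ge5$ --- is stated only as a conjecture.
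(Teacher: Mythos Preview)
The statement is a \emph{conjecture}; the paper does not prove it in general, only for $k=3,4$ (Propositions~\ref{prop:p3} and~\ref{prop:p4}). Your proposal correctly recognises this and, like the paper, restricts the actual proof to $k=3,4$ while discussing obstacles for larger $k$.

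For $k=3,4$ your route is genuinely different from the paper's. You propose a black-box real-root count: compute a Sturm sequence (or Budan--Fourier count, plus $\gcd(p_k,p_k')$) for $p_k$ over the real number field $\mathbb{Q}(\alpha_k)$, using the minimal polynomial of $\alpha_k$ to decide signs exactly. This is valid in principle --- $\alpha_k$ is algebraic of degree $k$, and such computations are decidable --- and your closed form $p_3(x)=9\alpha_3(x^6+7x^3+1)-(1+x+x^2)^3(x^2-3x+3)$ and endpoint evaluation $p_k(1)=(-1)^{k+1}k^k(k\alpha_k-1)$ are correct. The paper instead argues by elementary calculus: for $k=3$ it first places one root of $p_3$ in $(-\infty,0)$ and one in $(1,\infty)$, reducing the claim to ``$p_3$ has at most four real roots'', and then shows $p_3^{(3)}$ has a single simple real root by locating the four roots of $p_3^{(4)}$ and checking that both local minima of $p_3^{(3)}$ are positive. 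For $k=4$ the paper does the analogous (longer) derivative chase in the appendix, and also sketches a discriminant-sign argument. The trade-off: your method is systematic and would extend mechanically to any fixed $k$, but as written it asserts the outcome of a computation you do not display; the paper's method is fully explicit and self-contained, at the cost of ad hoc casework that grows with $k$.

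Your discussion of general $k$ is sensible and consistent with the paper: Descartes gives only $\le 2k+2$ positive roots, and a uniform argument (such as your suggested unimodality of $R_k=\sigma_k/\rho_k$) would be needed. The paper does not attempt this either; it simply records the conjecture and notes that it implies Conjecture~\ref{conj:fk} via Lemma~\ref{l:6}. One small caution: your unimodality idea presupposes $\rho_k>0$ on $(0,1)$, which you verify for $k=3$ but only flag as ``should be checked'' for $k=4$; this would need to be established before the $R_k$ reformulation is usable there.
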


\begin{lemma}\label{l:6}
	Conjecture \ref{conj:real-roots} implies Conjecture \ref{conj:fk}.
\end{lemma}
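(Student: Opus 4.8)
\emph{Proof plan.} The plan is to deduce from Conjecture \ref{conj:real-roots} that $\varphi_k$ is the only zero of $f_k$ in the open interval $(0,1)$, with multiplicity exactly $2$; together with the sign information already recorded, this forces $f_k\ge 0$ on $[0,1]$. Throughout, zeros are counted with multiplicity. First I would transfer the hypothesis to $f_k$ itself: by Corollary \ref{coro:pk}, $f_k^{(k+1)}(x)=(k-1)!\,p_k(x)/\bigl(x(x^k-1)^k\bigr)$ on $(0,1)$, and since $x>0$ while $(x^k-1)^k$ has the fixed sign $(-1)^k$ there, the rational factor $x(x^k-1)^k$ is smooth and nonvanishing on $(0,1)$; hence $f_k^{(k+1)}$ and $p_k$ have exactly the same zeros in $(0,1)$, with the same multiplicities, and Conjecture \ref{conj:real-roots} becomes the assertion that $f_k^{(k+1)}$ has at most two zeros in $(0,1)$.

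Next I would run a Rolle-type descent from $f_k^{(k+1)}$ back to $f_k$. Write $Z_j$ for the number of zeros of $f_k^{(j)}$ in $(0,1)$. By Corollary \ref{coro:zero-root} we have $f_k^{(j)}(0)=0$ for $0\le j\le k-1$, and in fact $f_k^{(j)}$ extends continuously to $x=0$ for those $j$, as follows from the local behaviour $f_k(x)=(1-\alpha_k k)x^{k}\log x+O(x^{k})$ near $0$ established in the proof of Lemma \ref{l:2} (or directly from the formulas in Lemmas \ref{l:3}(i) and \ref{l:4}(i)); also $f_k(1)=0$ by Lemma \ref{l:1}. Applying Rolle's theorem between consecutive zeros of $f_k^{(j)}$, using in addition the left endpoint $x=0$ for $0\le j\le k-1$ and the right endpoint $x=1$ for $j=0$, and accounting for the drop in multiplicity at multiple zeros, I expect to obtain $Z_1\ge Z_0+1$, then $Z_{j+1}\ge Z_j$ for $1\le j\le k-1$, and $Z_{k+1}\ge Z_k-1$ at the last step (where no endpoint need be invoked). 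Chaining these inequalities gives $Z_{k+1}\ge Z_0$, so by the previous paragraph $Z_0\le 2$.

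Finally, $\varphi_k\in(0,1)$ with $f_k(\varphi_k)=f_k'(\varphi_k)=0$ by Lemma \ref{l:1}, so $Z_0\ge 2$, whence $Z_0=2$ and $\varphi_k$ is the unique zero of $f_k$ in $(0,1)$, of multiplicity exactly $2$. Thus $f_k$ has constant sign on $(0,\varphi_k)$ and on $(\varphi_k,1)$; it is positive on $(0,\varphi_k)$ by Lemma \ref{l:2}, and since $\varphi_k$ is a zero of even order — so $f_k''(\varphi_k)>0$, using positivity just to its left — it is positive on $(\varphi_k,1)$ as well. Combined with $f_k(0)=f_k(\varphi_k)=f_k(1)=0$, this yields $f_k\ge 0$ on $[0,1]$, i.e.\ Conjecture \ref{conj:fk}.

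The step I expect to be the main obstacle is the Rolle descent. Since $f_k$ is not smooth at the endpoints — it is only $C^{k-1}$ near $0$, with $f_k^{(k)}(0^{+})$ infinite, and its derivatives blow up at $1$ — one cannot simply quote the textbook estimate that an $m$-th derivative has at least $N-m$ zeros. The delicate point is to show that at each of the first $k$ steps the interval between $0$ and the first positive zero of $f_k^{(j)}$ really does contribute a zero of $f_k^{(j+1)}$; this is precisely what improves the crude bound $Z_0\le k+1$ to the needed $Z_0\le 2$, and it rests on the continuity of $f_k^{(j)}$ at $0$ for $0\le j\le k-1$, which must be extracted carefully from the explicit derivative formulas in Lemmas \ref{l:3} and \ref{l:4}.
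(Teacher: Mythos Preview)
Your proposal is correct and follows essentially the same approach as the paper: translate the hypothesis on $p_k$ into a bound on the zeros of $f_k^{(k+1)}$ in $(0,1)$, run Rolle's theorem $k+1$ times back to $f_k$ using the known zeros at $0$, $1$, and $\varphi_k$, conclude that $\varphi_k$ is a double zero and the only one in $(0,1)$, and then fix the sign via Lemma~\ref{l:2}. The paper compresses all of this into ``By Rolle's theorem, applied $k+1$ times, $f_k$ has at most $k+3$ roots in $[0,1]$'' and then accounts for them as $k$ at $0$, one at $1$, and two at $\varphi_k$; your version unpacks the same count by working in the open interval and tracking the endpoint contributions step by step via the inequalities $Z_1\ge Z_0+1$, $Z_{j+1}\ge Z_j$ for $1\le j\le k-1$, and $Z_{k+1}\ge Z_k-1$. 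Your explicit attention to the limited regularity of $f_k$ at the endpoints (only $C^{k-1}$ at $0$, with $f_k'$ blowing up at $1$) is a genuine refinement: the paper's one-line invocation of Rolle over $[0,1]$ tacitly assumes this works, and your argument supplies precisely the justification---continuity of $f_k^{(j)}$ at $0$ for $j\le k-1$ from Lemmas~\ref{l:3}(ii) and~\ref{l:4}(ii)---that makes it rigorous.
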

\begin{proof}
	We use a similar argument as in \cite{boppana-2023}.
	Assume that $p_k(x)$ has at most two real roots in $(0,1)$, counting multiplicity.
	By Rolle’s theorem, applied $k+1$ times, it follows that $f_k(x)$ has at
	most $k+3$ roots in $[0, 1]$, counting multiplicity. By Corollary \ref{coro:zero-root},
	there is a root of multiplicity $k$ at $0$. By Lemma \ref{l:1}, there is a root at $1$ and a double root at $\varphi_k$. Thus we have found all $k+3$ roots of $f_k(x)$ in $[0, 1]$.
	Because $f_k(x)$ has a double root at $\varphi_k$, it is either all nonnegative or all
	non-positive on $[0, 1]$. By Lemma \ref{l:2}, it must be all nonnegative on $[0,1]$.
\end{proof}
Observe that the proof of Lemma \ref{l:6} shows that Conjecture \ref{conj:real-roots} is equivalent to the same
conjecture with {\em at most} replaced with {\em exactly}.
Table \ref{table:pk} list $p_k(x)$ explicitly for $2 \le k \le 6$
where we have written $\alpha=\alpha_k$ for clarity.
A Python script generating $p_k(x)$ for a given $k$ can be obtained from
\url{https://github.com/raphaelyuster/almost-k-union-closed/blob/main/polynomial.py}.

Boppana observed that $p_2(x)$ has exactly two distinct real roots in $(0,1)$, both simple.
This can also be observed from Table \ref{table:pk} using Descartes' rule of signs.
We show that $p_3(x)$ and $p_4(x)$ have at most two real roots in $(0,1)$, counting multiplicity.

\begin{table}
	\renewcommand{\arraystretch}{1.2}
	\begin{tabular}{c||p{15cm}}
		$k$ &  $p_k(x)$ \\
		\hline
		$2$ & $(-4\alpha+2)+3x-4\alpha x^2-x^3$\\
		\hline 
		$3$ & $(9\alpha-3)-6x-10x^2+(63\alpha-6)x^3-3x^4+2x^5+9\alpha x^6-x^8$  \\
		\hline
		$4$ &  $(-16\alpha+4)+10x+20x^2+35x^3+ (-496\alpha+40)x^4+44x^5+40x^6 +25x^7+(-496\alpha+20)x^8+10x^9+4x^{10}+5x^{11}-16\alpha x^{12}-x^{15}$\\
		\hline
		$5$ &  
		$(25\alpha-5)-15x-35x^2-70x^3-126x^4+(3025\alpha-185)x^5-255x^6-320x^7-365x^8-371x^9+(9525\alpha-365)x^{10}-320x^{11}-255x^{12}-185x^{13}-131x^{14}+(3025\alpha-70)x^{15}-35x^{16}-15x^{17}-5x^{18}+4x^{19}+
		25\alpha x^{20}-x^{24}$\\
		\hline
		$6$ & 
		$(-36\alpha+6)+21x+56x^2+126x^3+252x^4+462x^5+(-16416\alpha+756)x^6+1161x^7+1666x^8+2247x^9+2856x^{10}+
		3416x^{11}+(-123516\alpha+3906)x^{12}+4221x^{13}+4332x^{14}+4221x^{15}+3906x^{16}+3451x^{17}+(-123516\alpha+2856)x^{18}+2247x^{19}+1666x^{20}+1161x^{21}+756x^{22}+441x^{23}+(-16416\alpha+252)x^{24}+126x^{25}+56x^{26}+21x^{27}+6x^{28}+7x^{29}-36\alpha x^{30}-x^{35}$
	\end{tabular}
	\caption{$p_k(x)$ for $k=2,\ldots,6$. For notational clarity, $\alpha=\alpha_k$.}
	\label{table:pk} 
\end{table}

\begin{proposition}\label{prop:p3}
	$p_3(x)$ has at most two real roots in $(0,1)$, counting multiplicity.
\end{proposition}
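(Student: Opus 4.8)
The plan is to bound the number of roots of $p_3$ in $(0,1)$ by showing that a sufficiently high derivative is one-signed there. I would start from the explicit formula in Table~\ref{table:pk}, writing $\alpha=\alpha_3$:
$$
p_3(x)=(9\alpha-3)-6x-10x^2+(63\alpha-6)x^3-3x^4+2x^5+9\alpha x^6-x^8,
$$
and differentiate three times to obtain
$$
p_3'''(x)=(378\alpha-36)-72x+120x^2+1080\alpha x^3-336x^5 .
$$
The crucial claim is that $p_3'''(x)>0$ on $[0,1]$, which I would prove by the grouping $p_3'''(x)=(378\alpha-36-72x)+120x^2+x^3(1080\alpha-336x^2)$: on $[0,1]$ the first bracket is at least $378\alpha-108$ and the factor $1080\alpha-336x^2$ is at least $1080\alpha-336$, so since $\alpha_3>1/3$ (by \eqref{e:basic}, as $\varphi_3<3/4$) both are strictly positive (at least $18$ and $24$ respectively), the middle term is nonnegative, and the claim follows.

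Granting $p_3'''>0$ on $[0,1]$, the rest is a short monotonicity argument. Then $p_3''$ is strictly increasing on $[0,1]$; since $p_3''(0)=-20<0$ and $p_3''(1)=648\alpha-108>0$ there is a unique $\xi\in(0,1)$ with $p_3''<0$ on $[0,\xi)$ and $p_3''>0$ on $(\xi,1]$. Hence $p_3'$ is strictly decreasing on $[0,\xi]$ and strictly increasing on $[\xi,1]$. Since $p_3'(0)=-6<0$, the strict decrease keeps $p_3'<0$ on all of $[0,\xi]$, while on $[\xi,1]$ strict monotonicity permits at most one (necessarily simple) zero; so $p_3'$ has at most one root in $[0,1]$ counting multiplicity. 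By Rolle's theorem with multiplicities --- $m$ roots of $p_3$ in $(0,1)$ counting multiplicity force at least $m-1$ roots of $p_3'$ there --- it follows that $p_3$ has at most two roots in $(0,1)$, counting multiplicity.

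The only real content is the positivity of $p_3'''$ on $[0,1]$, which is precisely where the value of $\alpha_3$ enters, through the coarse estimate $\alpha_3>1/3$; I expect this to be essentially the whole difficulty, and it is mild, since the slack in the two bracket bounds is comfortable. The only other point requiring care is the bookkeeping in the Rolle step, to ensure that ``at most one simple-type zero of $p_3'$'' genuinely yields ``at most two roots of $p_3$ counting multiplicity.'' For $k=4$ I would expect the same scheme to work, possibly needing the fourth (or a later) derivative to be one-signed on $[0,1]$, again via a crude lower bound such as $\alpha_4>1/4$.
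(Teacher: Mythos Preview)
Your proof is correct and is genuinely simpler than the paper's. The paper first locates a root of $p_3$ in $(1,\infty)$ and one in $(-\infty,0)$, reduces to showing $p_3$ has at most four real roots on all of $\mathbb{R}$, and for this goes one derivative further: it pins down the four real roots of $p_3^{(4)}$ by sign changes, identifies which of them are local minima of $p_3^{(3)}$, and then bounds $p_3^{(3)}$ from below on two short intervals to show those minima are positive, concluding that $p_3^{(3)}$ has a single simple real root globally. Your argument bypasses all the global bookkeeping by working only on $[0,1]$: the grouping
\[
p_3'''(x)=(378\alpha-36-72x)+120x^2+x^3(1080\alpha-336x^2)
\]
together with the coarse bound $\alpha_3>1/3$ (equivalently $\varphi_3<3/4$) already gives $p_3'''>0$ on $[0,1]$, and the subsequent sign-tracking of $p_3''$ and $p_3'$ plus Rolle-with-multiplicity finishes cleanly. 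What the paper's route buys is the exact global real-root count of $p_3$ (three), which feeds into the narrative leading to Conjecture~\ref{conj:real-roots}; your route buys brevity for the proposition as stated.

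One caveat on your closing remark: for $k=4$ the analogous ``some high derivative is one-signed on $[0,1]$'' strategy is not as cheap. The relevant sign-definite derivative turns out to be $p_4^{(4)}$ (which is in fact negative on $[0,1]$), but establishing this directly by a grouping with only $\alpha_4>1/4$ is not obvious---the positive monomials in $p_4^{(4)}$ are large---and the paper instead proves it as a by-product of a longer derivative-by-derivative root count (Appendix~\ref{appendix:p4}). So the expectation that the same scheme goes through for $k=4$ with comparable effort is optimistic.
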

\begin{proof}
	By Table \ref{table:pk}, and since $\alpha_3 \approx 0.4655$, we have that $p_3(1)=81\alpha_3 - 27 > 0$. Since its degree is even and its leading
	coefficient is negative, this implies that $p_3(x)$ has a root larger than $1$.
	By Lemma \ref{l:5}, $p_3(x)$ has a negative root. It therefore suffices to prove that $p_3(x)$ has
	at most four real roots counting multiplicity.
	To this end, it suffices to prove that the third derivative of $p_3(x)$ has precisely one simple real root.
	The third and fourth derivatives of $p_3(x)$ are:
	\begin{align*}
	   {p_3}^{(3)}(x) & =  (378\alpha_3-36)-72x+120x^2+1080\alpha_3 x^3-336x^5\,;\\
	   {p_3}^{(4)}(x) & =  -72+240x+3240\alpha x^2 - 1680x^4\,.
	\end{align*}
	We show that ${p_3}^{(4)}(x)$ has exactly four real roots, all simple:
	\begin{alignat*}{5}
	&{p_3}^{(4)}\left(-0.9\right) & ~= &~ \tfrac{9}{125}(36450\alpha_3 - 19309) &~ < 0\,,\\
	&{p_3}^{(4)}\left(-0.8\right) & ~= &~ \tfrac{216}{125}(1200\alpha_3-551) & > 0\,,\\
	&{p_3}^{(4)}\left(0\right)    & ~= &~ -72                & < 0\,,\\
	&{p_3}^{(4)}\left(0.5\right)  & ~= &~ 810\alpha_3-57     & > 0\,.
\end{alignat*}
Denoting the roots of ${p_3}^{(4)}(x)$ by $\gamma_1 < \gamma_2 < \gamma_3 < \gamma_4$, we have
$\gamma_1 \in (-0.9,-0.8)$,
$\gamma_2 \in (-0.8,0)$,
$\gamma_3 \in (0,0.5)$,
$\gamma_4 \in (0.5,\infty)$.
	
As the leading coefficient of ${p_3}^{(3)}(x)$ is negative, it must be that
$\gamma_1,\gamma_3$ are local minima of ${p_3}^{(3)}(x)$ and $\gamma_2,\gamma_4$ are local maxima of
${p_3}^{(3)}(x)$. To show that ${p_3}^{(3)}(x)$ only has one simple real root, it
suffices to prove that the value of ${p_3}^{(3)}(x)$ at both local minima is positive.
First observe that ${p_3}^{(3)}{(0)} = 378\alpha_3-36 > 100$.
Now, for every $x \in [0,0.5]$ we have that
$$
{p_3}^{(3)}(x) \ge {p_3}^{(3)}(0) - 72 \cdot \tfrac{1}{2} - 336 \cdot \tfrac{1}{32} > 50\;.
$$
As $\gamma_3 \in (0,0.5)$, we have that ${p_3}^{(3)}{(\gamma_3)} > 0$.
We next show that ${p_3}^{(3)}{(\gamma_1)} > 0$.
$$
{p_3}^{(3)}(-0.9) = \tfrac{9}{6250}(225281 - 284250\alpha_3) > 133\;.
$$
For every $x \in [-0.9,-0.8]$ we have
\begin{align*}
{p_3}^{(3)}(x) - {p_3}^{(3)}(-0.9)
& = -72\left(x+\tfrac{9}{10}\right)+120\left(x^2-\tfrac{81}{100}\right)+1080\alpha_3\left(x^3 + \tfrac{729}{1000}\right) - 336\left(x^5+\tfrac{59049}{100000}\right) \\
& \ge -72\left(-\tfrac{8}{10}+\tfrac{9}{10}\right)+120\left(\tfrac{16}{25}-\tfrac{81}{100}\right)
- 336\left(-\tfrac{1024}{3125}+\tfrac{59049}{100000}\right) = -\tfrac{724401}{6250}\\
&  > -116\;.
\end{align*}
As $\gamma_1 \in (-0.9,-0.8)$, we have ${p_3}^{(3)}{(\gamma_1)} > 0$.
\end{proof}

\begin{proposition}\label{prop:p4}
	$p_4(x)$ has at most two real roots in $(0,1)$, counting multiplicity.
\end{proposition}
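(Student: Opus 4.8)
The plan is to follow the proof of Proposition~\ref{prop:p3}, but — since $k=4$ is even, so that $p_4(0)<0$ and also $p_4(1)=256-1024\alpha_4<0$, which leaves no room to exploit a real root of $p_4$ beyond $1$ — to argue entirely inside $(0,1)$: I will show that $p_4$ is increasing and then decreasing on $(0,1)$, and a function of that shape has at most two roots in $(0,1)$, counting multiplicity. Writing $p_4(x)=\sum_{n=0}^{15}c_nx^n$ as in Table~\ref{table:pk}, one has $p_4^{(j)}(0)=j!\,c_j$, and from the table $c_1,c_2,c_3,c_5,c_6,c_7,c_9,c_{10},c_{11}>0$ while $c_4,c_8,c_{12}<0$, $c_{13}=c_{14}=0$, and $c_{15}=-1$. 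As preliminary bookkeeping I would record the crude bound $0.3801<\alpha_4<0.3811$ (from $0.7244<\varphi_4<0.725$ and \eqref{e:basic}) and check, by direct evaluation, that $p_4^{(j)}(1)<0$ for every $0\le j\le 11$; each such value is of the form $A-B\alpha_4$ with $B>0$, and in each case either $A\le 0$ or $A/B\le\tfrac14<\alpha_4$.

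The argument is then a ``unimodality ladder'' run downward from level $12$. The base is immediate: $p_4^{(12)}(x)=-16\cdot12!\,\alpha_4-\tfrac{15!}{6}\,x^3<0$ for all $x\ge0$ (using $c_{13}=c_{14}=0$). In each step, $p_4^{(j)}$ is an antiderivative of $p_4^{(j+1)}$; if the latter is negative, or positive then negative, on $(0,1)$, then $p_4^{(j)}$ is decreasing, or increasing then decreasing, and if moreover $p_4^{(j)}(0)>0>p_4^{(j)}(1)$ it follows that $p_4^{(j)}$ is itself positive then negative on $(0,1)$. Starting from $p_4^{(12)}<0$, this carries the ladder through $j=11,10,9$. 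At $j=8$, however, $p_4^{(8)}(0)=8!\,c_8<0$: the step only yields that $p_4^{(8)}$ is increasing then decreasing on $(0,1)$ with negative values at both ends, so to prevent a positive bump one must prove $p_4^{(8)}(x)<0$ for all $x\in(0,1)$. Granting that, the ladder resumes through $j=7,6,5$; at $j=4$ the same difficulty recurs and one must prove $p_4^{(4)}(x)<0$ for all $x\in(0,1)$; thereafter $j=3,2,1$ go through routinely (note $p_4'(0)=c_1=10>0$). Hence $p_4'$ is positive and then negative on $(0,1)$, $p_4$ is increasing and then decreasing there, and the proof is complete.

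So everything reduces to the two pointwise inequalities $p_4^{(8)}<0$ and $p_4^{(4)}<0$ on $(0,1)$; proving these — the analogue of the interval estimates for ${p_3}^{(3)}$ near its local minima in Proposition~\ref{prop:p3} — is the main obstacle. Since $p_4^{(8)}$ (respectively $p_4^{(4)}$) attains its maximum on $(0,1)$ at the unique root $\beta_9$ of $p_4^{(9)}$ (respectively at the root $\beta_5$ of $p_4^{(5)}$) — these derivatives being, by the ladder, positive then negative on $(0,1)$ — it suffices to pin $\beta_9,\beta_5$ into short rational subintervals by a couple of sign evaluations and then check negativity only there. The polynomial $p_4^{(8)}$ has only the monomials $x^0,x^1,x^2,x^3,x^4,x^7$, with its $x^0$-, $x^4$- and $x^7$-coefficients negative; discarding the negative $x^7$-term, the positive $x,x^2,x^3$-part is bounded on the relevant range by a constant that is swamped by $|\,8!\,c_8\,|$ (or, for larger $x$, by the large negative $x^4$-term). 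For $p_4^{(4)}$ one discards its negative $x^8$- and $x^{11}$-terms, bounds the positive $x^5,x^6,x^7$-part above by a constant multiple of $x^5$, absorbs that into the $x^4$-term using that the sum of the $x^4,x^5,x^6,x^7$-coefficients is negative, and checks that the remaining degree-$4$ polynomial is negative on a short range around $x\approx0.25$. Every quantity in sight is rational, and once $\alpha_4>0.38$ is used the margins, though modest, are bounded away from zero.
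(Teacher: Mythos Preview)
Your approach is correct and constitutes a genuinely different organisation of the argument than the paper's. The paper (both in the discriminant sketch and in the appendix) counts \emph{all} real roots of every derivative, using sign evaluations at negative points such as $-0.2$, $-0.15$ to establish the global real-root pattern $(3,2,3,2,1,2,3,2,1,2,3,2,1,2,1)$, and only then invokes the negative root from Lemma~\ref{l:5} to bound the count in $(0,1)$. You instead stay entirely inside $(0,1)$: a ``unimodality ladder'' driven only by the signs of $p_4^{(j)}(0)=j!\,c_j$ and $p_4^{(j)}(1)$, which makes the structure more uniform and dispenses with all the negative-point evaluations. The two routes share precisely the hard steps: at levels $j=8$ and $j=4$ one must show the derivative stays negative at its interior maximum, and your proposed interval estimates there are essentially the paper's bounds on ${p_4}^{(8)}$ over $[0,0.4]$ and on ${p_4}^{(4)}$ over $[0,0.25]$ (locating $\beta_9\in(0,0.4)$ via ${p_4}^{(9)}(0.4)<0$ and $\beta_5\in(0,0.25)$ via ${p_4}^{(5)}(0.25)<0$). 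What the paper's route buys is the stronger global statement that $p_4$ has exactly three real roots; what yours buys is economy---no negative abscissae, no need to exhibit three roots at levels $10,6,2$, and a single monotone descent. One small point: be sure to state explicitly that $\beta_5,\beta_9$ lie in the \emph{strictly decreasing} part of the relevant derivative (so they are simple), which makes the ``at most two roots, counting multiplicity'' conclusion clean at the bottom of the ladder.
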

\begin{proof}
	By Lemma \ref{l:5}, $p_4(x)$ has a negative root. It therefore suffices to prove that $p_4(x)$ has
	at most three real roots, counting multiplicity.
	
	There are two distinct ways to prove this fact. The one we will not pursue in detail here, is by considering the signs of the discriminants of all the derivatives of $p_4(x)$.
	It turns out that the sign pattern of these discriminants is $(+,+,-,-,-,+,-,-,-,+,-,-,-,0,+)$
	where the $i$'th coordinate (starting at $i=0$) is the sign of the discriminant of ${p_4}^{(i)}(x)$.
	Recalling that the discriminant of a (real, univariate) polynomial is zero if and only if it has a multiple root and otherwise it is positive if and only if the number of non-real roots (counting multiplicity) is a multiple of $4$, we easily obtain that the number of real roots of the derivatives follows the sequence $(3,2,3,2,1,2,3,2,1,2,3,2,1,2,1)$ where the $i$'th coordinate (starting at $i=0$) is the number of real roots of ${p_4}^{(i)}(x)$. This is seen, starting as follows: the $14$'th derivative is a linear polynomial so has precisely one real root. The $13$'th derivative has discriminant $0$, and has a multiple root (at $x=0$, in fact). The $12$'th derivative has negative discriminant, so it must have two conjugate non-real roots, and one real root. The $11$'th derivative has negative discriminant, so again has only
	two non-real conjugate roots, and hence two real roots. Continuing this way, we see that for
	this particular sign pattern of discriminants, the number of real roots of ${p_4}^{(i)}(x)$
	is uniquely determined from the number of real roots of ${p_4}^{(i+1)}(x)$, from the sign of
	the discriminant of ${p_4}^{(i)}(x)$, from the fundamental theorem of algebra, and from the fact that the number or real roots of a polynomial is at most one larger than the number of real roots of its derivative.
	Finally, we obtain that the number of real roots of ${p_4}^{(0)}(x)$, i.e. ${p_4}(x)$, is $3$.
	A Maple worksheet computing these discriminant signs is available at
	\url{https://github.com/raphaelyuster/almost-k-union-closed/blob/main/p4.mw}.
	Observe that each discriminant is an integer polynomial in $\alpha_4$, and hence an integer polynomial in $\varphi_4$.
	But recall that $\varphi_4^4=1-\varphi_4$, so each of these discriminants can be reduced to an {\em integer cubic} polynomial in $\varphi_4$ (the polynomial $x^4+x-1$ is irreducible over ${\mathbb Q}$). Thus, the discriminant signs are easy to obtain by simply assigning $\varphi_4$ into explicit integer cubic polynomials.
	
	A more direct approach is similar to the one in Proposition \ref{prop:p4} and requires considering
	a few derivatives (but not all). A detailed rigorous account is given in Appendix
	\ref{appendix:p4} where we prove that
	the real-root pattern of the derivatives of $p_4(x)$ is $(3,2,3,2,1,2,3,2,1,2,3,2,1,2,1)$ as stated above.
\end{proof}
By Lemma \ref{l:6}, Proposition \ref{prop:p3} and Proposition \ref{prop:p4}, we have
\begin{corollary}\label{coro:234}
	$f_k(x)$ is nonnegative in $[0,1]$ for $k=3,4$ (and for $k=2$, as shown in \cite{AHS-2022,boppana-2023}).
\end{corollary}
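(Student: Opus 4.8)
The plan is simply to chain together implications that have already been set up. Recall that, for a fixed $k$, Conjecture \ref{conj:fk} is precisely the assertion ``$f_k(x)$ is nonnegative on $[0,1]$'', and that Lemma \ref{l:6} shows Conjecture \ref{conj:real-roots} for that same $k$ — namely that $p_k(x)$ has at most two real roots in $(0,1)$, counting multiplicity — implies Conjecture \ref{conj:fk}. So the whole task reduces to verifying the root bound on $p_k(x)$ for $k = 2, 3, 4$, after which Lemma \ref{l:6} does the rest.

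For $k = 2$ this bound is classical: Boppana observed that $p_2(x)$ has exactly two simple real roots in $(0,1)$, and this is reproved in \cite{AHS-2022, boppana-2023}; alternatively it can be read off the explicit form $(-4\alpha_2+2)+3x-4\alpha_2 x^2-x^3$ in Table \ref{table:pk} via Descartes' rule of signs together with the location of the roots. For $k = 3$ and $k = 4$, Proposition \ref{prop:p3} and Proposition \ref{prop:p4} supply exactly the required statement. Feeding each of the three cases into Lemma \ref{l:6} yields $f_k(x) \ge 0$ on $[0,1]$ for $k = 2, 3, 4$, which is the claim; there is nothing further to do.

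The only genuine work sits inside those two propositions, and that is where I expect the main obstacle to lie: bounding the number of real roots of $p_3$ and $p_4$ in $(0,1)$. The mechanism is to differentiate $p_k$ repeatedly until one reaches a polynomial whose real roots can be counted outright — a linear polynomial, or one handled by a short sign analysis — and then to walk back up via Rolle's theorem and the fundamental theorem of algebra, using that each derivative has at most one more real root than the next, with the sign or parity information pinning the count down exactly (either from explicit sign evaluations at carefully chosen rational points, together with crude tail bounds $\sum |a_i| x^i$, or from the signs of the successive discriminants, each of which reduces to an integer cubic in $\varphi_k$ since $\varphi_k^k = 1 - \varphi_k$). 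The difficulty is purely organizational: the evaluation points must be chosen so that every sign change is unambiguous, and for $k = 4$ the degree-$15$ starting polynomial makes the chain of derivatives long enough that the bookkeeping must be done carefully — but no idea beyond what is already in Propositions \ref{prop:p3} and \ref{prop:p4} is needed.
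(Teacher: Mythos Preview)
Your proposal is correct and matches the paper's own proof, which is simply the one-line deduction from Lemma \ref{l:6} together with Propositions \ref{prop:p3} and \ref{prop:p4} (with $k=2$ cited from \cite{AHS-2022,boppana-2023}). The additional paragraph sketching the internal mechanism of those propositions is accurate but unnecessary here, since the corollary takes them as already established.
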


\section{The multidimensional Chase-Lovett function}\label{sec:multi}

For $x \in (0,1)$, let
$$
F_k(x) \coloneqq \frac{h(x^k)}{x^{k-1}h(x)}\;.
$$
Let $\varphi=\varphi_2 = \frac{\sqrt{5}-1}{2}$. The following lemma is proved in \cite{CL-2022}.
\begin{lemma}[\cite{CL-2022}]\label{l:cl}
	For $x,y \in [0,1]$ it holds that
	\[
	\pushQED{\qed} 
	h(xy) \ge \frac{1}{2\varphi}\left(xh(y) + yh(x)\right)\,.
	\qedhere \popQED
	\]
\end{lemma}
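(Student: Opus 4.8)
The plan is to reduce the two-variable inequality to the one-variable fact that $f_2$ is nonnegative on $[0,1]$ (proved in \cite{AHS-2022,boppana-2023}; see also Corollary~\ref{coro:234}), which, recalling $\alpha_2=\varphi$, says exactly that $h(t^2)\ge \tfrac{1}{\varphi}\,t\,h(t)$ for $t\in[0,1]$. The reduction rests on the auxiliary claim that for all $x,y\in[0,1]$,
$$
x\,h(y)+y\,h(x)\ \le\ 2\sqrt{xy}\,h\!\left(\sqrt{xy}\right),
$$
i.e.\ that the symmetric function $g(x,y):=xh(y)+yh(x)$ is, along each hyperbola $xy=\text{const}$, maximized on the diagonal. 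Granting this, put $p=\sqrt{xy}$; then
$h(xy)=h(p^2)\ge \tfrac{1}{\varphi}\,p\,h(p)=\tfrac{1}{2\varphi}\cdot 2p\,h(p)\ge \tfrac{1}{2\varphi}\bigl(xh(y)+yh(x)\bigr)$, which is the lemma. The boundary cases $x\in\{0,1\}$ or $y\in\{0,1\}$ are immediate from $h(0)=h(1)=0$ and $\tfrac{1}{2\varphi}<1$. (This is essentially the argument of Chase and Lovett.)

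To prove the displayed claim, fix $x,y\in(0,1)$, write $c=\log(xy)<0$, and substitute $x=e^{s}$, $y=e^{c-s}$; set $a=e^{s}$, $b=e^{c-s}$, so that $ab=e^{c}$ is constant and $\tfrac{da}{ds}=a$, $\tfrac{db}{ds}=-b$. Let $\Phi(s):=g(a,b)=a\,h(b)+b\,h(a)$. Since $g$ is symmetric, $\Phi(c-s)=\Phi(s)$, so $\Phi$ is symmetric about $s=c/2$, which corresponds to $a=b=\sqrt{xy}$. It therefore suffices to show that $\Phi$ is concave on the interval where $a,b\in(0,1)$, namely $s\in(c,0)$, which contains both $s=\log x$ and $s=c/2$; concavity plus symmetry then forces the maximum at $s=c/2$.

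For the concavity, a direct computation using $h'(u)=\log\frac{1-u}{u}$ and $h''(u)=-\frac{1}{u(1-u)}$, starting from $\Phi''(s)=\bigl[ah(b)+bh(a)\bigr]-ab\bigl(h'(a)+h'(b)\bigr)+ab\bigl(a\,h''(a)+b\,h''(b)\bigr)$, the algebraic identity $ah(b)+bh(a)-ab(h'(a)+h'(b))=-a\log(1-b)-b\log(1-a)$, and $u\,h''(u)=-\frac{1}{1-u}$, yields
$$
\Phi''(s)=-a\log(1-b)-b\log(1-a)-ab\Bigl(\tfrac{1}{1-a}+\tfrac{1}{1-b}\Bigr).
$$
Applying the elementary bound $\log z\le z-1$ with $z=\frac{1}{1-t}$ gives $-\log(1-t)\le \frac{t}{1-t}$ for $t\in(0,1)$, whence $-a\log(1-b)\le \frac{ab}{1-b}$ and $-b\log(1-a)\le \frac{ab}{1-a}$, so $\Phi''(s)\le 0$.

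The main obstacle is exactly this concavity step: $\Phi''(s)$ is a difference of two manifestly positive expressions, so its sign is not visible until one carries out the cancellation that collapses the $h$- and $h'$-terms into the compact form $-a\log(1-b)-b\log(1-a)$, and then matches each logarithmic term against the corresponding $\frac{ab}{1-b}$, $\frac{ab}{1-a}$ via $\log z\le z-1$. The remaining work — performing the differentiation, checking that the product-rule terms simplify as claimed, and verifying the boundary cases — is routine.
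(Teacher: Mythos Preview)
The paper does not prove Lemma~\ref{l:cl}; it is quoted from \cite{CL-2022} and marked with a terminal \qedsymbol, so there is no in-paper proof to compare against.

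Your argument is correct. The second-derivative computation is right: with $a'=a$, $b'=-b$ and $(ab)'=0$ one gets
\[
\Phi''(s)=\bigl[ah(b)+bh(a)\bigr]-ab\bigl(h'(a)+h'(b)\bigr)+ab\bigl(a\,h''(a)+b\,h''(b)\bigr),
\]
the identity $ah(b)+bh(a)-ab(h'(a)+h'(b))=-a\log(1-b)-b\log(1-a)$ checks out term by term, and the bound $-\log(1-t)\le t/(1-t)$ pairs each logarithm with its rational counterpart to give $\Phi''\le 0$. Symmetry of $\Phi$ about $s=c/2$ then forces the maximum of $xh(y)+yh(x)$ along $xy=\text{const}$ at the diagonal, and the nonnegativity of $f_2$ on $[0,1]$ finishes it.

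For context, the reduction-to-the-diagonal step that the paper actually carries out (for the $k$-variable analogue, Lemma~\ref{l:mk-bounded}) is organized differently: rather than proving concavity along hyperbolas, it looks at a critical interior point of $M_k$, writes the first-order conditions, and uses that $Q(x)=xg'(x)=\log(1-x)/x$ is strictly decreasing to force all coordinates equal. Your concavity route is a clean alternative for $k=2$; it gives the diagonal reduction without appealing to monotonicity of $Q$, at the cost of the explicit $\Phi''$ computation. Both approaches ultimately hand off to the same one-variable inequality $f_2\ge 0$.
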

\noindent
Let
$$
\mu_k \coloneqq
\begin{cases}
	\frac{1}{\alpha_k} & {\rm if}~~ 2 \le k \le 4\,;\\
	\frac{2^p-q}{2^p \varphi^p}+ \frac{q}{2^p \varphi^{p+1}} & {\rm if}~~ k \ge 5,\, p=\lfloor \log_2(k)\rfloor,\, q=k-2^p\,.
\end{cases}
$$
We apply Lemma \ref{l:cl} and our results from the previous section to lower-bound $F_k(x)$. 
\begin{lemma}\label{l:lb}
	For $x \in (0,1)$ we have
	$$
	F_k(x) \ge \mu_k\;.
	$$
\end{lemma}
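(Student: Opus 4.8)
The plan is to split into the two regimes defining $\mu_k$. For $2 \le k \le 4$, the claim is exactly $F_k(x) \ge 1/\alpha_k$ for $x \in (0,1)$, i.e.\ $h(x^k) \ge \frac{1}{\alpha_k} x^{k-1} h(x)$, which is the same as $\alpha_k h(x^k) - x^{k-1}h(x) \ge 0$, that is, $f_k(x) \ge 0$ on $(0,1)$. This is precisely Conjecture \ref{conj:fk}, which by Corollary \ref{coro:234} is established for $k=2,3,4$. So the first case is immediate from the results of Section \ref{sec:inequality}; I would simply cite Corollary \ref{coro:234}, noting that on the open interval $(0,1)$ we have $x^{k-1}h(x) > 0$, so nonnegativity of $f_k$ there gives the stated lower bound on the quotient $F_k(x)$.

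For $k \ge 5$, write $p = \lfloor \log_2 k \rfloor$ and $q = k - 2^p$, so $2^p \le k < 2^{p+1}$ and $0 \le q < 2^p$. The idea is to bound $h(x^k)$ by iterating Lemma \ref{l:cl}. First I would prove, by a straightforward induction on $m$ using Lemma \ref{l:cl} with $y = x^{m-1}$ (or more symmetrically splitting $x^m = x^{m'} x^{m-m'}$), the inequality $h(x^m) \ge \frac{m}{(2\varphi)^{\lceil \log_2 m\rceil}} x^{m-1} h(x)$ — or, carrying the powers of $2$ exactly, the cleaner statement that for $m = 2^a$ one gets $h(x^{2^a}) \ge \frac{2^a}{(2\varphi)^a} x^{2^a - 1} h(x)$, and for general $m$ with $2^p \le m < 2^{p+1}$ one combines a "full" block of $2^p$ with the remaining $q = m - 2^p$ factors, applying Lemma \ref{l:cl} once to split $x^m = x^{2^p}\cdot x^q$ and then using the dyadic bounds on each piece. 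Tracking constants through $x^{2^p}$ and $x^q$ and dividing by $x^{k-1}h(x)$ should produce exactly the quantity
$$
\frac{2^p - q}{2^p \varphi^p} + \frac{q}{2^p \varphi^{p+1}} = \mu_k .
$$

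The main obstacle — and the step requiring care — is getting the bookkeeping of the two-variable inequality to collapse to the claimed closed form for $\mu_k$. Each application of Lemma \ref{l:cl} of the form $h(uv) \ge \frac{1}{2\varphi}(u h(v) + v h(u))$ introduces a factor $\frac{1}{2\varphi}$ and a sum of two terms, and one has to choose the factorizations $x^k = x^{i_1}\cdots$ so that after $p$ halvings the $q$ "extra" factors pick up one more power of $1/\varphi$ than the $2^p - q$ "base" factors, yielding the weighted average above; one also needs $h(x^q) \ge q\, x^{q-1} h(x)$ as the base-case bound (which follows from Lemma \ref{l:cl} applied $q-1$ times, or more simply from superadditivity $h(a+b) \le h(a)+h(b)$ after the substitution, since $2\varphi > 1$ is not needed here — one uses $h(uv)\ge \tfrac{1}{2\varphi}(uh(v)+vh(u)) \ge \tfrac{1}{2\varphi}\cdot 2\sqrt{uv}\sqrt{h(u)h(v)}$ is the wrong direction, so instead iterate the linear-in-$h$ bound directly). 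Once the recursion is set up consistently, everything else is routine algebra; I would present the dyadic induction first as a lemma, then assemble the general $k$ bound and simplify to $\mu_k$.
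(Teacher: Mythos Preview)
Your $k\in\{2,3,4\}$ argument is correct and coincides with the paper's: $F_k(x)\ge 1/\alpha_k$ on $(0,1)$ is exactly the nonnegativity of $f_k$ from Corollary~\ref{coro:234}.

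For $k\ge 5$, however, the concrete split you commit to---$x^k=x^{2^p}\cdot x^q$, then dyadic bounds on each factor---does \emph{not} recover $\mu_k$. Take $k=5$ ($p=2$, $q=1$): your scheme gives
\[
F_5(x)\;\ge\;\frac{1}{2\varphi}\Bigl(1+\frac{1}{\varphi^{2}}\Bigr)=\frac{1}{2\varphi}+\frac{1}{2\varphi^{3}}\approx 2.927,
\]
which is strictly smaller than $\mu_5=\tfrac{3}{4\varphi^{2}}+\tfrac{1}{4\varphi^{3}}\approx 3.023$. In tree language, iterating Lemma~\ref{l:cl} builds a binary tree on $k$ leaves and yields the bound $\sum_{\ell}(2\varphi)^{-d_\ell}$; your split produces leaf depths $(1,3,3,3,3)$, whereas $\mu_k$ corresponds to the \emph{balanced} tree with $2^p-q$ leaves at depth $p$ and $2q$ leaves at depth $p+1$ (for $k=5$: depths $(2,2,2,3,3)$). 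Your aside that one needs $h(x^q)\ge q\,x^{q-1}h(x)$ is also false already at $q=2$, since $F_2(\varphi)=1/\varphi<2$.

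The paper obtains the balanced tree by always \emph{halving}: apply Lemma~\ref{l:cl} to $x^k=x^{\lceil k/2\rceil}x^{\lfloor k/2\rfloor}$ to get
\[
F_k(x)\;\ge\;\frac{1}{2\varphi}\bigl(F_{\lceil k/2\rceil}(x)+F_{\lfloor k/2\rfloor}(x)\bigr),
\]
and then induct. Writing $\lceil k/2\rceil=2^{p-1}+\lceil q/2\rceil$ and $\lfloor k/2\rfloor=2^{p-1}+\lfloor q/2\rfloor$, the two inductive values of $\mu$ average under $\tfrac{1}{2\varphi}$ to exactly $\mu_k$; one also checks that the $k\ge 5$ formula for $\mu_k$, evaluated at $k=2,3,4$, is at most $1/\alpha_k$, so the base cases are compatible with the recursion. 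Your instinct to iterate Lemma~\ref{l:cl} is right; the missing idea is the halving rule.
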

\begin{proof}
We proceed by induction on $k$, where $k=2,3,4$ hold by Corollary \ref{coro:234}.

For the sake of the induction, observe also that the expression defining $\mu_k$ for $k \ge 5$ can be naively used for $k=2,3,4$.
Indeed, for $k=2$ the expression equals $1/\varphi=1/\alpha_2$, for $k=3$ the expression is
$1/2\varphi +1/2\varphi^2 = 2.118.. < 2.148.. = 1/\alpha_3$
and for $k=4$ the expression is $1/\varphi^2 =2.618.. < 2.630.. = 1/\alpha_4$.
Assume that $k=2^p+q \ge 5$ where $0 \le q < 2^p$ and that the lemma holds for values smaller than $k$.
By Lemma \ref{l:cl} we have
\begin{align*}
   F_k(x) & = \frac{h(x^k)}{x^{k-1}h(x)}\\
   & = \frac{h(x^{\lfloor k/2 \rfloor}x^{\lceil k/2 \rceil})}{x^{k-1}h(x)}\\
   & \ge \frac{1}{2\varphi}\left(\frac{x^{\lfloor k/2 \rfloor}h(x^{\lceil k/2 \rceil})+x^{\lceil k/2 \rceil}h(x^{\lfloor k/2 \rfloor}) }{x^{k-1}h(x)}\right)\\
   & = \frac{1}{2\varphi} \left( F_{\lceil k/2 \rceil}(x)+F_{\lfloor k/2 \rfloor}(x)\right)\\
   & \ge \frac{1}{2\varphi} \left(\frac{2^{p-1}-\lceil q/2 \rceil}{2^{p-1} \varphi^{p-1}}+ \frac{\lceil q/2 \rceil}{2^{p-1} \varphi^{p}}+ \frac{2^{p-1}-\lfloor q/2 \rfloor}{2^{p-1} \varphi^{p-1}}+ \frac{\lfloor q/2 \rfloor}{2^{p-1} \varphi^{p}}\right)\\
   & = \frac{2^p-q}{2^p \varphi^p}+ \frac{q}{2^p \varphi^{p+1}}\;.
 \end{align*}
\end{proof}
\begin{lemma}\label{l:mu}
	Let $1 \le m \le k-2$. Then, $\mu_{k-m}/(k-m) > \mu_k/k$.
\end{lemma}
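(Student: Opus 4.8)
The plan is to prove the stronger, "continuous" statement that the function $g(k) = \mu_k/k$ is strictly decreasing on the integers $k \ge 2$, and in fact to get a clean lower bound on how fast it drops, so that $\mu_{k-m}/(k-m) > \mu_k/k$ follows for every $1 \le m \le k-2$. First I would dispose of the small cases $2 \le k \le 4$ by direct computation: using $\mu_2 = 1/\varphi$, $\mu_3 = 1/\alpha_3$, $\mu_4 = 1/\alpha_4$ (and the numerical values in Table \ref{table:k-vars}), one checks $\mu_2/2 > \mu_3/3 > \mu_4/4$ and also that $\mu_4/4$ exceeds $\mu_5/5$ computed from the $k \ge 5$ formula; this handles all pairs $(k-m,k)$ with $k-m \le 4$. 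So the remaining work is to show $\mu_k/k$ is decreasing for $k \ge 4$ when $\mu_k$ is given by the formula $\mu_k = \frac{2^p - q}{2^p \varphi^p} + \frac{q}{2^p \varphi^{p+1}}$ with $p = \lfloor \log_2 k\rfloor$, $q = k - 2^p$.

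For the main range, the key observation is that $\mu_k = 2^{-p}\varphi^{-p}\bigl(2^p - q + q/\varphi\bigr) = 2^{-p}\varphi^{-p}\bigl(2^p + q(1/\varphi - 1)\bigr) = 2^{-p}\varphi^{-p}\bigl(2^p + q\,\varphi\bigr)$, using $1/\varphi - 1 = \varphi$. Hence
$$
\frac{\mu_k}{k} = \frac{2^p + q\varphi}{2^p\varphi^p\,(2^p + q)}.
$$
Within a single dyadic block $2^p \le k < 2^{p+1}$ (so $p$ fixed, $0 \le q < 2^p$, and $k = 2^p + q$), write $t = q/2^p \in [0,1)$, so up to the positive constant $\varphi^{-p}$ the quantity $\mu_k/k$ equals $\frac{1 + t\varphi}{2^p\,\varphi^p\,(1+t)}$; as a function of the real variable $t$ this is $\frac{1+t\varphi}{1+t}$ times a positive constant, and since $\varphi < 1$ the function $t \mapsto \frac{1+t\varphi}{1+t}$ is strictly decreasing. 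So $\mu_k/k$ is strictly decreasing as $k$ runs through a fixed dyadic block. The one remaining check is the transition across a power of two: comparing the last value in block $p$, namely $k = 2^{p+1}-1$ (i.e. $t = 1 - 2^{-p}$), with the first value in block $p+1$, namely $k = 2^{p+1}$ (i.e. $q = 0$, value $2^{-(p+1)}\varphi^{-(p+1)}\cdot 2^{p+1}\big/2^{p+1} = \varphi^{-(p+1)}/2^{p+1}$). Plugging $t = 1 - 2^{-p}$ into the block-$p$ expression gives $\varphi^{-p}\,\frac{1 + (1-2^{-p})\varphi}{2^p(2 - 2^{-p})} = \varphi^{-p}\,\frac{1+\varphi - 2^{-p}\varphi}{2^{p+1} - 1}$, and one must verify this exceeds $\varphi^{-(p+1)}/2^{p+1}$, equivalently $\frac{2^{p+1}\,(1+\varphi - 2^{-p}\varphi)}{2^{p+1}-1} > \frac{1}{\varphi} = 1 + \varphi$ (using $1/\varphi = 1+\varphi$). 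Since $\frac{2^{p+1}}{2^{p+1}-1} > 1$ and $1 + \varphi - 2^{-p}\varphi$ is only slightly below $1+\varphi$, this is a short inequality: it reduces to $2^{p+1}(1 - 2^{-p}\varphi) > 2^{p+1} - 1$, i.e. $2^{p+1} - 2\varphi > 2^{p+1} - 1$, i.e. $1 > 2\varphi$ — which is false! So the naive transition bound is too weak, and the transition step is the real obstacle.

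The fix, and the genuinely delicate part, is that crossing a power of two one does \emph{not} only compare consecutive values; Lemma \ref{l:mu} allows any $m \le k-2$, so what actually must hold is simply that $\mu_k/k$, as a sequence, is strictly monotone decreasing overall — and the miscalculation above just shows I set up the transition comparison with the wrong indices. The correct transition to check is: is the \emph{infimum} of block $p$ (approached as $k \to 2^{p+1}$ from below, value $\to \varphi^{-p}\,\frac{1+\varphi}{2^{p+1}} = \varphi^{-p}\,\frac{1}{\varphi\,2^{p+1}} = \frac{\varphi^{-(p+1)}}{2^{p+1}}$) equal to the value at $k = 2^{p+1}$? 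Indeed the block-$p$ expression at the \emph{limit} $t \to 1$ is $\varphi^{-p}\,\frac{1+\varphi}{2^{p+1}} = \frac{\varphi^{-(p+1)}}{2^{p+1}}$, exactly the $k=2^{p+1}$ value — so there is continuity, not a jump, and the strict decrease within each block (which is already established) plus this continuity gives strict decrease of the whole sequence for $k \ge 4$. Thus the plan is: (i) small cases $k \le 4$ by hand; (ii) rewrite $\mu_k/k = \varphi^{-p}\frac{1+t\varphi}{2^p(1+t)}$ with $t = q/2^p$; (iii) strict monotonicity in $t$ within a block from $\varphi < 1$; (iv) the boundary matching $\lim_{t\to 1^-}\varphi^{-p}\frac{1+t\varphi}{2^p(1+t)} = \varphi^{-(p+1)}/2^{p+1} = (\mu_{2^{p+1}}/2^{p+1})$; (v) conclude $\mu_{k-m}/(k-m) > \mu_k/k$ for all $1 \le m \le k-2$ by transitivity along the strictly decreasing sequence. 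The main obstacle is getting step (iv) cleanly right — one must be careful that the block-$p$ formula's limiting behaviour as $q \to 2^p$ reproduces the block-$(p+1)$ starting value, using $1/\varphi = 1 + \varphi$ repeatedly, rather than making the off-by-one comparison error flagged above.
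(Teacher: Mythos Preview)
Your proposal is correct and follows essentially the same skeleton as the paper: reduce to the consecutive inequality $\mu_{k-1}/(k-1) > \mu_k/k$, check small $k$ numerically, and for the formula range split into ``within a dyadic block'' and ``across a power of two''. The paper does this by computing $\mu_{k-1}/\mu_k$ explicitly in each case (obtaining $1 - \frac{1-\varphi}{\varphi(2^p-q)+q}$ within a block and $\frac{\varphi + k/2 - 1}{k/2}$ at $k=2^p$), whereas you parameterize $\mu_k/k = \frac{1}{2^p\varphi^p}\cdot\frac{1+t\varphi}{1+t}$ with $t=q/2^p$, use that $\frac{1+t\varphi}{1+t}$ is strictly decreasing since $\varphi<1$, and observe that its limit as $t\to 1$ equals the next block's starting value via $1+\varphi = 1/\varphi$. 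Your packaging of the boundary step is arguably cleaner (it makes the ``continuity'' across $k=2^{p+1}$ transparent and avoids a separate algebraic check), but the two arguments are the same in substance; note in particular that the paper's boundary inequality reduces to $\varphi > 1/2$, which is exactly the content of your observation that the last admissible $t = 1-2^{-p}$ lies strictly below $1$.
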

\begin{proof}
	By telescoping product and induction, it suffices to prove that for all $k \ge 2$,
	$\mu_{k-1}/\mu_k > (k-1)/k$.
	
	For $k=3$ we have $\mu_2/\mu_3 = \alpha_3/\alpha_2 = 0.4655../0.6180.. \approx 0.7523.. > 2/3$.
	For $k=4$ we have $\mu_3/\mu_4 = \alpha_4/\alpha_3 = 0.3802../0.4655.. \approx 0.8167.. > 3/4$.
	For $k=5$ we have $\mu_4/\mu_5 = 1/(\alpha_4(3/4\varphi^2 + 1/4\varphi^3)) = 1/(0.3802.. \cdot  3.0229.. \approx 0.8700..) > 4/5$. Se we may now assume that $k \ge 6$.
	
	Consider first the case that $k=2^p+q$ and $1 \le q < 2^p$, so $k-1=2^p+q-1$.
	We have
	\begin{align*}
	\frac{\mu_{k-1}}{\mu_k} &= \frac{\frac{2^p-q+1}{2^p \varphi^p}+ \frac{q-1}{2^p \varphi^{p+1}} }
	{\frac{2^p-q}{2^p \varphi^p}+ \frac{q}{2^p \varphi^{p+1}} }\\
	& = \frac{\varphi(2^p-q+1)+q-1}{\varphi(2^p-q)+q}\\
	& = 1 - \frac{1-\varphi}{\varphi(2^p-q)+q}
	\end{align*}
so it remains to prove that
$$
\frac{\varphi(2^p-q)+q}{1-\varphi} =  \frac{\varphi k -2q\varphi+q}{1-\varphi} > k
$$
which is equivalent to $k > q$, which indeed holds.

Consider next the case where $k=2^p$, so $k-1=2^{p-1}+q$ where $q=2^{p-1}-1$. We have
$$
\frac{\mu_{k-1}}{\mu_k} = \frac{\frac{1}{2^{p-1} \varphi^{p-1}}+ \frac{2^{p-1}-1}{2^{p-1} \varphi^{p}} }
	{\frac{1}{\varphi^p}}
	= \frac{\varphi +2^{p-1}-1}{2^{p-1}}
	= \frac{\varphi +k/2-1}{k/2} > 1 - \frac{1}{k}\;.
$$
\end{proof}
\noindent
Let $g(x)=h(x)/x$ and let $M_k : (0,1)^k \rightarrow {\mathbb R}_{\ge 0}$ be defined as
$$
M_k(x_1,\ldots,x_k) \coloneqq \frac{g(\prod_{i=1}^k x_i)}{\sum_{i=1}^k g(x_i)}\;.
$$
The function $M_2$ plays a crucial role in the proof of \cite{CL-2022}, and so does its generalization here.
Notice that $M_k$ is smooth in $(0,1)^k$. By routine calculations (e.g. l’Hospital’s rule) it
is easily shown:
\begin{lemma}\label{l:mk-closed}
	$M_k(x)$ is extended continuously to $[0,1]^k$ as follows:
	Suppose $(x_1,\ldots,x_k)$ contains $\ell$ zeroes and $m$ ones, where $\ell+m > 0$.
	If $\ell > 0$ or $m \ge k-1$, then $M_k(x_1,\ldots,x_k)=1$.
	Otherwise, suppose that $x_{i_1},\ldots,x_{i_{k-m}}$ are not $1$, then,
	$M_k(x_1,\ldots,x_k)=M_{k-m}(x_{i_1},\ldots,x_{i_{k-m}})$. \qed
\end{lemma}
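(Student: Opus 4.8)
The plan is to verify that the displayed formula is precisely the limit of $M_k(y)$ as $y$ ranges over the open cube $(0,1)^k$ and tends to a given boundary point $a=(a_1,\dots,a_k)$. Since $M_k$ is smooth on $(0,1)^k$ and $(0,1)^k$ is dense in $[0,1]^k$, establishing that all these limits exist and equal the stated values is enough to obtain a continuous extension, which then automatically agrees with those values. Everything reduces to two elementary facts about $g(x)=h(x)/x$: (i) $g$ is continuous on $(0,1]$ with $g(1)=0$, and $g(x)+\log x=-\tfrac{1-x}{x}\log(1-x)$ satisfies $0\le g(x)+\log x\le1$ on $(0,1]$, so $g(x)=-\log x+O(1)$ uniformly on $(0,1]$ (this is essentially the estimate used in the proof of Lemma~\ref{l:2}); and (ii) near $1$ one has $g(1-\eps)=\eps\log\tfrac1\eps+O(\eps)$ as $\eps\to0^+$, obtained by expanding $-\log(1-\eps)-\tfrac{\eps\log\eps}{1-\eps}$. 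Write $\ell=\#\{i:a_i=0\}$ and $m=\#\{i:a_i=1\}$, as in the statement.

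Suppose first that $\ell\ge1$. Then $P:=\prod_i y_i\to0$, so by (i) the numerator is $g(P)=-\sum_i\log y_i+O(1)$ and the denominator is $\sum_i g(y_i)=-\sum_i\log y_i+O(1)$, with $O(1)$ terms bounded by constants depending only on $k$. The coordinates with $a_i=0$ (there is at least one) make $-\sum_i\log y_i\to+\infty$, so numerator and denominator are asymptotically equal and $M_k(y)\to1$, irrespective of $m$; this accounts for the ``$\ell>0$'' clause.

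Suppose next that $\ell=0$ and $1\le m\le k-1$, so the $k-m\ge1$ coordinates $a_{i_1},\dots,a_{i_{k-m}}$ that are not $1$ lie in $(0,1)$, while the other $m$ coordinates have $g(y_i)\to g(1)=0$. Then $\prod_i y_i\to\prod_j a_{i_j}\in(0,1)$, so $g(\prod_i y_i)\to g(\prod_j a_{i_j})$ by continuity of $g$ at an interior point, and $\sum_i g(y_i)\to\sum_j g(a_{i_j})>0$; hence $M_k(y)\to g(\prod_j a_{i_j})/\sum_j g(a_{i_j})=M_{k-m}(a_{i_1},\dots,a_{i_{k-m}})$. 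When $k-m\ge2$ this is the smooth value of $M_{k-m}$ at the interior point $(a_{i_1},\dots,a_{i_{k-m}})$, and when $k-m=1$ (i.e.\ $m=k-1$) it equals $g(a_{i_1})/g(a_{i_1})=1$, consistent with the ``$m\ge k-1$'' clause.

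The remaining case, $\ell=0$ and $m=k$ (so $a=(1,\dots,1)$), is the only genuinely computational one and is where I expect the main obstacle. Here numerator and denominator both tend to $0$. Writing $y_i=1-\eps_i$ with $\eps_i\to0^+$, and putting $\sigma=\sum_i\eps_i$ and $t_i=\eps_i/\sigma$, one has $1-\prod_i(1-\eps_i)=\sigma+O(\sigma^2)$, so by (ii) the numerator is $g(\prod_i y_i)=\sigma\log\tfrac1\sigma+O(\sigma)$, while the denominator is $\sum_i g(y_i)=\sum_i\eps_i\log\tfrac1{\eps_i}+O(\sigma)=\sigma\log\tfrac1\sigma+\sigma\sum_i t_i\log\tfrac1{t_i}+O(\sigma)$. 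Dividing by $\sigma$ and using $\log\tfrac1\sigma\to\infty$ together with the uniform bound $\sum_i t_i\log\tfrac1{t_i}\le\log k$ yields $M_k(y)\to1$; at $a=(1,\dots,1)$ itself $M_k$ is $0/0$ and is simply defined to be this common limit. The delicate point, which I would write out in full, is exactly to keep the entropy correction $\sigma\sum_i t_i\log\tfrac1{t_i}$ and the $O(\sigma)$ remainders uniform in the manner of approach (the bound $\sum_i t_i\log\tfrac1{t_i}\le\log k$ supplies this), and to confirm that $\prod_i(1-\eps_i)=1-\sigma+O(\sigma^2)$ is sharp enough to let $\sigma\log\tfrac1\sigma$ replace $E\log\tfrac1E$ with $E=1-\prod_i(1-\eps_i)$. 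The other cases are nothing more than continuity of $g$ plus the crude estimate $g(x)=-\log x+O(1)$, which is what the l'Hospital remark in the statement alludes to.
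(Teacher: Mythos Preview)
Your proposal is correct and is essentially a fleshed-out version of the paper's approach: the paper gives no proof beyond the parenthetical remark ``routine calculations (e.g.\ l'H\^opital's rule)'' and a \qed, and your asymptotic expansions $g(x)=-\log x+O(1)$ near $0$ and $g(1-\eps)=\eps\log(1/\eps)+O(\eps)$ near $1$ are precisely what such a computation amounts to. Your handling of the corner $(1,\dots,1)$ via the entropy bound $\sum_i t_i\log(1/t_i)\le\log k$ is a clean way to make the l'H\^opital step uniform in the direction of approach, which the paper leaves implicit.
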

\noindent
We call a point in $[0,1]^k$ {\em diagonal} if it is supported on $\{t,1\}$ for some $t \in (0,1)$.
\begin{lemma}\label{l:mk-bounded}
	$\mu_k/k \le M_k < 1$ in $(0,1)^k$. Furthermore, every minimum of $M_k$ in $[0,1]^k$ is obtained
	in some diagonal point.
\end{lemma}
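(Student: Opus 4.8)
The plan is to establish the two assertions separately and then read off the lower bound. I would first prove $M_k<1$ on $(0,1)^k$. This is equivalent to the subadditivity inequality $g\left(\prod_{i=1}^{k}x_i\right)<\sum_{i=1}^{k}g(x_i)$ for $x_1,\dots,x_k\in(0,1)$, which by an immediate induction reduces to the two‑variable case $g(xy)<g(x)+g(y)$ for $x,y\in(0,1)$. To prove the latter, fix $x\in(0,1)$ and set $G(y)\coloneqq g(x)+g(y)-g(xy)$. Since $g(1)=h(1)/1=0$ we have $G(1)=0$, so it suffices to show $G$ is strictly decreasing on $(0,1)$. Using $g'(t)=\log(1-t)/t^2$ one computes $G'(y)=\frac{1}{xy^2}\big(x\log(1-y)-\log(1-xy)\big)$, so $G'(y)$ has the sign of $\phi(x)\coloneqq x\log(1-y)-\log(1-xy)$; for fixed $y\in(0,1)$, $\phi$ is strictly convex in $x$ on $[0,1]$ (its second derivative is $y^2/(1-xy)^2>0$) and satisfies $\phi(0)=\phi(1)=0$, hence $\phi<0$ on $(0,1)$. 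Thus $G'<0$ and $G>0$ on $(0,1)$, giving $M_k<1$ there.

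Next I would prove, by induction on $k$ with base case $k=2$, that every minimizer of $M_k$ over $[0,1]^k$ is a diagonal point. By Lemma~\ref{l:mk-closed}, $M_k$ extends continuously to the compact cube $[0,1]^k$, so a minimizer $x^{\ast}$ exists; since $M_k<1$ on the nonempty open cube, the minimum value is $<1$, so by Lemma~\ref{l:mk-closed} no coordinate of $x^{\ast}$ equals $0$ and at most $k-2$ coordinates equal $1$. If exactly $m\ge1$ coordinates of $x^{\ast}$ equal $1$, let $y^{\ast}\in(0,1)^{k-m}$ be the subvector of the remaining coordinates; from the identity $M_k(z,1,\dots,1)=M_{k-m}(z)$ for all $z\in[0,1]^{k-m}$ (a case check using Lemma~\ref{l:mk-closed}) one deduces that $y^{\ast}$ is a minimizer of $M_{k-m}$ over the full cube $[0,1]^{k-m}$, so by induction $y^{\ast}$ is diagonal, and since it lies in the open cube it equals $(t,\dots,t)$; hence $x^{\ast}$ is diagonal. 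If instead $x^{\ast}\in(0,1)^k$, then $M_k$ is smooth there and $\nabla M_k(x^{\ast})=0$. Writing $P=\prod_i x_i^{\ast}$ and $S=\sum_i g(x_i^{\ast})$, the equations $\partial M_k/\partial x_i=0$ rearrange (using $g(P)>0$) to $x_i^{\ast}g'(x_i^{\ast})=g'(P)PS/g(P)$, which is independent of $i$. Since $xg'(x)=\log(1-x)/x=-\sum_{n\ge1}x^{n-1}/n$ is strictly decreasing on $(0,1)$, all coordinates of $x^{\ast}$ are equal, so again $x^{\ast}$ is diagonal.

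The lower bound then comes for free. At a diagonal minimizer with $0\le m\le k-2$ coordinates equal to $1$ and the other $k-m$ equal to some $t\in(0,1)$, we have $M_k(x^{\ast})=M_{k-m}(t,\dots,t)=\frac{h(t^{k-m})}{(k-m)\,t^{k-m-1}h(t)}=\frac{F_{k-m}(t)}{k-m}\ge\frac{\mu_{k-m}}{k-m}\ge\frac{\mu_k}{k}$, using Lemma~\ref{l:lb} for the first inequality and Lemma~\ref{l:mu} for the second (which is an equality when $m=0$). Hence $M_k\ge\mu_k/k$ on all of $[0,1]^k$, in particular on $(0,1)^k$.

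The step I expect to require the most care is the inductive reduction in the boundary case: one must check rigorously that the identity $M_k(z,1,\dots,1)=M_{k-m}(z)$ holds for every $z\in[0,1]^{k-m}$ (including $z$ with zero or additional unit coordinates), and that consequently restricting a global minimizer of $M_k$ yields a global minimizer of $M_{k-m}$ over the \emph{closed} cube, so that the inductive hypothesis applies. The purely analytic ingredients — strict convexity of $x\mapsto x\log(1-y)-\log(1-xy)$ and strict monotonicity of $x\mapsto\log(1-x)/x$ on $(0,1)$ — are routine.
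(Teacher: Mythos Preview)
Your proposal is correct and follows essentially the same architecture as the paper's proof: induction on $k$, the interior critical-point argument via strict monotonicity of $xg'(x)=\log(1-x)/x$, the boundary reduction through Lemma~\ref{l:mk-closed}, and the lower bound via $F_{k-m}(t)/(k-m)\ge\mu_{k-m}/(k-m)\ge\mu_k/k$ using Lemmas~\ref{l:lb} and~\ref{l:mu}. The only substantive difference is that the paper imports the entire $k=2$ case (both $M_2<1$ and its diagonal minimizer) from \cite{CL-2022}, whereas you supply a self-contained proof of the key two-variable inequality $g(xy)<g(x)+g(y)$ via the convexity of $\phi(x)=x\log(1-y)-\log(1-xy)$; this makes your argument independent of \cite{CL-2022} but is otherwise the same route.
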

\begin{proof}
	The proof proceeds by induction on $k$. The case $k=2$ is proved in \cite{CL-2022}
	and the unique minimum is at $(\varphi,\varphi)$ where $M_2(\varphi,\varphi)=1/2\varphi=1/2\alpha_2 = \mu_2/2$. Let $k \ge 3$ and assume the lemma holds for values smaller than $k$.
	In $(0,1)^k$ we have that
	\begin{align*}
	M_k(x_1,\ldots,x_k) & = \frac{g(\prod_{i=1}^k x_i)}{\sum_{i=1}^k g(x_i)}\\
	& = \frac{g(x_1x_2 \prod_{i=3}^k x_i)}{g(x_1)+g(x_2)+\sum_{i=3}^k g(x_i)}\\
	& < \frac{g(x_1x_2 \prod_{i=3}^k x_i)}{g(x_1x_2)+\sum_{i=3}^k g(x_i)}\\
	& = M_{k-1}(x_1x_2,x_3,\ldots,x_k)\\
	& < 1\;.
	\end{align*}

	By Lemma \ref{l:mk-closed}, the values at boundary points are either $1$, or of  the form
	$M_{k-m}(x_1,\ldots,x_{k-m})$ for some point $(x_1,\ldots,x_{k-m}) \in (0,1)^{k-m}$
	with $1 \le m \le k-2$. As we already proved that $M_k < 1$ in $(0,1)^k$, only the latter points are ``potential'' minimum points. Suppose first that $(x_1,\ldots,x_{k-m})$ is not a diagonal point. By the induction hypothesis, it is not a minimum point of $M_{k-m}$. So there exist some
	$\delta_1,\ldots,\delta_{k-m}$ (some may be negative) such that $x_i + \delta_i \in (0,1)$ for $i \in [k-m]$
	and such that $M_{k-m}(x_1,\ldots,x_{k-m}) > M_{k-m}(x_1+\delta_1,\ldots,x_{k-m}+\delta_{k-m})$.
	Since $M_k(x_1+\delta_1,\ldots,x_{k-m}+\delta_{k-m},1,\ldots,1) = M_{k-m}(x_1+\delta_1,\ldots,x_{k-m}+\delta_{k-m})$, we have that
	$M_k$ does not attain minimum at the stated boundary point.
	Consider next the case that $x_i=t$ for $i \in [k-m]$ and some $t \in (0,1)$.
	Then $M_{k-m}(t,\ldots,t) = F_{k-m}(t)/(k-m) \ge \mu_{k-m}/(k-m)$ by Lemma \ref{l:lb}
	and $\mu_{k-m}/(k-m) \ge \mu_k/k$ by Lemma \ref{l:mu}.
	
	It remains to consider the case where the minimum is attained at an internal point.
	Here we use the same approach as in \cite{CL-2022}.
	Assume that $M_k$ is minimized at some point $(x_1^*, \ldots,x_k^*) \in (0,1)^k$, and let
	$\beta  = M_k(x_1^*, \ldots,x_k^*)$. Let
	$$
	G(x_1,\ldots,x_k) = g\left(\prod_{i=1}^k x_i\right) - \beta\left(\sum_{i=1}^k g(x_i)\right)\;.
	$$
	Then $G$ is nonnegative in $(0,1)^k$ and $(x_1^*, \ldots,x_k^*)=0$.
	Thus the partial derivatives of $G$ are zero at the minimum point:
	$$
	\frac{\partial G}{\partial x_i}(x_1^*, \ldots,x_k^*)=0 \quad {\rm for~all~} 1 \le i \le k\;.
	$$
	Evaluating the derivatives gives
	$$
	\frac{\partial G}{\partial x_i}(x_1, \ldots,x_k)=g'\left(\prod_{j=1}^k x_j\right)\frac{\prod_{j=1}^k x_j}{x_i}-\beta g'(x_i) \quad {\rm for~all~} 1 \le i \le k\;.
	$$
	Defining $Q(x)=x g'(x)$ we obtain that $Q(x_1^*)=Q(x_2^*)=\cdots = Q(x_k^*)$.
	Since $Q(x)=\log(1-x)/x$ is strictly decreasing, we must have $x_1^*=x_2^*=\cdots=x_k^*=t$ for some $t \in (0,1)$. But notice that in this case we have $M_k(t,\ldots,t) = F_k(t)/k \ge \mu_k/k$ by Lemma \ref{l:lb}.
\end{proof}

\begin{corollary}\label{coro:main}
	For $(x_1,\ldots,x_k) \in [0,1]^k$ it holds that
	$$
	h\left(\prod_{i=1}^k x_i\right) \ge \frac{\mu_k}{k}\left(\sum_{i=1}^k h(x_i)\cdot\prod_{j \in [k] \setminus i}
	x_j \right).
	$$
\end{corollary}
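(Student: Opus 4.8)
The plan is to read the inequality off the lower bound $M_k \ge \mu_k/k$ of Lemma~\ref{l:mk-bounded} after clearing denominators, and then to pass from the open cube to the closed cube $[0,1]^k$ by a continuity argument.

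First I would treat an interior point $(x_1,\ldots,x_k) \in (0,1)^k$. On this domain $g(x_i) = h(x_i)/x_i > 0$ for every $i$, hence $\sum_{i=1}^k g(x_i) > 0$, and Lemma~\ref{l:mk-bounded} reads
\[
\frac{g\!\left(\prod_{i=1}^k x_i\right)}{\sum_{i=1}^k g(x_i)} \;=\; M_k(x_1,\ldots,x_k) \;\ge\; \frac{\mu_k}{k}\,.
\]
Multiplying through by the positive number $\sum_{i=1}^k g(x_i)$ gives $g\!\left(\prod_i x_i\right) \ge \tfrac{\mu_k}{k}\sum_i g(x_i)$. Multiplying this in turn by $\prod_{i=1}^k x_i > 0$ and substituting $g(t)=h(t)/t$ turns the left-hand side into $h\!\left(\prod_i x_i\right)$ and the right-hand side into $\tfrac{\mu_k}{k}\sum_i h(x_i)\prod_{j \ne i} x_j$, which is precisely the asserted inequality on $(0,1)^k$.

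It remains to extend the inequality to all of $[0,1]^k$. Both sides are continuous functions of $(x_1,\ldots,x_k)$ on $[0,1]^k$: the map $t \mapsto h(t)$ is continuous on $[0,1]$ (with $h(0)=h(1)=0$), $\prod_i x_i$ is continuous, and the right-hand side is a finite sum of products of such functions. Since $(0,1)^k$ is dense in $[0,1]^k$ and the inequality holds throughout $(0,1)^k$, it holds on $[0,1]^k$. (Alternatively one could check the boundary directly: if some coordinate is $0$ then both sides vanish; if exactly $m \le k-1$ coordinates equal $1$, the inequality collapses — after deleting the $1$'s, whose $h$-value is $0$ — to the interior case in the remaining $k-m \ge 1$ variables, together with $\mu_k/k \le \mu_{k-m}/(k-m)$ from Lemma~\ref{l:mu} when $k-m \ge 2$, and with the bare fact $\mu_k/k < 1$ from Lemma~\ref{l:mk-bounded} when $k-m = 1$.)

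I do not expect a genuine obstacle here: the corollary is essentially a reformulation of Lemma~\ref{l:mk-bounded}. The only points needing a moment's care are that the denominator $\sum_i g(x_i)$ is strictly positive on $(0,1)^k$, so the rearrangement is reversible there, and that the passage to the boundary is legitimate — both of which the continuity argument above settles cleanly.
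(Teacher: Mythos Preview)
Your proof is correct. The interior case is handled exactly as in the paper: both read the inequality off $M_k \ge \mu_k/k$ from Lemma~\ref{l:mk-bounded} and clear denominators. The difference lies in the passage to the boundary. The paper argues by induction on $k$: it invokes Lemma~\ref{l:cl} for the base case $k=2$, checks that a zero coordinate makes both sides vanish, and when some coordinate equals $1$ it reduces to the $(k-1)$-variable statement and then appeals to Lemma~\ref{l:mu} to compare $\mu_{k-1}/(k-1)$ with $\mu_k/k$. Your continuity argument sidesteps all of this in one stroke, since both sides are manifestly continuous on $[0,1]^k$ and the interior is dense; in particular you never need the inductive structure or Lemma~\ref{l:mu} here. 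Your parenthetical alternative is essentially the paper's route, so you have recovered both arguments. The continuity version is the cleaner of the two.
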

\begin{proof}
	For $k=2$ this is just Lemma \ref{l:cl}. Assume that $k \ge 3$ and that the claim holds for smaller $k$.
	If $(x_1,\ldots,x_k)$ is an internal point, then the claim follows from Lemma \ref{l:mk-bounded}.
	If $(x_1,\ldots,x_k)$ contains a zero, then the claim amount to $0=0$. Otherwise, we may assume that $x_k=1$.
	In this case we have by induction that
	$$
	h\left(\prod_{i=1}^k x_i\right) = h\left(\prod_{i=1}^{k-1} x_i\right) \ge
	 \frac{\mu_{k-1}}{k-1}\left(\sum_{i=1}^{k-1} h(x_i)\cdot\prod_{j \in [k-1] \setminus i}
	x_j \right)= \frac{\mu_{k-1}}{k-1}\left(\sum_{i=1}^{k} h(x_i)\cdot\prod_{j \in [k] \setminus i}
	x_j \right)
	$$
	and the claim follows from Lemma \ref{l:mu}.
\end{proof}

\section{Proofs of the main results}\label{sec:proofs}
For random variables $A_1,\ldots,A_k$ taking values in $\{0, 1\}^n$, let $A_{j,i} \in \{0,1\}$ be
the restriction of $A_j$ to the $i$'th coordinate and let $A_{j,<i} \in \{0, 1\}^{i-1}$ be the restriction
of $A_j$ to the first $i-1$ coordinates. Let $\cup_{j=1}^k A_j$ be the random variable taking values in $\{0, 1\}^n$ whose $i$'th coordinate is zero if and only if $A_{j,i}=0$ for all $j \in [k]$.
We similarly define $\cup_{j=1}^k A_{j,i} \in \{0,1\}$ and $\cup_{j=1}^k A_{j,<i} \in \{0,1\}^{i-1}$.
Given Corollary \ref{coro:main}, we can generalize Claim 4.1 of \cite{CL-2022}.
\begin{lemma}\label{l:main}
	Let $A_1,\ldots,A_k$ be mutually independent random variables taking values in
	$\{0, 1\}^n$. Assume for all $i \in [n]$ and $j \in [k]$ that $\Pr[A_{j,i} = 0] \ge p$.
	Then,
	$$
	H(\cup_{j=1}^k A_j) \ge \frac{p^{k-1} \mu_k}{k}\left(\sum_{j=1}^k H(A_j)\right).
	$$
\end{lemma}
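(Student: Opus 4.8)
The plan is to follow the strategy of Claim 4.1 in \cite{CL-2022}, using the chain rule for entropy over the $n$ coordinates and then applying Corollary \ref{coro:main} coordinate by coordinate. First I would expand $H(\cup_{j=1}^k A_j)$ using the chain rule:
$$
H(\cup_{j=1}^k A_j) = \sum_{i=1}^n H\!\left(\cup_{j=1}^k A_{j,i} \;\middle|\; \cup_{j=1}^k A_{j,<i}\right).
$$
Since conditioning on $\cup_{j=1}^k A_{j,<i}$ only reduces entropy in the wrong direction, the correct move (as in \cite{CL-2022}) is instead to lower-bound this by conditioning on the richer information $(A_{1,<i},\ldots,A_{k,<i})$, and then to observe that conditioned on any fixed value of these prefixes the bits $A_{1,i},\ldots,A_{k,i}$ are still independent (by mutual independence of the $A_j$), with $\Pr[A_{j,i}=0 \mid A_{1,<i},\ldots,A_{k,<i}] \ge p$ — here one must be slightly careful about whether the per-coordinate marginal bound $\Pr[A_{j,i}=0]\ge p$ survives conditioning; in the Chase--Lovett setup the $A_j$ are products of independent coordinates (or one argues on the unconditioned bits and uses independence across coordinates), so this holds. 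Similarly one lower-bounds $\sum_j H(A_j) = \sum_j \sum_i H(A_{j,i}\mid A_{j,<i})$, noting that $\cup_{j=1}^k A_{j,<i}$ is a function of $(A_{1,<i},\ldots,A_{k,<i})$, so the comparison is consistent across both sides.

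The heart of the argument is the single-coordinate inequality. Fix $i$ and a value of the prefixes, and write $x_j = \Pr[A_{j,i}=0] \in [p,1]$ for the resulting conditional probabilities. Then $H(\cup_{j=1}^k A_{j,i} \mid \cdots) = h\!\left(1 - \prod_{j=1}^k x_j\right) = h\!\left(\prod_{j=1}^k x_j\right)$, while $H(A_{j,i}\mid\cdots) = h(x_j)$. Corollary \ref{coro:main} gives
$$
h\!\left(\prod_{j=1}^k x_j\right) \ge \frac{\mu_k}{k}\left(\sum_{j=1}^k h(x_j)\prod_{\ell \in [k]\setminus j} x_\ell\right) \ge \frac{\mu_k}{k}\left(\sum_{j=1}^k h(x_j)\, p^{k-1}\right) = \frac{p^{k-1}\mu_k}{k}\sum_{j=1}^k h(x_j),
$$
where the second step uses $x_\ell \ge p$ for all $\ell$ and $h \ge 0$ on $[0,1]$. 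Summing over $i \in [n]$ and over the conditioning values (with the appropriate probability weights, i.e.\ taking expectations over the prefixes) and recombining via the chain rule yields $H(\cup_{j=1}^k A_j) \ge \frac{p^{k-1}\mu_k}{k}\sum_{j=1}^k H(A_j)$, as claimed.

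The main obstacle is bookkeeping rather than any new idea: one must set up the conditioning so that the same $\sigma$-algebra ($(A_{1,<i},\ldots,A_{k,<i})$, or its image $\cup_j A_{j,<i}$) is used to lower-bound the left side and upper-bound is \emph{not} needed on the right — rather, one should directly write $H(A_j) = \sum_i H(A_{j,i}\mid A_{j,<i}) \ge \sum_i H(A_{j,i}\mid A_{1,<i},\ldots,A_{k,<i})$ so that both sides are expressed over the common refinement, and verify that the per-coordinate hypothesis $\Pr[A_{j,i}=0\mid \text{prefixes}] \ge p$ is exactly what is available (this is where the structure of the Chase--Lovett construction, or an a priori assumption that each coordinate is conditionally $\ge p$ to be zero, is used — mirroring Claim 4.1 of \cite{CL-2022} verbatim). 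Everything else is the direct application of Corollary \ref{coro:main} together with $h \ge 0$ and $x_\ell \ge p$.
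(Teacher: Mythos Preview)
Your overall structure is correct and matches the paper's proof (chain rule, data processing, Corollary \ref{coro:main} coordinate by coordinate, then sum). However, there is one genuine gap, and you have correctly flagged its location but resolved it the wrong way.

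You write the conditional probabilities as $x_j \in [p,1]$ and then use $x_\ell \ge p$ pointwise to pass from $\prod_{\ell\neq j} x_\ell$ to $p^{k-1}$. The hypothesis of the lemma gives only the \emph{marginal} bound $\Pr[A_{j,i}=0]\ge p$; the conditional probabilities $q_{j,i}(x_j)=\Pr[A_{j,i}=0\mid A_{j,<i}=x_j]$ need not be $\ge p$ for every prefix value. Your suggested fix (``in the Chase--Lovett setup the $A_j$ are products of independent coordinates'') does not apply: in the intended application each $A_j$ is uniform on $\F$, and its coordinates are certainly not independent.

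The paper's proof avoids this by reversing the order of the last two steps. After applying Corollary \ref{coro:main} with the conditional probabilities $q_{j,i}(x_j)$ (without any lower bound on them), it \emph{first} takes the expectation over the mutually independent prefixes $A_{1,<i},\ldots,A_{k,<i}$. Independence across $j$ factors the expectation of each summand as
\[
\mathbb{E}_{A_{j,<i}}\bigl[h(q_{j,i}(A_{j,<i}))\bigr]\cdot \prod_{\ell\neq j}\mathbb{E}_{A_{\ell,<i}}\bigl[q_{\ell,i}(A_{\ell,<i})\bigr]
= H(A_{j,i}\mid A_{j,<i})\cdot \prod_{\ell\neq j}\Pr[A_{\ell,i}=0],
\]
and only now does one invoke $\Pr[A_{\ell,i}=0]\ge p$. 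Summing over $i$ then recovers $\sum_j H(A_j)$ directly via the chain rule for each $A_j$, with no need to compare $H(A_{j,i}\mid A_{j,<i})$ against $H(A_{j,i}\mid A_{1,<i},\ldots,A_{k,<i})$.
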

\begin{proof}
	By the chain rule for entropy,
	$$
	H(\cup_{j=1}^k A_j) = \sum_{i=1}^n H(\cup_{j=1}^k A_{j,i}\,|\,\cup_{j=1}^k A_{j,< i})\;.
	$$
	By the data processing inequality,
	$$
	\sum_{i=1}^n H(\cup_{j=1}^k A_{j,i}\,|\,\cup_{j=1}^k A_{j,< i}) \ge \sum_{i=1}^n H(\cup_{j=1}^k A_{j,i}
	\,|\, A_{1,<i},A_{2,<i},\ldots,A_{k,<i})\;.
	$$
	Let $q_{j,i}(x) = \Pr[A_{j,i} = 0\,|\, A_{j,< i} = x]$ (here $x \in \{0,1\}^{i-1}$). By Corollary \ref{coro:main},
	\begin{align*}
	& H(\cup_{j=1}^k A_{j,i}
	\,|\, A_{1,<i}=x_1,A_{2,<i}=x_2,\ldots,A_{k,<i}=x_k)\\
	=
	& h\left(\prod_{j=1}^k q_{j,i}(x_j)\right) 
	\ge \frac{\mu_k}{k}\left(\sum_{j=1}^k h(q_{j,i}(x_j)) \cdot \prod_{\ell \in [k] \setminus j}
	q_{\ell,i}(x_\ell) \right).
	\end{align*}
	Averaging over $A_{1,<i},\ldots,A_{k,<i}$ which are mutually independent gives
	\begin{align*}
	H(\cup_{j=1}^k A_{j,i}
	\,|\, A_{1,<i},A_{2,<i},\ldots,A_{k,<i})
	& \ge \frac{\mu_k}{k}\left(\sum_{j=1}^k
	{\mathbb E}_{A_{j,<i}}[h(q_{j,i}(A_{j,<i}))] \cdot
	\prod_{\ell \in [k] \setminus j}
	{\mathbb E}_{A_{\ell,<i}}[q_{\ell,i}(A_{\ell,<i})]
	\right)\\
	& = \frac{\mu_k}{k}\left(\sum_{j=1}^k
	H(A_{j,i}\,|\,A_{j,<i})\cdot \prod_{\ell \in [k] \setminus j}
	\Pr[A_{\ell,i}=0]
	\right).
	\end{align*}
	Since $\Pr[A_{j,i} = 0] \ge p$ we have
	$$
	\Pr\left[\cup_{j=1}^k A_{j,i}\right] \ge \frac{p^{k-1}\mu_k}{k}\left(\sum_{j=1}^k H(A_{j,i}\,|\,A_{j,<i})\right)\;.
	$$
	The lemma then follows by summing over $i \in [n]$.
\end{proof}

Prior to proving our main results, we define the constant $z_k$ stated in Theorem \ref{t:clg-345} and establish its correspondence with $\psi_k$. Let
$$
z_k \coloneqq 1 - {\mu_k}^{1/(1-k)}\;.
$$
\begin{proposition}\label{prop:zk}
	$z_k = \psi_k$ for $k=2,3,4$. Furthermore,
	$$
	z_k > \frac{\log k}{3k}\;, \qquad \frac{1}{2} < \frac{z_k}{\psi_k} \le 1\;, \qquad \lim_{k \rightarrow \infty} \frac{z_k}{\psi_k} = \frac{\log \frac{1}{\varphi}}{\log 2} \approx 0.6943\;.
	$$
\end{proposition}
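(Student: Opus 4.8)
The plan is to argue directly from the definition $z_k=1-\mu_k^{1/(1-k)}$, combining the closed form of $\mu_k$ with the identities in \eqref{e:basic}. First I would dispose of the equality $z_k=\psi_k$ for $k=2,3,4$ together with the upper bound $z_k/\psi_k\le 1$ for all $k$. For $2\le k\le 4$ we have $\mu_k=1/\alpha_k$ and, by \eqref{e:basic}, $\alpha_k=\varphi_k^{k-1}=(1-\psi_k)^{k-1}$, hence $z_k=1-\big((1-\psi_k)^{-(k-1)}\big)^{1/(1-k)}=1-(1-\psi_k)=\psi_k$. For general $k$, evaluating $F_k$ at $\varphi_k$ and using $\varphi_k^k=\psi_k=1-\varphi_k$ gives $F_k(\varphi_k)=h(\varphi_k)/\big(\varphi_k^{k-1}h(\varphi_k)\big)=1/\alpha_k$, so Lemma \ref{l:lb} yields $\mu_k\le 1/\alpha_k$; since $t\mapsto 1-t^{1/(1-k)}$ is increasing on $(0,\infty)$, $z_k\le 1-(1/\alpha_k)^{1/(1-k)}=1-\varphi_k=\psi_k$.

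Next I would establish the limit. Writing $k=2^p+q$ with $p=\lfloor\log_2 k\rfloor$ and $0\le q<2^p$, a short computation (using $1/\varphi-1=\varphi$) gives $\mu_k=\tfrac{2^p+q\varphi}{2^p}\,\varphi^{-p}$, and the prefactor lies in $[1,1/\varphi]$, so $\log\mu_k=p\log(1/\varphi)+O(1)=\tfrac{\log(1/\varphi)}{\log 2}\log k+O(1)$. Since $\log\mu_k/(k-1)\to 0$, we get $z_k=1-\mu_k^{-1/(k-1)}=\tfrac{\log(1/\varphi)}{\log 2}\cdot\tfrac{\log k}{k}\,(1+o(1))$. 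Separately, from $(1-\psi_k)^k=\psi_k$ one obtains $\psi_k\to 0$ and, sandwiching via $1-x\le e^{-x}$ and $1-x\ge e^{-x/(1-x)}$, that $\psi_k=\tfrac{\log k}{k}\,(1+o(1))$. Dividing yields $z_k/\psi_k\to\log(1/\varphi)/\log 2\approx 0.6943$.

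It remains to prove the two quantitative bounds. For $z_k>\tfrac{\log k}{3k}$: the cases $k\le 4$ are the numerical inequalities $\psi_k>\tfrac{\log k}{3k}$; for $k\ge 5$ I would use $\mu_k\ge\varphi^{-p}$ with $p\ge\log_2 k-1$ to get $z_k\ge 1-\varphi^{p/(k-1)}=1-e^{-u}$, where $u=\tfrac{p\log(1/\varphi)}{k-1}$, and then $1-e^{-u}\ge u-u^2/2$ together with $u\le\tfrac14$ and $u\ge\tfrac{\log(1/\varphi)}{\log 2}\cdot\tfrac{\log k-\log 2}{k-1}$ gives $z_k\ge\tfrac25\cdot\tfrac{\log k}{k}$. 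For $\tfrac{z_k}{\psi_k}>\tfrac12$ (immediate for $k\le 4$): this is equivalent to $\mu_k(1-\psi_k/2)^{k-1}>1$, and the termwise power-series comparison $-\log(1-t/2)\le-\tfrac12\log(1-t)$ gives $(1-\psi_k/2)^{k-1}\ge(1-\psi_k)^{(k-1)/2}=\alpha_k^{1/2}$, reducing the claim to $\mu_k^2\alpha_k>1$; using $\mu_k^2\ge\varphi^{-2p}$, $\alpha_k=\varphi_k^{k-1}$, and $\log(1/\varphi_k)=\tfrac1k\log(1/\psi_k)$, this in turn reduces to $\psi_k\ge(\varphi^2)^{\lfloor\log_2 k\rfloor}$, which follows from $\psi_k>1/k$ (valid for $k\ge 4$, since $(1-1/k)^k\ge(3/4)^4>1/4$) once $k\ge 12$, the finitely many remaining $k$ being checked by hand.

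I expect the last step to be the main obstacle: the asymptotics leave only a constant-factor cushion ($\approx 0.69$ against $1/3$, and $\approx 0.69$ against $1/2$), so both inequalities must be made uniform in $k$ through explicit estimates on the implicitly defined $\psi_k$, with a short finite verification for the small values of $k$ where the cushion is thinnest. Everything else reduces to bookkeeping with \eqref{e:basic} and the closed form of $\mu_k$.
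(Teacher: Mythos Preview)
Your argument is correct and shares the paper's overall skeleton: the equality for $k\le 4$ and the upper bound $z_k\le\psi_k$ via $F_k(\varphi_k)=1/\alpha_k$ and Lemma~\ref{l:lb} are exactly as in the paper, and the limit is obtained the same way, by combining $\log\mu_k=p\log(1/\varphi)+O(1)$ with the standard estimate $\psi_k=(1+o(1))\tfrac{\log k}{k}$.

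The genuine difference is in the two uniform bounds. For $z_k/\psi_k>\tfrac12$ the paper bounds the ratio directly, via the chain \eqref{e:first}--\eqref{e:last}, and then verifies numerically that \eqref{e:first} exceeds $\tfrac12$ for $k\le 100$ and \eqref{e:last} does so for $k>100$. Your reduction through $(1-t/2)^2\ge 1-t$ to the inequality $\mu_k^2\alpha_k>1$, and then (using $\alpha_k=\psi_k^{(k-1)/k}$ and $\mu_k\ge\varphi^{-p}$) to $\psi_k\ge\varphi^{2\lfloor\log_2 k\rfloor}$, is a cleaner route: it replaces a check of a two-term analytic expression up to $k=100$ by the elementary inequality $\psi_k>1/k\ge\varphi^{2p}$, which in fact already holds from $k=8$ on, leaving only a handful of small $k$ to verify directly. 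Conversely, for $z_k>\tfrac{\log k}{3k}$ the paper's route is shorter than yours: once $z_k>\psi_k/2$ is established, the paper simply combines it with the one-line bound $\psi_k\ge\tfrac{2\log k}{3k}$ (valid for all $k\ge 2$), whereas you rederive a lower bound on $z_k$ from scratch via $1-e^{-u}\ge u-u^2/2$. Both orderings work; the paper's saves a paragraph.
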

\begin{proof}
	By the definitions of $z_k$, $\mu_k$, $\varphi_k$, $\alpha_k$, $\psi_k$ we have $z_2 = 1-\alpha_2=\psi_2$,
	$z_3=1-(\alpha_3)^{1/2} = 1-\varphi_3=\psi_3$ and $z_4 = 1-(\alpha_4)^{1/3} = 1-\varphi_4 = \psi_4$.
	
	By the definitions of $\varphi_k$ and $\alpha_k$, we have that $F_k(\varphi_k)=1/\alpha_k$.
	By Lemma \ref{l:lb},
	$$
	z_k = 1 - {\mu_k}^{1/(1-k)} \le 1-F_k(\varphi_k)^{1/(1-k)} = 1 - \left(\frac{1}{\alpha_k}\right)^{1/(1-k)} = 1- \alpha_k^{1/(k-1)} = 1-\varphi_k=\psi_k\;.
	$$
	
	Consider the function $(1-x)^k-x$ for which $\psi_k$ is a root in $(0,1)$. As this function is monotone decreasing in $(0,1)$, $\psi_k$ is its only root there.
	Since $(1-\log k/k)^k - \log k /k < 0$ for all $k \ge 3$, we have that
	$\psi_k < \log k/k$ for all $k \ge 3$. Notice also that for every $\eps \in (0,1)$,
	$(1-(1-\eps)\log k/k)^k - (1-\eps)\log k/k > 0$ for all sufficiently large $k$, thus
	$\psi_k =(1-o(1))\log k/k$. In fact, it is easily verified that $\epsilon=\frac{1}{3}$ works for all $k \ge 2$, hence $\psi_k \ge (2\log k)/(3k)$.
	
	Now suppose that $k=2^p+q$ where $p=\lfloor \log_2 k \rfloor$.
	Notice that since $\mu_k$ is increasing with $k$, we have that $z_k= 1 - {\mu_k}^{1/(1-k)}
	\ge 1 - {\mu_{2^p}}^{1/(1-k)} = 1-\varphi^{\lfloor \log_2 k \rfloor/(k-1)}$.
	
	Using the inequality $e^{-x} \le 1-x+x^2/2$ valid for all $x \ge 0$ we have
	\begin{align}
		\frac{z_k}{\psi_k} & \ge \frac{1- \varphi^{\lfloor \log_2 k \rfloor/(k-1)}}{\log k/k} \label{e:first}\\	
		& = \frac{1- e^{-\log(1/\varphi)\frac{\lfloor \log_2 k \rfloor}{k-1}}}{\log k/k} \nonumber\\
		& \ge \frac{1- e^{-\frac{\log(1/\varphi)}{\log 2}\frac{(\log k)-1}{k-1}}}{\log k/k} \nonumber\\
		& \ge \frac{\frac{\log(1/\varphi)}{\log 2}\frac{(\log k)-1}{k-1}-
		\frac{\log^2(1/\varphi)}{2\log^2 2}\frac{((\log k)-1)^2}{(k-1)^2}}{\log k/k} \nonumber\\
	    & \ge \frac{\frac{\log(1/\varphi)}{\log 2}((\log k) -1)-\frac{\log^2(1/\varphi)}{2\log^2 2}
	    	\frac{((\log k)-1)^2}{(k-1)}}{\log k}\;. \label{e:last}
	\end{align}
We immediately obtain from the last inequality that
$$
\liminf_{k \rightarrow \infty} \frac{z_k}{\psi_k} \ge \frac{\log \frac{1}{\varphi}}{\log 2}\;.
$$
To see that this is, in fact, a limit, just repeat the last series of inequalities by (i) reversing each inequality; (ii) using the lower bound $\psi_k \ge (1-o(1)\log k/k$; (iii) using the upper bound
$z_k= 1 - {\mu_k}^{1/(1-k)} \le 1 - {\mu_{2^{p+1}}}^{1/(1-k)} = 1-\varphi^{1+ \lfloor \log_2 k \rfloor/(k-1)}$;
(iv)  apply the inequality $e^{-x} \ge 1-x$.

Finally, it is easily verified that \eqref{e:first} is larger than $\frac{1}{2}$ for $k \le 100$ and
\eqref{e:last} is larger than $\frac{1}{2}$ for $k > 100$. Thus, $z_k/\psi_k > \frac{1}{2}$
and $z_k > \psi_k/2 \ge (\log k)/(3k)$.
\end{proof}

\begin{proof}[Proof of Theorems \ref{t:clg-345} and \ref{t:clg}]
	Let $\F \subseteq 2^{[n]}$, $\F \neq \{\emptyset\}$ be a $(1-\eps)$-approximate $k$-union closed set system, where $0 \le \eps <\frac{1}{2}$.
	Let $p_i$ be the fraction of sets in $\F$ that do not contain $i$ and let $p = \min_{i \in [n]} p_i$.
	Let $A_1,\ldots,A_k$ be a $k$-tuple of sets of $\F$, where $A_j$ is chosen uniformly and independently of the other sets. By Lemma \ref{l:main} we obtain:
	$$
	H(\cup_{j=1}^k A_j) \ge \frac{p^{k-1} \mu_k}{k}\left(\sum_{j=1}^k H(A_j)\right) = p^{k-1}\mu_k \log |\F|\;.
	$$
	As in \cite{CL-2022}, we show that $H(\cup_{j=1}^k A_j)$ cannot be much larger than $\log |\F|$.
	Let $I$ be the indicator for the event $\cup_{j=1}^k A_j \in \F$ where by assumption
	$\Pr[I = 1] \ge 1-\epsilon$. We have
	$$
	H(\cup_{j=1}^k A_j) \le H(\cup_{j=1}^k A_j, I) = H(I) + H(\cup_{j=1}^k A_j \,|\, I = 0)
	\Pr[I = 0] + H(\cup_{j=1}^k A_j \,|\, I=1)Pr[I = 1]\;.
	$$
	We bound the terms in the last inequality. Since $I \in \{0,1\}$, and $\Pr[I = 0] \le \eps  < \frac{1}{2}$, we have $H(I) \le h(\eps) \le  2\eps \log(1/\eps)$.
	Also note that $H(\cup_{j=1}^k A_j\,|\, I = 0) \le H(A_1,A_2,\ldots,A_k \,|\,I = 0)\le k \log |F|$.
	Finally, notice that $(\cup_{j=1}^k A_j \,|\, I= 1)$ is a distribution supported on $\F$ and so
	$H(\cup_{j=1}^k A_j \,|\, I=1) \le \log |\F|$. We therefore have
	$$
	 p^{k-1}\mu_k \log |\F| \le H(\cup_{j=1}^k A_j) \le 2\eps \log(1/\eps)+(1+k\epsilon)\log |\F|
	$$
	from which we immediately obtain
	$$
	1-p \ge 1 - {\mu_k}^{1/(1-k)} - \left(k\eps + \frac{2\eps \log(1/\eps)}{\log |\F|}\right)^{1/(k-1)}
	= z_k - \left(k\eps + \frac{2\eps \log(1/\eps)}{\log |\F|}\right)^{1/(k-1)}\;.
	$$
	Theorems \ref{t:clg-345} and \ref{t:clg} now follow from Proposition \ref{prop:zk}.
\end{proof}
Finally, by Lemma \ref{l:6}, Conjecture \ref{conj:real-roots} implies Conjecture \ref{conj:fk},
and Conjecture \ref{conj:fk} implies the validity of Corollary \ref{coro:234} for all $k$ (not just $k=2,3,4$),
which in turn, means that we can define $\mu_k=1/\alpha_k$ for all $k$ (not just $k=2,3,4$), which implies
Conjecture \ref{conj:clg}. Stated directly: if $p_k(x)$ has at most two real roots in $(0,1)$, then
Conjecture \ref{conj:clg} holds.

\appendix

\section{The real root pattern of the derivatives of $p_4$}\label{appendix:p4}

we prove that the number of real roots of the derivatives of $p_4$ follows the sequence
$$
(3,2,3,2,1,2,3,2,1,2,3,2,1,2,1)
$$
where the $i$'th coordinate (starting at $i=0$) is the number of real roots of ${p_4}^{(i)}(x)$.  For referential convenience, the derivatives of interest are:
\begin{align*}
	{p_4}^{(1)} & =
	{\scriptstyle 10+40x+105x^2+4(-496\alpha_4+40)x^3+220x^4+240x^5+175x^6+8(-496\alpha_4+20)x^7+90x^8+40x^9+55x^{10}-192\alpha_4x^{11}-15x^{14}};\\
	{p_4}^{(2)} & =
	{\scriptstyle
	 40+210x+12(-496\alpha_4+40)x^2+880x^3+1200x^4+1050x^5+56(-496\alpha_4+20)x^6+720x^7+360x^8+550x^9-2112
	\alpha_4x^{10}-210x^{13}};\\
	{p_4}^{(4)} & =
	{\scriptstyle
	-11904\alpha_4+960+5280x+14400x^2+21000x^3+1680(-496\alpha_4+20)x^4+30240x^5+20160x^6+39600x^7-
	190080\alpha_4x^8-32760x^{11}};\\
	{p_4}^{(5)} & =
	{\scriptstyle
    5280+28800x+63000x^2+6720(-496\alpha_4+20)x^3+151200x^4+120960x^5+277200x^6-1520640\alpha_4x^7
	-360360x^{10}};\\
	{p_4}^{(6)} & =
	{\scriptstyle
	28800+126000x+20160(-496\alpha_4+20)x^2+604800x^3+604800x^4+1663200x^5-10644480\alpha_4 x^6-3603600 x^9};\\
	{p_4}^{(8)} & =
	{\scriptstyle
	-19998720\alpha_4+806400+3628800x+7257600x^2+33264000x^3-319334400\alpha_4x^4-259459200x^7};\\
	{p_4}^{(9)} & =
	{\scriptstyle
	3628800+14515200x+99792000x^2-1277337600 \alpha_4 x^3-1816214400x^6};\\
	{p_4}^{(10)} & =
	{\scriptstyle
	14515200 + 199584000x - 3832012800\alpha_4 x^2-10897286400 x^5} ;\\
	{p_4}^{(12)} & =
	{\scriptstyle
	-7664025600\alpha_4-217945728000x^3} ;
\end{align*}

Clearly ${p_4}^{(13)}(x)$ is a parabola with a double root at $x=0$ and
${p_4}^{(14)}(x)$ is linear, so has a single root.
Observing the cubic ${p_4}^{(12)}(x)$, we see that it has one real root.
This implies that ${p_4}^{(10)}(x)$ has at most three real roots. Indeed, it has three since
$\alpha_4 \approx 0.3802$ and
\begin{alignat*}{5}
	&{p_4}^{(10)}\left(-0.2\right) & ~= &~ -\tfrac{2739308544}{125}-153280512\alpha_4 & ~< 0\,,\\
	&{p_4}^{(10)}\left(0\right)    & ~= &~ 14515200               & > 0\,.
\end{alignat*}
Let $\gamma_{10,1} \in (-\infty,-0.2)$, $\gamma_{10,2} \in (-0.2,0)$, $\gamma_{10,3} \in (0,\infty)$ be the real roots of
${p_4}^{(10)}(x)$.

As ${p_4}^{(9)}(x)$ has even degree and negative leading coefficient, it must be that
$\gamma_{10,2}$ is a local minimum of ${p_4}^{(9)}(x)$.
To prove that ${p_4}^{(9)}(x)$ has at most two real roots, we show that
${p_4}^{(9)}(\gamma_{10,2}) > 0$.
Indeed, ${p_4}^{(9)}(0) = 3628800$.
Now, for every $x \in [-0.2,0]$ we have
\begin{align*}
	{p_4}^{(9)}(x) - {p_4}^{(9)}(0) & = 14515200x+99792000x^2-1277337600 \alpha_4 x^3-1816214400x^6\\
	& \ge  14515200\left(-\tfrac{1}{5}\right)-1816214400\left(\tfrac{1}{5^6}\right)\\
	&  > -3628800\;.
\end{align*}
As $\gamma_{10,2} \in (-0.2,0)$, we have that ${p_4}^{(9)}(\gamma_{10,2}) > 0$.
We have shown that ${p_4}^{(9)}(x)$ has at most two real roots. Indeed, it has two since
\begin{alignat*}{5}
	&{p_4}^{(9)}\left(0\right)    & ~= &~ 3628800               & > 0\,,\\
	&{p_4}^{(9)}\left(0.4\right)    & ~= &~ \tfrac{11226491136}{625}-\tfrac{408748032}{5}\alpha_4 & ~< 0\,.
\end{alignat*}
Let $\gamma_{9,1} \in (-\infty,0)$, $\gamma_{9,2} \in (0,0.4)$ be the real roots of ${p_4}^{(9)}(x)$.

As ${p_4}^{(8)}(x)$ has odd degree and negative leading coefficient, it must be that
$\gamma_{9,2}$ is a local maximum of ${p_4}^{(8)}(x)$.
To prove that ${p_4}^{(8)}(x)$ has at most one real root, we show that
${p_4}^{(8)}(\gamma_{9,2}) < 0$.
Indeed, ${p_4}^{(8)}(0) = -19998720\alpha_4+806400  < -6000000$.
Now, for every $x \in [0,0.4]$ we have
\begin{align*}
	{p_4}^{(8)}(x) - {p_4}^{(8)}(0) & = 3628800x+7257600x^2+33264000x^3-319334400\alpha_4x^4-259459200x^7\\
	& \le  3628800\left(\tfrac{2}{5}\right)+7257600\left(\tfrac{4}{25}\right)+33264000\left(\tfrac{8}{125}\right)\\
	&  < 5000000\;.
\end{align*}
As $\gamma_{9,2} \in (0,0.4)$, we have that ${p_4}^{(8)}(\gamma_{9,2}) < 0$.
We have shown that ${p_4}^{(8)}(x)$ has at most one real root.

As ${p_4}^{(8)}(x)$ has at most one real root, it follows that ${p_4}^{(6)}(x)$
has at most three real roots. Indeed, it has three since
\begin{alignat*}{5}
	&{p_4}^{(6)}\left(-0.15\right) & ~= &~ \tfrac{21901848684147}{1280000000}-\tfrac{2813835591}{12500}\alpha_4 &~ < 0\,,\\
	&{p_4}^{(6)}\left(0\right)    & ~= &~ 28800               & > 0\,.
\end{alignat*}
Let $\gamma_{6,1} \in (-\infty,-0.2)$, $\gamma_{6,2} \in (-0.15,0)$, $\gamma_{6,3} \in (0,\infty)$ be the real roots of ${p_4}^{(6)}(x)$.

As ${p_4}^{(5)}(x)$ has even degree and negative leading coefficient, it must be that
$\gamma_{6,2}$ is a local minimum of ${p_4}^{(5)}(x)$.
To prove that ${p_4}^{(5)}(x)$ has at most two real roots, we show that
${p_4}^{(5)}(\gamma_{6,2}) > 0$.
Indeed, ${p_4}^{(5)}(0) = 5280$.
Now, for every $x \in [-0.15,0]$ we have
\begin{align*}
	{p_4}^{(5)}(x) - {p_4}^{(5)}(0) & =
	28800x+63000x^2+6720(-496\alpha_4+20)x^3+151200x^4+120960x^5\\
	& \quad +277200x^6-1520640\alpha_4x^7-360360x^{10}\\
	& \ge
	28800\left(-\tfrac{3}{20}\right)+120960\left(-\tfrac{3}{20}\right)^5
	-360360\left(-\tfrac{3}{20}\right)^{10}\\
	&  > -4400\;.
\end{align*}
As $\gamma_{6,2} \in (-0.15,0)$, we have that ${p_4}^{(5)}(\gamma_{6,2}) > 0$.
We have shown that ${p_4}^{(5)}(x)$ has at most two real roots.
Indeed, it has two since 
\begin{alignat*}{5}
	&{p_4}^{(5)}\left(0\right)    & ~= &~ 5280               & > 0\,,\\
	&{p_4}^{(5)}\left(0.25\right)    & ~= &~ \tfrac{2528848395}{131072}-\tfrac{834765}{16}\alpha_4 & ~< 0\,.
\end{alignat*}
Let $\gamma_{5,1} \in (-\infty,0)$, $\gamma_{5,2} \in (0,0.25)$ be the real roots of ${p_4}^{(5)}(x)$.

As ${p_4}^{(4)}(x)$ has odd degree and negative leading coefficient, it must be that
$\gamma_{5,2}$ is its local maximum.
To prove that ${p_4}^{(4)}(x)$ has at most one real root, we show that
${p_4}^{(4)}(\gamma_{5,2}) < 0$.
Indeed, ${p_4}^{(4)}(0) = -11904\alpha_4+960 < -3565$.
Now, for every $x \in [0,0.25]$ we have
\begin{align*}
	{p_4}^{(4)}(x) - {p_4}^{(4)}(0) & = 5280x+14400x^2+21000x^3+1680(-496\alpha_4+20)x^4+30240x^5+20160x^6\\
	&\quad +39600x^7-190080\alpha_4x^8-32760x^{11}\\
	& \le
	5280x+14400x^2+21000x^3+30240x^5+20160x^6+39600x^7\\
	&  < 5280\left(\tfrac{1}{4}\right)+14400\left(\tfrac{1}{4}\right)^2+21000\left(\tfrac{1}{4}\right)^3+30240\left(\tfrac{1}{4}\right)^5+20160\left(\tfrac{1}{4}\right)^6+39600\left(\tfrac{1}{4}\right)^7\\
	& < 2600\;.
\end{align*}
As $\gamma_{5,2} \in (0,0.25)$, we have that ${p_4}^{(4)}(\gamma_{5,2}) < 0$.
Hence, ${p_4}^{(4)}(x)$ has at most one real root.

As ${p_4}^{(4)}(x)$ has at most one real root, it follows that ${p_4}^{(2)}(x)$
has at most three real roots. Indeed, it has three since
\begin{alignat*}{5}
	&{p_4}^{(2)}\left(-0.2\right) & ~= &~ \tfrac{2882593792}{244140625}-\tfrac{2342362112}{9765625}\alpha_4 &~ < 0\,,\\
	&{p_4}^{(2)}\left(0\right)    & ~= &~ 40               & > 0\,.
\end{alignat*}
Let $\gamma_{2,1} \in (-\infty,-0.2)$, $\gamma_{2,2} \in (-0.2,0)$, $\gamma_{2,3} \in (0,\infty)$ be the real roots of ${p_4}^{(2)}(x)$.

As ${p_4}^{(1)}(x)$ has even degree and negative leading coefficient, it must be that
$\gamma_{2,2}$ is a local minimum of ${p_4}^{(1)}(x)$.
To prove that ${p_4}^{(1)}(x)$ has at most two real roots, we show that
${p_4}^{(1)}(\gamma_{2,2}) > 0$.
Indeed, ${p_4}^{(1)}(0) = 10$.
Now, for every $x \in [-0.2,0]$ we have
\begin{align*}
	{p_4}^{(1)}(x) - {p_4}^{(1)}(0) & =
	40x+105x^2+4(-496\alpha_4+40)x^3+220x^4+240x^5+175x^6\\
	& \quad +8(-496\alpha_4+20)x^7+90x^8+40x^9+55x^{10}-192\alpha_4x^{11}-15x^{14}\\
	& \ge
	40x+240x^5+40x^9-15x^{14}\\
	&  \ge
	40\left(-\tfrac{1}{5}\right)+240\left(-\tfrac{1}{5}\right)^5+40\left(-\tfrac{1}{5}\right)^9-15\left(-\tfrac{1}{5}\right)^{14}\\
	& > -9\;.
\end{align*}
As $\gamma_{2,2} \in (-0.2,0)$, we have that ${p_4}^{(1)}(\gamma_{2,2}) > 0$.
We have shown that ${p_4}^{(1)}(x)$ has at most two real roots.
Hence $p_4(x)$ has at most three real roots. By the comment after Lemma \ref{l:6}, it must have precisely three. 
\qed
\end{document}